\newtheorem{theorem}{Theorem}[section]
\newtheorem{proposition}[theorem]{Proposition}
\newtheorem{lemma}[theorem]{Lemma}
\newtheorem{remark}{Remark}[section]
\newtheorem{theo}{Theorem}
\theoremstyle{definition}
\newtheorem{definition}[theorem]{Definition}
\newcommand{\R}{\mathbb{R}}
\newcommand{\C}{\mathbb{C}}
\newcommand{\N}{\mathbb{N}}
\newcommand{\Z}{\mathbb{Z}}
\newcommand{\T}{\mathbb{T}}
\newcommand{\lcurl}{\lesssim}
\begin{document}

\title
{On the use of normal forms in the propagation of random waves}
\author{Anne-Sophie de Suzzoni\footnote{Universit\'e de Cergy-Pontoise,  Cergy-Pontoise, F-95000,UMR 8088 du CNRS }}

\maketitle

\begin{abstract} We consider the evolution of the correlations between the Fourier coeficients of a solution of the Kamdostev-Petviavshvili II equation when these coefficients are initially independent random variables. We use the structure of normal forms of the equation to prove that those correlations remain small until times of order $\varepsilon^{-5/3}$ or $\varepsilon^{-2}$ depending on the quantity considered.\end{abstract} 

\tableofcontents

\section{Introduction}

We consider the Kadomtsev-Petviashvili II equation with a small non linearity on the torus of dimension 2, $\T^2$, that is
\begin{equation}\label{introkp}
\partial_x \Big( \partial_t u + \partial_x^3 u + \varepsilon u\partial_x u \Big) + \partial_y^2 u = 0
\end{equation}
with $\varepsilon \ll 1$, when the initial datum $u_0 := u(t=0)$ is a random variable with values in $H^s$.

The main aim of this paper is to extend the result of \cite{dSTont} to longer times and lower regularity on KP-II using a normal form version of this equation. But we require more integrability of the initial datum in the probability space. Normal forms have often been used to obtain better time scales in function of the size of the non linearity of a PDE (or equivalently, the size of the initial datum), see, for instance, \cite{Dexi,Snormalforms}.

The KP-II equation models shallow water waves in the approximation of long wave length in the $x$ direction, and when the surface tension is weak. It is known to be well-posed in $H^s$, $s\geq 0$, see \cite{Bont}.

Motivated by the notion of statistical equilibrium in the Physics literature, see \cite{FZwea,ZDPone}, and also by \cite{MMcLTaon}, we assume that the Fourier coefficients of the initial datum $u_0$ are independent from each other and that their laws are invariant by multiplication by $e^{i\theta}$, for all $\theta$. We recall that the statistical equilibrium of a system modelled by the solution $u(t)$ of a Hamiltonian equation is reached when the expectations $E(|u_n(t)|^2)$ of the amplitudes to the square of the Fourier coefficients of $u(t)$ do not depend on time. This notion is introduced in \cite{FZwea}.We are interested in the persistence of the decorrelation between the Fourier coefficients of the solution. Indeed, to compute the random variable corresponding to statistical equilibrium, the expectations of products of the Fourier coefficients are approached by the products of the expectations. We want to know in which sense this approximation is true. A reasonable choice of quantities to study this decorrelation would then be the moments of the solution $u$, that is, writing 
$$
u = \sum_{n\in \Z^2} u_n(t) e^{i(n_xx + n_y y)}\; ,
$$
these are defined for all $p \in \N$ and all $(n_1,\hdots,n_p) \in \Z^p$ as 
$$
E\Big( \prod_{i=1}^p u_{n_i}(t)\Big)
$$
where $E$ is the expectation. However, the bigger $p$ is, the more complex the computation seems to be, thus we focus on $p=2$ and $p=3$. In \cite{dSTont}, the authors considered $p=2$, but we think that adding the moments of order 3 (i.e. when $p=3$) gives some insight regarding what happens when $p$ is not fixed, as the behaviour of the moments of order $p$ is partially dictated by whether $p$ is odd or even. We comment this in the last subsection of this paper.

What we do is that we expand
$$
E\Big( u_n \overline{u_m}\Big) 
$$
at order $3$ in $\varepsilon$ and  
$$
E\Big( u_nu_mu_p\Big)
$$
at order 2 in $\varepsilon$. We keep track of the dependence in time.

We denote by $i\omega_k$ the eigenvalue of $-\partial_x^3 - \partial_x^{-1}\partial_y^2$ associated to the wavelength $k=(k_1,k_2)$, that is 
$$
\omega_k = k_1^3 - \frac{k_2^2}{k_1}
$$
and by $\Delta_n^{k,l}$ the difference $\omega_k+\omega_l - \omega_n$ when $k+l=n$. This quantity $\Delta_n^{k,l}$ is the frequency of the three waves interaction $k,l \rightarrow k+l$.

\begin{theo}\label{th-mainmomtwo} Fix $s>1$. Let $(g_n)_{n\in \N^*\times \Z}$ a sequence of independent identically distributed random variables whose law is invariant by multiplication by $e^{i\theta}$. Let $(\lambda_n)_{n\in \N^*\times \Z}$ be a sequence of complex numbers. Set
$$
\lambda_{-n} = \overline{\lambda_n} \mbox{ and } g_{-n} = \overline{g_n}
$$
such that 
$$
u_0 = \sum_{n\in \Z^* \times \Z} g_n \lambda_n e^{i(n_x x+ n_y y)}
$$
is real-valued. Assume that $u_0$ belongs to $L^\infty (\Omega, H^s(\T^2))$ and that its norm is fixed at 1. Call
$$
u(t) = \sum_n u_n(t) e^{i(n_x x+ n_y y)}
$$
the solution of KP-II with initial datum $u_0$. 

The expansion in $\varepsilon$ of $E( u_n \overline{u_m})$ is given by
$$
E( u_n \overline{u_m}) = \delta_n^m |\lambda_n|^2 + \varepsilon^2 F_{n,m}(t) + \varepsilon^4 \widetilde R(n,m,t,\varepsilon)
$$
with 
\begin{eqnarray*}
F_{n,n}(t) &= &  - n_1 E(|g_n|^2)^2 \sum_{k +l = n} \frac{\cos(\Delta_n^{k,l}t)-1}{(\Delta_n^{k,l})^2} \Big( k_1 |\lambda_n|^2|\lambda_l|^2 + l_1 |\lambda_n|^2|\lambda_k|^2 - n_1 |\lambda_k|^2|\lambda_l|^2\Big)  \\
  & & -n_1 (E(|g_n|^4)-2 E(|g_n|^2)^2) \Big( 2n_1 \frac{\cos(\Delta_n^{-n,2n}t)-1}{(\Delta_n^{-n,2n})^2} |\lambda_n|^4 - \\
 & & \delta_{n/2 \in \N^2}\frac{n_1}{2} \frac{\cos(\Delta_n^{n/2,n/2}t)-1}{(\Delta_n^{n/2,n/2})^2} |\lambda_{n/2}|^4\Big) \; .
\end{eqnarray*} 
and $F_{n,m}(t) = 0$ if $n\neq m$.  Moreover,
$$
\sum_{n,m} \sqrt{|n_1m_1|} |nm|^{s}|F_{n,m}(t)| 
$$
is bounded uniformly in time and there exists $T_1 > 0$ and $\varepsilon_1 >0$, such that for all $\varepsilon \in [0,\varepsilon_1[$ and all $t\in [-T,T]$, with $T = T_1 \varepsilon^{-5/3}$,
$$
|\widetilde R(\varepsilon,n,m,t)| \leq C_s (\min (|n|,|m|))^{-s}|t| (1+|t|)^{7/5}\; .
$$
\end{theo}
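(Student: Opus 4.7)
My strategy is to work in the interaction picture of KP-II, where the equation becomes the oscillatory quadratic system
$$
\partial_t v_n = -\tfrac{i\varepsilon n_1}{2}\sum_{k+l=n} e^{i\Delta_n^{k,l}t} v_k v_l,\qquad v_n(t):=e^{-i\omega_n t}u_n(t),
$$
and to exploit the normal-form mechanism that, on the non-resonant sector, trades the $\varepsilon$ in front of the quadratic term for an $\varepsilon^2$ in front of a cubic term at the cost of a small divisor $\Delta_n^{k,l}$. I would iterate Duhamel's formula from $v^{(0)}_n=g_n\lambda_n$ and expand $v_n = v^{(0)}_n+\varepsilon v^{(1)}_n+\varepsilon^2 v^{(2)}_n+\varepsilon^3 v^{(3)}_n+R_n$, where each $v^{(j)}_n$ is $(j{+}1)$-linear in the Gaussians $g_\bullet$. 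The invariance of each $g_n$ under $g_n\mapsto e^{i\theta}g_n$ forces every expectation of an odd-degree monomial in the $g_\bullet$'s to vanish, so the $\varepsilon^1$ and $\varepsilon^3$ contributions to $E(u_n\overline{u_m})=e^{i(\omega_n-\omega_m)t}E(v_n\overline{v_m})$ automatically drop out, leaving the structure $\delta_n^m|\lambda_n|^2+\varepsilon^2 F_{n,m}(t)+\varepsilon^4\widetilde R$ announced in the theorem.

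The $\varepsilon^2$ coefficient is
$$
F_{n,m}(t)=e^{i(\omega_n-\omega_m)t}\bigl[E(v_n^{(2)}\overline{v_m^{(0)}})+E(v_n^{(0)}\overline{v_m^{(2)}})+E(v_n^{(1)}\overline{v_m^{(1)}})\bigr],
$$
which I would evaluate through the pairing constraints that independence plus rotational invariance impose on products of $g_k$'s. The elementary identity $\int_0^t e^{i\Delta s}\,ds=(e^{i\Delta t}-1)/(i\Delta)$ combined with its conjugate reproduces the factor $(\cos(\Delta_n^{k,l}t)-1)/(\Delta_n^{k,l})^2$. Only $n=m$ survives (so $F_{n,m}=0$ off-diagonal), and the diagonal splits into a Gaussian part weighted by $E(|g_n|^2)^2$ (two independent pairings among four indices) plus an Isserlis correction weighted by $E(|g_n|^4)-2E(|g_n|^2)^2$, arising from the degeneracies $k=l=n/2$ and $(k,l)=(-n,2n)$ in which two Gaussian factors coincide. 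Uniform summability of $\sum_n|n_1||n|^{2s}|F_{n,n}(t)|$ then follows from $|\cos(\Delta t)-1|/\Delta^2\leq 2/\Delta^2$, the $H^s$-normalization $\sum_n|n|^{2s}|\lambda_n|^2\leq 1$, and the assumption $s>1$.

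The heart of the work lies in estimating $\varepsilon^4\widetilde R$, which collects the $\varepsilon^4$-Picard terms $E(v_n^{(3)}\overline{v_m^{(1)}})+E(v_n^{(2)}\overline{v_m^{(2)}})+\dots$ together with the contributions involving the Picard remainder $R_n$, on the long window $|t|\leq T_1\varepsilon^{-5/3}$. For every oscillatory integral appearing there I would perform a resonance/non-resonance split at a scale $\delta$: on the non-resonant region $|\Delta_n^{k,l}|>\delta$ a single normal-form integration by parts replaces $e^{i\Delta s}/(i\Delta)$ by a boundary term plus $\partial_s v_\bullet$, which thanks to the equation carries a new power of $\varepsilon$ and a fresh quadratic convolution; on the resonant region $|\Delta_n^{k,l}|\leq\delta$ I would estimate directly using the $L^\infty(\Omega,H^s)$-control of $u_0$, the multilinear estimates of \cite{Bont}, and a counting bound on near-resonant triples derived from the explicit dispersion $\omega_n=n_1^3-n_2^2/n_1$. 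Optimising $\delta$ against $|t|$ and $\varepsilon$ produces both the polynomial growth $|t|(1+|t|)^{7/5}$ and the threshold $T_1\varepsilon^{-5/3}$.

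The principal obstacle is precisely this last step: the multilinear bounds must be propagated for the higher Picard iterates and for $R_n$ itself across a time-scale far beyond the deterministic well-posedness scale, and the small-divisor sums produced by the normal-form integrations by parts must remain summable in $H^s$. The gains from successive normal-form steps have to compound cleanly against the loss per integration by parts and against the growth of the near-resonant level sets $\#\{k+l=n:|\Delta_n^{k,l}|<\delta\}$; the tuning of these three competing effects is what ultimately selects the exponents $5/3$ and $7/5$.
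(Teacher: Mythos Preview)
Your treatment of the $\varepsilon^0$ and $\varepsilon^2$ terms is close to the paper's: the interaction-picture expansion, the use of rotational invariance to kill odd orders, the pairing argument yielding the diagonal $F_{n,n}(t)$ with its $(\cos(\Delta t)-1)/\Delta^2$ factor and the Isserlis-type correction --- all of that matches Section~\ref{sub-momtwo} and Proposition in Section~\ref{sub-estdif}.

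The gap is in the remainder. Your plan hinges on a resonance/non-resonance split at a threshold $\delta$, with counting bounds on near-resonant triples $\#\{k+l=n:|\Delta_n^{k,l}|<\delta\}$, and on optimising $\delta$ against $|t|$ and $\varepsilon$ to extract the exponents $5/3$ and $7/5$. But for KP-II there is \emph{no} resonant region: the paper records (and uses throughout) the uniform lower bound $|\Delta_n^{k,l}|\geq 3|n_1k_1l_1|$, so every three-wave interaction is non-resonant and the normal form $v=u+\varepsilon S(u,u)$ is a globally defined, bounded bilinear map $H^s\times H^s\to H^s$ for $s>1/2$ with no small-divisor loss whatsoever. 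There is nothing to split and nothing to optimise; your proposed mechanism for producing the exponents does not exist here.

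What the paper actually does for $\widetilde R$ is this: the normal form reduces KP-II to $\partial_t v-Lv=\varepsilon^2 F(u,u,u)$ with $F$ a bounded trilinear map on $H^s$, $s>1$; the third-order piece $d$ of $u$ is recovered from the third-order piece $w$ of $v$ via a local inversion of $\Lambda_\varepsilon(d)=d+2\varepsilon S(a,d)+\cdots+\varepsilon^4 S(d,d)$, and a contraction argument in $L^\infty(\Omega,H^s)$ on $[-T,T]$ with $T\sim\varepsilon^{-\alpha}$ gives $\|d(t)\|\lesssim(1+|t|)^{1+\beta}$, $\beta=1-1/\alpha$. The choice $\alpha=5/3$ then makes the table of $\varepsilon^j\|a\|^{\cdot}\|b\|^{\cdot}\|c\|^{\cdot}\|d\|^{\cdot}$ terms in the remainder all collapse to $(1+|t|)^{7/5}$ after using $\varepsilon(1+|t|)^{3/5}\lesssim 1$. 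So the exponents come from balancing the growth of $c$ (linear in $t$) and $d$ (power $1+\beta$) against the powers of $\varepsilon$ in the Duhamel contraction, not from any near-resonance analysis. Your worry that one must ``propagate multilinear bounds far beyond the deterministic well-posedness scale'' is also misplaced: because $u_0\in L^\infty(\Omega,H^s)$, the contraction is essentially deterministic in $H^s$, $s>1$, and the long time-scale is bought entirely by the $\varepsilon^2$ gained from the (global, lossless) normal form.
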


Notice that when $n\neq m$
$$
E\Big( u_n(t) \overline{u_m(t)} \Big)
$$
is null up to third order, which gives some credit to the persistence of the decorrelation. 

The term $\varepsilon^4 \widetilde R$ is small as long as the time is a $o(\varepsilon^{-5/3})$, and this is how we get longer times than \cite{dSTont}, in which the time scale is $o(\varepsilon^{-1}$). But we can notice that the bound on the term of second order in $\varepsilon$, $\varepsilon^2 F_{n,m}(t)$ is uniform in time, hence this term is bounded by $\varepsilon^2$. For the remainder to be smaller than this term, and make the expansion in $\varepsilon$ an actual expansion for the times we consider, these times have to be at most $o(\varepsilon^{-5/6})$. We can also remark that, according to \cite{dSTont}, it appears that in the case of an equation presenting resonances within the three waves interaction, that is, when $\Delta_n^{k,l}$ can be $0$, KP-I for instance, the expansion leads to terms of the type $\varepsilon^n|t|^n$ as long as $t$ is of order less than $\varepsilon^{-1}$, which makes it an actual expansion for the natural time scale. This gives estimates explaining why the non resonant terms are said to be negligible in the Physics literature.

Besides, as we use a contraction argument we get the same regularity on the solution $u$ and the initial datum $u_0$ which enables us to assume that $s>1$ instead of $s>2$. However, we have to assume, in order to perform the contraction argument, that the initial datum is more integrable than in \cite{dSTont}, but it does not seem to contradict the assumptions made in the Physics literature.

In order to prove the theorem, we expand $u$ into its three first Picard interactions
$$
u= a + \varepsilon b + \varepsilon^2 c + \varepsilon^3 d(\varepsilon)
$$
where $a$ is the solution of the linear equation
$$
\partial_x (\partial_t a + \partial_x^3 a ) + \partial_y a  = 0
$$
with initial datum $u_0$, $b$ is the solution of 
$$
\partial_x (\partial_t b + \partial_x^3 b + a\partial_x a) + \partial_y b  = 0 
$$
with initial datum $0$, $c$ is the solution of
$$
\partial_x ( \partial_t c + \partial_x^3 c + \partial_x (ab)) + \partial_y^2 c =0
$$
with initial datum $0$ and $d$ is what is left.

The structure of $F_{n,m}(t)$ is derived from computations using the formulae giving $a$ and $b$. The estimates on the remainder $\widetilde R(n,m,t,\varepsilon)$ require to use normal forms. As in \cite{Tlon}, we transform the equation on $u$ into an equation on $v = u+\varepsilon S(u,u)$ of the form
$$
v_t + L v = \varepsilon^2 F(u,u,u)
$$
where $L$ is a linear map and $F$ and $S$ are multi linear maps. We estimate $d$ by bounding the term of order 3 in $v$.

The gain on time comes from bounds on the source term of the equation using normal forms that are better (of size $\varepsilon (1+|t|)$) than the one involved in the equation solved by $d$, which is of size $1+|t|$.

Nevertheless, the moments of order 3 contradicts the persistence of the decorrelation as it should be null if the considered Fourier coefficients $u_n$, $u_m$ and $u_p$ are independent. But, regarding their expansion, we get the following result.

\begin{theo}\label{th-mainmomthree}
The expansion in $\varepsilon$ of $E( u_nu_mu_p)$ is given by
$$
E( u_nu_mu_p)  = \varepsilon F_{n,m,p}(t) + \varepsilon^3 \widetilde R(n,m,p,t,\varepsilon)
$$
with
\begin{eqnarray*}
F_{n,m,p}(t) & = & -\frac{1-e^{i(\omega_n + \omega_m + \omega_p)t}}{\omega_n + \omega_m + \omega_p} \left( E(|g_n|^2) \Big( n_1 |\lambda_m|^2|\lambda_p|^2 + m_1 |\lambda_p|^2|\lambda_n|^2 + p_1 |\lambda_n|^2|\lambda_m|^2 \Big)+ \right. \\
 & & \left. (E(|g_n|^4 - 2 E(|g_n|^2)^2) \Big( \delta_m^p m_1 |\lambda_m|^4 + \delta_p^n p_1 |\lambda_p|^4 + \delta_n^m n_1 |\lambda_n|^4 \Big) \right)\; .
\end{eqnarray*}
when $n+m+p=0$ and $F_{n,m,p}(t) = 0$ otherwise. Moreover,
$$
\sum_{n,m,p} \sqrt{|n_1m_1p_1|}|nmp|^{s}|F_{n,m,p}(t)| 
$$
is bounded uniformly in time and there exist $\varepsilon_1>0$ and $T_1>0$ such that for all $\varepsilon < \varepsilon_1$ and $t \in [-T,T]$, with $T=T_1\varepsilon^{-2}$,
$$
|\widetilde R(\varepsilon, n,m,p,t) |\leq C_s (\min(|nm|,|mp|,|pn|))^{-s} |t|(1+|t|) \; .
$$\end{theo}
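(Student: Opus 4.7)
The plan is to reuse the Picard expansion $u = a + \varepsilon b + \varepsilon^2 c + \varepsilon^3 d(\varepsilon)$ from the proof of Theorem \ref{th-mainmomtwo}, expand $u_n u_m u_p$ by multilinearity and collect terms by their total $\varepsilon$-degree. The main tool is a parity rule inherited from the rotation invariance of the law of each $g_n$: extending by $g_{-n}=\overline{g_n}$ and rotating $g_k\mapsto e^{i\theta}g_k$ for positive indices sends $g_k \mapsto e^{i\theta\,\mathrm{sgn}(k)}g_k$, so $E(g_{k_1}\cdots g_{k_r}) = e^{i\theta\sum_j \mathrm{sgn}(k_j)} E(g_{k_1}\cdots g_{k_r})$ for every $\theta$, forcing the expectation to vanish unless the total charge is zero. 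Since $a$, $b$, $c$ are respectively linear, bilinear and cubic in the $g$'s, the zeroth-order term $E(a_n a_m a_p)$ carries three $g$'s, and the $\varepsilon^2$ terms $E(c_n a_m a_p)$, $E(b_n b_m a_p)$ and their permutations carry five; both have odd charge and vanish identically.

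The nontrivial main term at order $\varepsilon$ is computed from the explicit Duhamel formula
$$b_n(t) = -\frac{n_1}{2}\, e^{i\omega_n t}\sum_{k+l=n}\lambda_k\lambda_l\, g_k g_l \,\frac{e^{i\Delta_n^{k,l}t}-1}{\Delta_n^{k,l}}.$$
Multiplied by $a_m a_p$ and taking expectation, the four-point moment $E(g_k g_l g_m g_p)$ decomposes, by an Isserlis-type identity, into the two Wick pairings $\{(k,m),(l,p)\}$, $\{(k,p),(l,m)\}$ and a diagonal fourth-cumulant term. The pair contractions pin $(k,l)$ to $(-m,-p)$ or $(-p,-m)$, collapse the sum over $k+l=n$ and enforce $n+m+p=0$. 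Using $\omega_{-k}=-\omega_k$, the accumulated phase combines with $e^{i(\omega_m+\omega_p)t}$ from $a_m a_p$ into the advertised structure $(1-e^{i(\omega_n+\omega_m+\omega_p)t})/(\omega_n+\omega_m+\omega_p)$, while the coincidences $m=p$, $n=m$, $n=p$ produce the $\delta$-weighted pieces with cumulant coefficient $E(|g_n|^4)-2E(|g_n|^2)^2$, exactly as in the calculation behind Theorem \ref{th-mainmomtwo}. Summing the three symmetric contributions $E(b_n a_m a_p) + E(a_n b_m a_p) + E(a_n a_m b_p)$ yields $F_{n,m,p}$. The uniform summability of $\sum \sqrt{|n_1 m_1 p_1|}|nmp|^s |F_{n,m,p}(t)|$ then follows from the constraint $n+m+p=0$ (reducing the triple sum to a double sum), the universal bound $|(1-e^{i\phi t})/\phi|\leq \min(|t|, 2/|\phi|)$ coupled with a lower estimate on $|\omega_n+\omega_m+\omega_p|$ away from resonance, the $H^s$-integrability $\sum |n|^{2s}|\lambda_n|^2<\infty$, and a Young-type convolution inequality exploiting $s>1$.

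The main obstacle is the remainder $\widetilde R$, which collects all contributions of $\varepsilon$-degree at least $3$: the explicit trilinears $E(b_n b_m b_p)$, $E(c_n b_m a_p)$ and permutations, plus every term carrying at least one factor of $d$. The explicit pieces are handled by extending the Wick/cumulant computation one order further and bounding the resulting oscillatory sums in the same spirit as $F$. The $d$-terms are controlled via Cauchy--Schwarz in $\Omega$: the $L^2_\omega H^s$ norms of $a$, $b$, $c$ are bounded by the deterministic multilinear estimates on KP-II, while $d$ itself is controlled through the normal-form substitution $v = u + \varepsilon S(u,u)$ introduced in the proof of Theorem \ref{th-mainmomtwo}. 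The crucial gain is that the $v$-equation has a cubic source term of size $\varepsilon^2\|u\|^3$, one factor of $\varepsilon$ smaller than the original quadratic nonlinearity; through Duhamel this yields $\|d(t)\|_{H^s}\lesssim \varepsilon(1+|t|)$ on the window $|t|\lesssim \varepsilon^{-2}$, and the advertised $|t|(1+|t|)$ growth of $\widetilde R$ follows by combining this with one time integration acting on the lowest-order Duhamel term in the expansion of $E(u_n u_m u_p)$.
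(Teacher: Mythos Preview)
Your overall strategy---Picard expansion of $u$, parity/rotation-invariance to kill odd products of $g$'s, explicit Wick computation at order $\varepsilon$, and normal forms to control $d$---is the same as the paper's. There is however a genuine difference of method: you expand the \emph{product} $u_nu_mu_p$ directly, whereas the paper expands the \emph{time derivative} $\partial_t\big(e^{-i(\omega_n+\omega_m+\omega_p)t}E(u_nu_mu_p)\big)$, obtaining a single quadrilinear sum $\sum_{k+l=n}E(u_ku_lu_mu_p)$ from the equation before expanding in $a,b,e=c+\varepsilon d$. The paper's route has the advantage that the remainder $R$ bounds the derivative, so a time integration automatically produces the factor $|t|$ in $|\widetilde R|\le C_s(\min\ldots)^{-s}|t|(1+|t|)$; in your direct approach that $|t|$ factor is not free and you must extract it from the vanishing of $b,c,d$ at $t=0$ term by term.

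The real gap is your bound on $d$. The normal form $v=u+\varepsilon S(u,u)$ does give a source $\varepsilon^2F(u,u,u)$, but this does \emph{not} yield $\|d(t)\|_{H^s}\lesssim \varepsilon(1+|t|)$. What it gives (after isolating the $\varepsilon^3$ component $w$ of $v$ and inverting $\Lambda_\varepsilon$) is $\|d(t)\|_{L^\infty_\Omega H^s}\lesssim (1+|t|)^{3/2}$ on the window $|t|\lesssim \varepsilon^{-2}$: the source of the $w$-equation is $(F(u,u,u)-F(a,a,a))/\varepsilon$, which on that window is of size $(1+|t|)^{1/2}$ rather than $O(1)$ because of the $\varepsilon|t|$ terms coming from $c$. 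Your claimed $\varepsilon(1+|t|)$ would be strictly stronger, and I do not see how to get it; in particular the heuristic ``source is $\varepsilon^2\|u\|^3$, divide by $\varepsilon$'' loses the bookkeeping that defines $d$ as an $\varepsilon^3$ coefficient. With the correct bound $(1+|t|)^{3/2}$ your remainder still comes out right for $|t|\ge 1$, but for small $t$ you need in addition the refined information $\|d(t)\|\lesssim |t|$ near $0$ (which is true since $d(0)=0$), and this should be stated. Also, controlling $d$ by Cauchy--Schwarz in $\Omega$ (i.e.\ $L^2_\Omega$) will not close the fixed-point argument for $d$: the paper works in $L^\infty_\Omega H^s$ throughout, and this is what the hypothesis $u_0\in L^\infty(\Omega,H^s)$ is for.
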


We do not treat $p=1$ but the same computation as we will perform leads to the fact that the mean value of $u_n(t)$ is null up to order 2 in $\varepsilon$ and the remainder term is bounded by $C_s(1+|t|)|t|$. 

What is more, if the norm of $u_0$ is equal to $\mu$ instead of $1$, by replacing $u$ by $v= \frac{u}{\mu}$ and applying the theorem on $v$, we get that the estimates are valid until time $T = T_1  (\varepsilon \mu)^{-5/3}$ or $T = T_1  (\varepsilon \mu)^{-2}$ and $\widetilde R(n,m,t,\varepsilon)$ is bounded by $C_s \mu^2 |t| (1+|t|)$, and $\widetilde R(n,m,p,t,\varepsilon)$ is bounded by $C_s \mu^3 |t|(1+|t|)$.

\paragraph{Plan of the paper} The paper is organized as follows.

In Section 2, we define the problem more precisely and we compute the derivatives in time of $F_{n,m}(t)$ and $F_{n,m,p}(t)$. 

In Section 3, we reduce KP-II thanks to the normal form technique and get a bound for $d$. The main difference with \cite{Tlon} is that we need estimates independent from $\varepsilon$, and that we consider the expansion of the reduction.

In Section 4, we prove the estimates on $\widetilde R(n,m,t,\varepsilon)$,  $\widetilde R(n,m,p,t,\varepsilon)$, $F_{n,m}(t)$, and $F_{n,m,p}(t)$. At the end of this section, we propose to compare the result to formally invariant measures, using particular values for $\lambda_n$ and $g_n$. We then mention a possible form of the expansion of the moments of higher order.

\section{Expansion of the solution and formal computations}\label{sec-devsol}

In this section, we start by defining the objects we compute, and stating the assumptions on the initial datum. Then, we do the formal computations using the assumptions of independence and invariance by rotation of the initial datum.

\subsection{Definition of the problem and probabilistic assumptions on the initial datum}\label{sub-defpro}

We consider the Cauchy problem associated to KP-II with a weak non linearity on the torus of dimension 2, $\T^2$, that is : 
\begin{equation}\label{eq-kpii}\left \lbrace{\begin{tabular}{ll}
$\partial_x \left( \partial_t u + \partial_x^3 u + \varepsilon \frac{1}{2}\partial_x(u^2) \right) + \partial_y^2 u = 0$ \\
$u(t=0) = u_0$ \end{tabular}} \right. \; .\end{equation}
We suppose that $\varepsilon \ll 1$ and that the initial datum is a random variable on a probability space $(\Omega,\mathcal A, \mathbb P)$.

We assume that the mean value of the solution $u$ along its first variable $x$ is $0$, as it is a property preserved by the flow of KP-II. In other terms, we assume that the Fourier coefficients $u_{(0,n_2)}$ of the solution are zero and work on Sobolev spaces of functions satisfying this property. 

\begin{definition} We call $H^s$ the topological space of functions $u$ such that $\int_{\T}udx = 0$, that $u$ is real valued, and induced by the norm : 
$$
\|\sum_n u_n e^{inz}\|_{H^s} = \sqrt{\sum_{n_1\neq 0} |n|^{2s} |u_n|^2}
$$
with $n=(n_1,n_2)$, $z=(x,y)$, $nz=n_1x+n_2y$, $|n| = |n_1|+|n_2|$ and where $u_n$ is the Fourier coefficient of $u$ associated to the space frequency $n$.\end{definition}

In this space, we can write \eqref{eq-kpii} considering where the initial datum lives as 
\begin{equation}\label{eqde-kpii}\left \lbrace{\begin{tabular}{ll}
$\partial_t u + \partial_x^3 u + \varepsilon \frac{1}{2}\partial_x(u^2) + \partial_x^{-1} \partial_y^2 u = 0$ \\
$u(t=0) = u_0\in L^\infty(\Omega,H^s)$ \end{tabular}} \right. \; .\end{equation}

Writing the solution $u (t,x,y) = \sum_n u_n(t) e^{inz}$, we aim to develop the mean values
\begin{equation}\label{aim}
E(u_n(t)\overline{u_m(t)})\: , \; E(u_n(t)u_m(t)u_p(t))
\end{equation}
where $E$ is the expectation with regard to the probability space $(\Omega,\mathcal A, \mathbb P)$ in their different orders in $\varepsilon$ up to order 3 for the former one and 2 for the latter.

We make some assumptions on the initial datum $u_0$. We assume that $u_0$ can be written : 
$$
u_0(x,y) = \sum_{n_1\neq 0} \lambda_n g_n e^{inz}
$$
where $(\lambda_n)$ is a sequence of complex numbers and $(g_n)$ a sequence of random variables from $\Omega$ to $\C$. We assume that $u_0$ belongs to $L^\infty(\Omega, H^s)$, with $s>1$. To remain in a real framework, we impose that
$$
\lambda_{-n} = \overline{\lambda_n}\; , \; g_{-n} = \overline{g_n}\; .
$$
where $-n = (-n_1,-n_2)$. Finally, we assume that the $g_n$ for $n_1> 0$ are all independent from each other, have the same law and that this law is invariant by all the rotations, that is for all $\theta \in [0,2\pi]$, $e^{i\theta}g_n$ has the same law as $g_n$.

\begin{remark}It is common (when we do not have the $L^\infty$ assumption) to consider that the $g_n$ should be complex centred and normalized Gaussian variables. In this case, $g_n$ can be written as $g_n = h_n+il_n$ where $h_n$ and $l_n$ are real centred Gaussian variable independent from each other. However, in the general case, if $g_n$ is separated in its real and imaginary parts as $h_n+il_n$, to assume that $h_n$ and $l_n$ have the same law, are independent and invariant by multiplication by $-1$ will not guarantee that $g_n$ is invariant by rotation. Indeed, in this case the mean value of $g_n^4$ is equal to $2E(h_n^4)-6E(h_n^2)^2$ instead of $0$ if $g_n$ is invariant by rotation. The invariance by rotation is a crucial ingredient in the formal computation done in the next subsections. This should explain why we kept the complex structure of the solution instead of writing it in the basis obtained with the sines and cosines.\end{remark}

We now explain how we intend to make the afore-mentioned expansions \eqref{aim}.

We expand $u$ into its first Picard interactions, which means that we write
$$
u = a + \varepsilon b + \varepsilon^2 c + \varepsilon^3d(\varepsilon)
$$
where $a$ is the solution of the linearised around $0$ equation of KP-II
$$
\partial_x \left( \partial_t a + \partial_x^3 a \right) + \partial_y^2 a = 0
$$
with initial datum $u_0$, where $b$ is the solution of 
$$
\partial_x \left( \partial_t b + \partial_x^3 b + \frac{1}{2}\partial_x(a^2) \right) + \partial_y^2 b = 0
$$
with initial datum $0$, $c$ is the solution of
$$
\partial_x \Big( \partial_t c + \partial_x^3c + \partial_x (ab) \Big) + \partial_y^2 c = 0
$$
with initial datum $0$ and $d$ is what is left, which means that $d$ is the solution of 
\begin{equation}\label{eqond}
\partial_x \left( \partial_t d + \partial_x^3 d + \frac{1}{2}\partial_x(b^2 + 2ac + 2\varepsilon (bc + ad) +\varepsilon^2 (c^2+2bd) + 2\varepsilon^3 cd + \varepsilon^4 d^2) \right)
 + \partial_y^2 d = 0
\end{equation}
with initial datum $0$, hence depending on $\varepsilon$, unlike $a$, $b$ and $c$. 

We have explicit expressions of $a$, $b$ and $c$ depending on $u_0$, which will enable us to do the computations of the first orders  in $ \varepsilon$ of $E(u_n\overline{u_m})$ and $E(u_nu_mu_p)$.

Let us give further notations.

We write $\omega_n = n_1^3 - \frac{n_2^2}{n_1}$ such that $i\omega_n$ is the eigenvalue of
$$
L = -\partial_x^3 - \partial_x^{-1}\partial_y^2 \
$$
associated to $e^{inz}$.

The flow of the equation $\partial_t u = Lu$ is then denoted by $U(t)$ and we have
$$
U(t) \left( \sum_n u_n e^{inz}\right) = \sum_n e^{i\omega_n t } u_n e^{inz} \; .
$$
Hence, $a$ is equal to $U(t)u_0$ and its Fourier coefficient $a_n$ is given by $e^{i\omega_n t}\lambda_n g_n$.

It is known that KP-II present no resonances regarding three waves interaction in the sense of the following proposition.

\begin{proposition} For all $k,l,n\in \Z^2$ such that $k_1$, $l_1$ and $n_1$ are different from zero and $k+l=n$, we have that
$$
|\omega_n -\omega_k -\omega_l |\geq 3|n_1k_1l_1| \; .
$$
\end{proposition}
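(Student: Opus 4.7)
The plan is to compute $\omega_n-\omega_k-\omega_l$ in closed form under the constraint $n=k+l$ and show that the two resulting summands carry the \emph{same sign}, so that no cancellation can occur and $3|n_1k_1l_1|$ is attained as a pure lower bound.

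Concretely, I would split $\omega_n-\omega_k-\omega_l=(n_1^3-k_1^3-l_1^3)+\bigl(-\tfrac{n_2^2}{n_1}+\tfrac{k_2^2}{k_1}+\tfrac{l_2^2}{l_1}\bigr)$. Using $n_1=k_1+l_1$, the cubic identity $(k_1+l_1)^3-k_1^3-l_1^3=3k_1l_1(k_1+l_1)$ yields
$$n_1^3-k_1^3-l_1^3=3\,n_1k_1l_1.$$
For the rational part, I would reduce to the common denominator $n_1k_1l_1$ and expand the numerator using $n_1=k_1+l_1$, $n_2=k_2+l_2$. The routine calculation should collapse to a perfect square:
$$-n_2^2k_1l_1+k_2^2n_1l_1+l_2^2n_1k_1=(k_2l_1-k_1l_2)^2.$$
Thus
$$\omega_n-\omega_k-\omega_l=3\,n_1k_1l_1+\frac{(k_2l_1-k_1l_2)^2}{n_1k_1l_1}.$$

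The key observation — and the only place any thought is needed — is that both summands on the right carry the sign of $n_1k_1l_1$: the first trivially, and the second because its numerator is a square. Consequently
$$|\omega_n-\omega_k-\omega_l|=3|n_1k_1l_1|+\frac{(k_2l_1-k_1l_2)^2}{|n_1k_1l_1|}\geq 3|n_1k_1l_1|,$$
which is exactly the claim. The main (mild) obstacle is pushing the algebra to recognize the perfect-square structure in the numerator; once that identity is in hand, the sign-coincidence of the two terms — and hence the absence of cancellations — is immediate, and no further estimate is required.
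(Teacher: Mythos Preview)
Your proof is correct and is exactly the ``straightforward computation'' the paper alludes to: the identity $\omega_n-\omega_k-\omega_l=3\,n_1k_1l_1+\dfrac{(k_2l_1-k_1l_2)^2}{n_1k_1l_1}$ is the standard way to see the KP-II non-resonance, and your sign-coincidence observation is precisely what gives the bound.
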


\begin{proof}The proof is a straightforward computation. \end{proof}

We then write $\Delta_n^{k,l} = \omega_k +\omega_l -\omega_n$ and use this notation to describe $b$. 

\begin{lemma}\label{lem-formob} The order 1 in $\varepsilon$ of the solution $u$ of KP-II is 
$$
b(t) = \sum_n b_n(t) e^{inz}
$$
where
$$
b_n(t) = -\frac{n_1}{2} \sum_{k+l=n} e^{i\omega_n t }\frac{e^{i\Delta_n^{k,l}t}-1}{\Delta_n^{k,l}} \lambda_k\lambda_l g_k g_l\; .
$$
\end{lemma}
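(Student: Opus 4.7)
The plan is to apply Duhamel's formula to the linear PDE satisfied by $b$ and then carry out the Fourier-series bookkeeping.

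First, I would rewrite the equation for $b$ in a form that matches the generator $L=-\partial_x^3-\partial_x^{-1}\partial_y^2$ of the linear flow $U(t)$. Since we work in $H^s$, functions have zero $x$-mean, so $\partial_x$ is invertible on these spaces, and the defining equation
$$
\partial_x\left(\partial_t b+\partial_x^3 b+\tfrac{1}{2}\partial_x(a^2)\right)+\partial_y^2 b=0
$$
is equivalent to
$$
\partial_t b-Lb=-\tfrac{1}{2}\partial_x(a^2),\qquad b(0)=0.
$$

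Next I would apply Duhamel's principle, which gives
$$
b(t)=-\tfrac{1}{2}\int_0^t U(t-s)\partial_x\bigl(a(s)^2\bigr)\,ds.
$$
Using the explicit Fourier expansion $a(s,x,y)=\sum_k e^{i\omega_k s}\lambda_k g_k e^{ikz}$, I would expand
$$
\partial_x(a(s)^2)=\sum_n in_1\,e^{inz}\sum_{k+l=n}e^{i(\omega_k+\omega_l)s}\lambda_k\lambda_l g_k g_l,
$$
then apply $U(t-s)$, which multiplies the mode $e^{inz}$ by $e^{i\omega_n(t-s)}$, to get an integrand of the form $in_1 e^{i\omega_n t}e^{i\Delta_n^{k,l}s}$. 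Since the preceding proposition guarantees $\Delta_n^{k,l}\neq 0$ whenever $n_1,k_1,l_1\neq 0$, the time integral can be computed explicitly as $(e^{i\Delta_n^{k,l}t}-1)/(i\Delta_n^{k,l})$, yielding exactly
$$
b_n(t)=-\frac{n_1}{2}\,e^{i\omega_n t}\sum_{k+l=n}\frac{e^{i\Delta_n^{k,l}t}-1}{\Delta_n^{k,l}}\,\lambda_k\lambda_l g_k g_l.
$$

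The computation is essentially routine, so the only mild obstacle is justifying the interchange of the sum over $(k,l)$ with the time integral and the operator $U(t-s)$. This follows because $u_0\in L^\infty(\Omega,H^s)$ with $s>1$ and the multipliers $|n_1|/|\Delta_n^{k,l}|\le 1/(3|k_1 l_1|)$ from the non-resonance bound are bounded independently of $n$, so the Fourier series defining $b$ converges absolutely in $H^s$ for each fixed $t$, legitimizing all the manipulations.
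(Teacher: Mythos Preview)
Your proof is correct and follows essentially the same approach as the paper: both derive the ODE $\dot b_n = i\omega_n b_n - \tfrac{in_1}{2}\sum_{k+l=n}a_k a_l$ (equivalently, $\partial_t b - Lb = -\tfrac{1}{2}\partial_x(a^2)$), substitute $a_k = e^{i\omega_k t}\lambda_k g_k$, and integrate using $b(0)=0$. The paper writes this mode-by-mode from the outset while you phrase it first via Duhamel in operator form, but the computations are identical; your added remark on absolute convergence to justify the interchanges is a nice touch that the paper omits.
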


\begin{proof} We write the equation satisfied by $b_n(t)$ : 
$$
\dot b_n(t) = i \omega_n b_n(t) - \frac{in_1}{2} \sum_{k+l=n} a_k a_l\; ,
$$
and then replace $a_ka_l$ by its value to get
$$
\dot b_n(t) = i \omega_n b_n(t) - e^{i\omega_n t} \frac{in_1}{2} \sum_{k+l=n}e^{i\Delta_n^{k,l}t}\lambda_k\lambda_l g_k g_l
$$
with initial datum $0$, which we integrate to get the result, as 
$$
b_n(t) e^{-i\omega_n t} = -\frac{in_1}{2}\int_{0}^t  \sum_{k+l = n} e^{i\Delta_n^{k,l}t'} \lambda_k\lambda_l g_k g_l \; .
$$\end{proof}

\begin{lemma} The order 2 in $\varepsilon$ of the solution $u$ of KP-II is
$$
c = \sum_n c_n(t) e^{inz}
$$
where 
$$
c_n(t) = in_1 e^{i\omega_n t }\sum_{j+k+l = n} \Big[ \int_{0}^t \left( e^{i(\omega_j+\omega_k+\omega_l-\omega_n)t'} - e^{i\Delta_n^{j,k+l}t'}\right) dt' \Big] \frac{k_1+l_1}{2 \Delta_{k+l}^{k,l}} \lambda_j \lambda_k \lambda_l g_j g_kg_l \; .
$$
\end{lemma}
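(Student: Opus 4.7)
The plan is to mimic the proof of Lemma \ref{lem-formob} one Picard step further: project the defining PDE for $c$ onto the Fourier mode $e^{inz}$, substitute the already-known formulas for the Fourier coefficients of $a$ and $b$ into the nonlinear source term, and then integrate in time against the initial condition $c_n(0)=0$.

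Concretely, I would first write the equation satisfied by $c_n(t)$, namely
$$
\dot c_n(t) = i\omega_n c_n(t) - in_1 (ab)_n(t),
$$
so that $(ab)_n(t) = \sum_{m+p=n} a_m(t) b_p(t)$ expands, after relabelling $m=j$ and $k+l=p$, into a triple sum over $j+k+l=n$. Plugging in $a_j(t) = e^{i\omega_j t}\lambda_j g_j$ and the expression for $b_{k+l}(t)$ given by Lemma \ref{lem-formob} (with the $p_1=k_1+l_1$ prefactor), the source term takes the form
$$
-in_1(ab)_n(t) = \frac{in_1}{2}\sum_{j+k+l=n} \frac{k_1+l_1}{\Delta_{k+l}^{k,l}} \bigl(e^{i(\omega_j+\omega_k+\omega_l)t}-e^{i(\omega_j+\omega_{k+l})t}\bigr)\lambda_j\lambda_k\lambda_l g_j g_k g_l.
$$

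Next I would perform the standard change of variable $\tilde c_n(t) = e^{-i\omega_n t} c_n(t)$, which removes the linear part and, using $\omega_j+\omega_{k+l}-\omega_n = \Delta_n^{j,k+l}$ together with $\omega_j+\omega_k+\omega_l - \omega_n$ in the other exponent, gives
$$
\frac{d}{dt}\tilde c_n(t) = \frac{in_1}{2}\sum_{j+k+l=n} \frac{k_1+l_1}{\Delta_{k+l}^{k,l}}\bigl(e^{i(\omega_j+\omega_k+\omega_l-\omega_n)t}-e^{i\Delta_n^{j,k+l}t}\bigr)\lambda_j\lambda_k\lambda_l g_j g_k g_l.
$$
Integrating from $0$ to $t$ with $\tilde c_n(0)=0$ and multiplying back by $e^{i\omega_n t}$ produces exactly the announced formula.

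The steps are all routine; the only care required is the algebraic bookkeeping of the phase factors (to make sure that $e^{i\omega_n t}$ is the correct global phase and that $\Delta_n^{j,k+l}=\omega_j+\omega_{k+l}-\omega_n$ appears in the right exponent), plus the sign and combinatorial factor coming from the derivative $\frac{1}{2}\partial_x(ab+ba)=\partial_x(ab)$ hidden in the source term. Once these are correctly tracked, the formula for $c_n(t)$ follows immediately from integration.
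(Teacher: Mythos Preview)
Your proposal is correct and follows essentially the same route as the paper: project onto the $n$-th Fourier mode, substitute the explicit formulas for $a_j$ and $b_{k+l}$, identify the phases via $\omega_j+\omega_{k+l}-\omega_n=\Delta_n^{j,k+l}$ and $\omega_{k+l}+\Delta_{k+l}^{k,l}=\omega_k+\omega_l$, and integrate. The only cosmetic difference is that the paper starts from the Duhamel integral $c(t)=-\int_0^t U(t-t')\partial_x(ab)\,dt'$ whereas you write the ODE for $c_n$ and pass to $\tilde c_n=e^{-i\omega_n t}c_n$; these are the same computation.
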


\begin{proof} Since $c$ solves 
$$
\partial_t c -L c + \partial_x (ab) = 0
$$
with initial value $0$, it can be written 
$$
c(t) = - \int_{0}^t U(t-t') \partial_x (ab) dt' \; .
$$
Hence, its $n$-th Fourier coefficients is given by
$$
c_n(t) = - \int_0^t e^{i\omega_n (t-t')} in_1 \sum_{k+l=0} a_k(t') b_l(t')dt'
$$
and by replacing $a_k$ and $b_l$ by their values, we get
$$
c_n(t)= in_1 e^{i\omega_n t } \sum_{k+l = n} \int_{0}^t \left( e^{i\Delta_n^{k,l}t'} \lambda_k g_k \frac{l_1}{2}\sum_{j+q = l}\frac{e^{i\Delta_l^{j,q}t'}-1}{\Delta_l^{j,q}} \lambda_j\lambda_q g_j g_q\right) dt'
$$
and, suppressing the use of the notation $l$, as $\Delta_n^{k,l} + \Delta_l^{j,q}  = -\omega_n + \omega_j + \omega_k + \omega_q$ and $ \Delta_n^{k,l} = \Delta_n^{k,j+q}$, we get
$$
c_n(t) = in_1 e^{i\omega_n t } \sum_{j+q+k = n} \left(\int_{0}^t \Big[ e^{i(-\omega_n + \omega_j + \omega_k + \omega_q) t'} - e^{i\Delta_n^{k,j+q}t'} \Big]dt'\right) \frac{j_1+q_1}{2\Delta_{j+q}^{j,q}} \lambda_j\lambda_k\lambda_q g_jg_kg_q\; .
$$
\end{proof}

\subsection{Expansion of the moments of order 2}\label{sub-momtwo}

The following lemma sums up the formal computation related to $E(u_n\overline{u_m})$. Let us remark that they are essentially the same as in \cite{dSTont}.

\begin{lemma}The expansion of $\partial_t E(e^{i(\omega_m -\omega_n)t}u_n\overline{u_m})$ is written : 
$$
\partial_t E(e^{i(\omega_m -\omega_n)t}u_n\overline{u_m}) =\varepsilon^2  \delta_n^m  G_n(t) + \varepsilon^4 R(\varepsilon, n,m,t)
$$
with the term of order 2 in $\varepsilon$ given by
\begin{eqnarray*}
G_n(t) & = &  - n_1 E(|g_n|^2)^2 \sum_{k +l = n} \frac{\sin(\Delta_n^{k,l}t)}{\Delta_n^{k,l}} \Big( k_1 |\lambda_n|^2|\lambda_l|^2 + l_1 |\lambda_n|^2|\lambda_k|^2 - n_1 |\lambda_k|^2|\lambda_l|^2\Big) + \\
 & & -n_1 (E(|g_n|^4)-2 E(|g_n|^2)^2) \Big( 2n_1 \frac{(\sin(\Delta_n^{-n,2n}t)}{\Delta_n^{-n,2n}} |\lambda_n|^4 - \delta_{n/2 \in \N^2}\frac{n_1}{2} \frac{\sin(\Delta_n^{n/2,n/2}t)}{\Delta_n^{n/2,n/2}} |\lambda_{n/2}|^4\Big) \; .
\end{eqnarray*}
where $\delta_n^m$ is the Kronecker symbol, equal to $1$ if $n=m$, $0$ otherwise.

What is more, the remainder satisfies, if $s\geq 1$
\begin{eqnarray*}
|R(\varepsilon,n,m,t)| & \leq & C \min(|n|,|m|)^{-s} \Big( \|a\|^2_{L^\infty,H^s}\|d\|_{L^\infty,H^s} + \|a\|_{L^\infty,H^s}\|b\|_{L^\infty,H^s}\|c\|_{L^\infty,H^s}+\|b\|_{L^\infty,H^s}^3\\
 & & \varepsilon (\|a\|_{L^\infty,H^s}\|b\|_{L^\infty,H^s}\|d\|_{L^\infty,H^s}+\|a\|_{L^\infty,H^s}\|c\|_{L^\infty,H^s}^2+\|b\|_{L^\infty,H^s}^2\|c\|_{L^\infty,H^s} ) + \\
 & & \varepsilon^2 (\|a\|_{L^\infty,H^s}\|c\|_{L^\infty,H^s}\|d\|_{L^\infty,H^s}+\|b\|_{L^\infty,H^s}^2\|d\|_{L^\infty,H^s}+\|b\|_{L^\infty,H^s}\|c\|_{L^\infty,H^s}^2) + \\
 & & \varepsilon^3 (\|b\|_{L^\infty,H^s}\|c\|_{L^\infty,H^s}\|d\|_{L^\infty,H^s}+\|c\|_{L^\infty,H^s}^3) + \\
 & & \varepsilon^4 (\|b\|_{L^\infty,H^s}\|d\|_{L^\infty,H^s}^2+\|c\|_{L^\infty,H^s}^2\|d\|_{L^\infty,H^s}) + \\
 & & \varepsilon^5 \|c\|_{L^\infty,H^s}\|d\|_{L^\infty,H^s}^2 + \\
 & & \varepsilon^6 \|d\|_{L^\infty,H^s}^3 \Big)\; ,
\end{eqnarray*}
where the $L^\infty$ norm is taken on the probability space $\Omega,\mathcal A, \mathbb P$ and the $H^s$ norm on the torus $\T^2$.
\end{lemma}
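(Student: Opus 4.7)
The plan is to work with the profile coefficients $\tilde u_n(t):=e^{-i\omega_n t}u_n(t)$, so that the quantity to be expanded is simply $\tilde u_n\overline{\tilde u_m}$ and $\tilde a_n=\lambda_n g_n$ is time-independent. From \eqref{eqde-kpii} one obtains $\partial_t\tilde u_n=-\frac{i\varepsilon n_1}{2}\sum_{k+l=n}e^{i\Delta_n^{k,l}t}\tilde u_k\tilde u_l$, hence
\[
\partial_t E(\tilde u_n\overline{\tilde u_m})=-\tfrac{i\varepsilon n_1}{2}\sum_{k+l=n}e^{i\Delta_n^{k,l}t}E(\tilde u_k\tilde u_l\overline{\tilde u_m})+\tfrac{i\varepsilon m_1}{2}\sum_{k+l=m}e^{-i\Delta_m^{k,l}t}E(\tilde u_n\overline{\tilde u_k}\,\overline{\tilde u_l}).
\]
The Picard decomposition $\tilde u=\tilde a+\varepsilon\tilde b+\varepsilon^2\tilde c+\varepsilon^3\tilde d$ then turns each triple expectation into a power series in $\varepsilon$ whose coefficient at order $j$ is a finite sum of three-point expectations $E(\tilde x_k\tilde y_l\overline{\tilde z_m})$ with $x,y,z\in\{a,b,c,d\}$ of total order $j$. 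Since $d$ only enters from $j\geq 3$ on, the terms at $j\leq 2$ involve solely $a,b,c$.

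The key simplification is that the independence of the $(g_n)_{n_1>0}$ together with the rotational invariance of each law forces $E(\prod_i g_{k_i})$ to vanish unless, for every $k$ with $k_1>0$, $g_k$ and $\overline{g_k}=g_{-k}$ appear with equal multiplicity; in particular, every monomial of odd total degree in the $g$'s has zero expectation. Since $a,b,c$ are polynomials of degrees $1,2,3$ in the $g$'s (by the expression of $a$, by Lemma~\ref{lem-formob}, and by its analogue for $c$), the $j=0$ and $j=2$ contributions to each triple expectation are respectively of total degrees $3$ and $5$, hence vanish. This accounts for the absence of $\varepsilon^1$ and $\varepsilon^3$ terms in the expansion of $\partial_t E(\tilde u_n\overline{\tilde u_m})$.

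The $\varepsilon^2\delta_n^m G_n(t)$ term then arises from the degree-$4$ monomials at $j=1$, namely $E(\tilde b_k\tilde a_l\overline{\tilde a_m})+E(\tilde a_k\tilde b_l\overline{\tilde a_m})+E(\tilde a_k\tilde a_l\overline{\tilde b_m})$ together with their symmetric counterparts from the second Duhamel sum. Substituting the explicit formula of Lemma~\ref{lem-formob} for $\tilde b$ and enumerating the nonzero pairings of the four $g$-factors by independence, one finds that only $n=m$ survives, and a case split separates the generic pairings (producing the factor $E(|g_n|^2)^2$ summed over $k+l=n$) from the diagonal cases where several indices coincide --- specifically $\{k,l\}=\{-n,2n\}$ and $k=l=n/2$ --- which are the only configurations producing a genuine fourth moment $E(|g_n|^4)$, recombined with the Wick counter-term to yield the correction proportional to $E(|g_n|^4)-2E(|g_n|^2)^2$. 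Tracking the Duhamel phases $e^{\pm i\Delta_n^{k,l}t}$ and collecting the two symmetric contributions reproduces precisely the stated formula for $G_n(t)$.

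All remaining contributions, of order $j\geq 3$ inside the triple expectations, are lumped into $\varepsilon^4 R$. Each is a product of three profiles in $\{a,b,c,d\}$ with at least one being $c$, $d$, or a degree-$6$ combination such as $\tilde b\tilde b\overline{\tilde b}$ or $\tilde a\tilde b\overline{\tilde c}$. Bounding pointwise in $\omega$ via $|f_k|\leq\|f\|_{H^s}|k|^{-s}$ and applying Cauchy--Schwarz in $\Omega$ yields a factor $|k|^{-s}|l|^{-s}|m|^{-s}$ multiplied by a triple product of $L^\infty_\omega H^s_{x,y}$ norms; the convolution estimate $\sum_{k+l=n}|k|^{-s}|l|^{-s}\lcurl|n|^{-s}$, valid for $s>1$, then handles the summation, and symmetrising between the $n$- and $m$-sums produces the $\min(|n|,|m|)^{-s}$ weight. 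The main obstacle is the third step above: correctly disentangling all degree-$4$ pairings, isolating the diagonal corrections with the $E(|g_n|^4)-2E(|g_n|^2)^2$ factor, and checking that the $\varepsilon^3$ contribution really cancels. The enumeration giving the stated remainder bound is then essentially mechanical, each term in the list corresponding to one combination of three factors from $\{a,b,c,d\}$ of total $\varepsilon$-order at least~$3$.
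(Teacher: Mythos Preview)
Your proposal is correct and follows essentially the same route as the paper: differentiate using the Duhamel form of the equation, insert the Picard expansion $u=a+\varepsilon b+\varepsilon^2 c+\varepsilon^3 d$, kill the odd-degree contributions (orders $1$ and $3$) by rotational invariance, compute the order-$2$ term by enumerating the four-point pairings of the $g$'s (isolating the diagonal cases $\{k,l\}=\{-n,2n\}$ and $k=l=n/2$ that produce the $E(|g_n|^4)-2E(|g_n|^2)^2$ correction), and bound the rest termwise. Your use of profile coordinates $\tilde u_n=e^{-i\omega_n t}u_n$ is merely a cosmetic variant of the paper's bookkeeping with the phase $e^{i(\omega_m-\omega_n)t}$.

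Two small points worth tightening. First, when you bound the remainder you invoke ``Cauchy--Schwarz in $\Omega$'' on a triple product; since the norms in the statement are $L^\infty_\Omega$, what you really use is just $|E(XYZ)|\le\|X\|_{L^\infty}\|Y\|_{L^\infty}\|Z\|_{L^\infty}$. Second, after your convolution estimate $\sum_{k+l=n}|k|^{-s}|l|^{-s}\lesssim|n|^{-s}$ you still carry the prefactor $|n_1|$ (resp.\ $|m_1|$) from the equation; you should say explicitly that $|n_1|\,|n|^{-s}\le 1$ absorbs it, leaving $|m|^{-s}$ from the first sum and $|n|^{-s}$ from the second, whose maximum is $\min(|n|,|m|)^{-s}$. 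This is exactly how the paper gets its bound (there phrased via the algebra property of $H^s$), and it is also why one really needs $s>1$ here rather than $s\ge 1$.
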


\begin{remark}It is noticeable that the term of order 1 is always null and that the term of order 2 is null when $n\neq m$. \end{remark}

\begin{proof} We begin by writing $\partial_t E(e^{i(\omega_m -\omega_n)t}u_n\overline{u_m})$ as :
$$
\partial_t E(u_n\overline{u_m}) = A_{n,m}(t) + \overline A_{m,n} (t)
$$
with
$$
A_{n,m}(t) =   \varepsilon \frac{im_1}{2} \sum_{k+l = m} e^{i(\omega_m -\omega_n)t}E(u_n \overline{u_k u_l}) \; ,
$$
by replacing $\overline{\partial_t e^{-i\omega_m t} u_m}$ by its expression as $u$ is a solution of KP-II, that is
$$
\overline{\partial_t e^{-i\omega_m t} u_m} = \overline{-e^{-i\omega_m t } \frac{im_1}{2}\sum_{k+l = m}u_ku_l}\; .
$$

We then expand $A_{n,m}(t)$ in terms of $\varepsilon$ as
$$
A_{n,m}(t)=  \sum_{j=1}^{10} \varepsilon^j A_{n,m}^j (t)
$$
by expanding $u_n$ in $a_n+\varepsilon b_n + \varepsilon^2 c_n + \varepsilon^3d_n(\varepsilon)$ and where $A_{n,m}^j(t)$ is of the form 
$$
A_{n,m}^j(t) = \frac{im_1}{2}e^{i(\omega_m-\omega_n)t}\sum_{k+l=m} E(\alpha_n\overline{\beta_k \gamma_l})
$$
and where $\alpha$, $\beta$, and $\gamma$ are replaced with either $a$, $b$, $c$, or $d$. The index $j$ is equal to 1 plus the number of occurrences of $b$ plus twice the one of $c$, plus thrice the one of $d$. This is summed up in the next table.

\bigskip

\begin{tabular}{|c|c|c|c|c|}
Occurrences of $a$ & Occurrences of $b$ & Occurrences of $c$ & Occurrences of $d$ & Order in $\varepsilon$ \\ \hline
$0$ & $0$ & $0$ & $3$ & $10$ \\ \hline 
$0$ & $0$ & $1$ & $2$ & $9$ \\ \hline
$0$ & $0$ & $2$ & $1$ & $8$ \\ \hline 
$0$ & $0$ & $3$ & $0$ & $7$ \\ \hline
$0$ & $1$ & $0$ & $2$ & $8$ \\ \hline
$0$ & $1$ & $1$ & $1$ & $7$ \\ \hline
$0$ & $1$ & $2$ & $0$ & $6$ \\ \hline
$0$ & $2$ & $0$ & $1$ & $6$ \\ \hline 
$0$ & $2$ & $1$ & $0$ & $5$ \\ \hline
$0$ & $3$ & $0$ & $0$ & $4$ \\ \hline
$1$ & $0$ & $0$ & $2$ & $7$ \\ \hline
$1$ & $0$ & $1$ & $1$ & $6$ \\ \hline
$1$ & $0$ & $2$ & $0$ & $5$ \\ \hline
$1$ & $1$ & $0$ & $1$ & $5$ \\ \hline
$1$ & $1$ & $1$ & $0$ & $4$ \\ \hline
$1$ & $2$ & $0$ & $0$ & $3$ \\ \hline
$2$ & $0$ & $0$ & $1$ & $4$ \\ \hline
$2$ & $0$ & $1$ & $0$ & $3$ \\ \hline
$2$ & $1$ & $0$ & $0$ & $2$ \\ \hline
$3$ & $0$ & $0$ & $0$ & $1$ \end{tabular}

First, we prove that $A^1_{n,m}$ is equal to $0$. We have that
$$
A^1_{n,m}(t) = e^{i(\omega_m -\omega_n)t} \frac{im_1}{2} \sum_{k+l = m}E( a_n \overline{a_k a_l} )\; ,
$$
As the law of $g_n$ is invariant by rotation, and that the $g_k$ are independent from each other, we get that for all $k,l,n$,
$$
E(g_kg_lg_n) = 0\; .
$$
Indeed, if one of the indexes, for instance $n$, is different from the other ones and their opposites then $E(g_ng_kg_l) = E(g_n)E(g_kg_l)$ and the mean value of $g_n$ is $0$. Otherwise, it is equal to either $E(g_n^3)$, $E(g_n^2\overline{g_n})$, $E((\overline{g_n})^2 g_n)$, $E(\overline{g_n}^3)$, which are all equal to $0$ by invariance of the law by rotation.

Since $a_k$ is equal to $\lambda_k g_n e^{i\omega_k t}$, we have that
$$
E(a_na_k a_l) = 0
$$
hence $A_{n,m}^1(t)$ is equal to $0$ for all $n,m,t$.

The same argument works for $A^3$ since a sum of products of 1 $a$ and 2 $b$ or 2 $a$ and 1 $c$ is a sum of products of 5 $g$, which gives, as $5$ is odd, $A^3_{n,m}(t) = 0$.

Indeed, we can prove by induction that the expectation of any product of an odd number of $g$ is null.

\begin{lemma}Let $p\in \N$. Then, for all $(n_1, \hdots, n_{2p+1}) \in (\Z^*\times \Z)^{2p+1}$, we have
$$
E\Big( \prod_{i=1}^{2p+1} g_{n_i}\Big) = 0 \; .
$$
\end{lemma}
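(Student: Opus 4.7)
The plan is to reduce the $(2p+1)$-factor expectation to a product of single-variable expectations via independence, and then show that at least one factor must vanish by rotation invariance. For $n \in \Z^* \times \Z$, set $n^+ = n$ if $n_1 > 0$ and $n^+ = -n$ otherwise, so that $g_n = g_{n^+}$ when $n_1 > 0$ and $g_n = \overline{g_{n^+}}$ when $n_1 < 0$, using the hypothesis $g_{-n} = \overline{g_n}$. Let $M \subset \N^* \times \Z$ be the set of distinct values of $n_i^+$ for $i = 1, \dots, 2p+1$, and for each $m \in M$ set $a_m = \#\{i : n_i = m\}$ and $b_m = \#\{i : n_i = -m\}$. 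Then
$$\prod_{i=1}^{2p+1} g_{n_i} = \prod_{m \in M} g_m^{a_m} \overline{g_m}^{b_m}.$$

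Since the variables $(g_m)_{m \in \N^* \times \Z}$ are independent, the expectation factors as $\prod_{m \in M} E(g_m^{a_m} \overline{g_m}^{b_m})$. As $\sum_{m \in M}(a_m + b_m) = 2p+1$ is odd, at least one $m_0 \in M$ satisfies $a_{m_0} + b_{m_0}$ odd, and in particular $a_{m_0} - b_{m_0} \neq 0$. Rotation invariance then kicks in: since $e^{i\theta}g_{m_0}$ has the same law as $g_{m_0}$ for every $\theta \in \R$,
$$E\bigl(g_{m_0}^{a_{m_0}} \overline{g_{m_0}}^{b_{m_0}}\bigr) = e^{i\theta(a_{m_0} - b_{m_0})} E\bigl(g_{m_0}^{a_{m_0}} \overline{g_{m_0}}^{b_{m_0}}\bigr).$$
Choosing $\theta$ with $e^{i\theta(a_{m_0} - b_{m_0})} \neq 1$ forces this factor, and hence the whole expectation, to vanish.

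This approach actually bypasses induction: the oddness of $2p+1$ is used exactly once, to produce the unbalanced factor $m_0$. The only delicate point is checking that the canonical choice of $n^+$ is well defined, which needs $n_1 \neq 0$ and is precisely why the indices are taken in $\Z^* \times \Z$; with this convention distinct $m \in M$ correspond to genuinely independent underlying random variables rather than to conjugate copies of the same one. As a byproduct the argument shows, more generally, that $E\bigl(\prod_i g_{n_i}\bigr) = \prod_{m \in M} E\bigl(g_m^{a_m}\overline{g_m}^{b_m}\bigr)$ vanishes as soon as some $a_m \neq b_m$, which will be the workhorse for isolating the surviving contributions in the subsequent expansions of $E(u_n\overline{u_m})$ and $E(u_n u_m u_p)$.
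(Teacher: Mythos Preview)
Your proof is correct. It uses the same two ingredients as the paper --- independence of the $(g_m)_{m\in\N^*\times\Z}$ and rotation invariance of each $g_m$ --- but organizes them differently. The paper argues by induction on $p$: it singles out the class $\{i : |n_i| = |n_1|\}$, and either this class exhausts all indices (in which case rotation invariance applies directly, exactly as in your argument), or the complement is nonempty and one factors the expectation into two pieces, one of which has odd length strictly less than $2p+1$, triggering the induction hypothesis. Your approach instead performs the full factorization over all classes $m\in M$ at once and then applies the parity argument to locate a single unbalanced factor. This is a genuine simplification: the induction in the paper is really just a device to arrive at the same factorization, and your direct route makes clearer the general principle you state at the end, namely that the expectation vanishes whenever some $a_m\neq b_m$.
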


\begin{proof}We proceed by induction, if $p=0$ then by the invariances satisfied by the law of $g_n$, we have $-E(g_n) = E(e^{i\pi} g) = E(g) = 0$.

For bigger $p$, we consider the sets
$$
A_1 = \lbrace i \; |\; n_i=n_1\rbrace \; A_2 = \lbrace i \; |\; n_i = -n_1\rbrace \; , \; A_{3} = \lbrace i \; |\; |n_i| \neq |n_1| \rbrace \; .
$$
If $A_3$ is empty, we use the invariance of the law, that $g_{-n} = \overline g_n$ and the fact that the difference between the cardinals $m_1$ and $m_2$ of $A_1$ and $A_2$, as their sum is odd, can not be $0$, to have :
$$
E\Big( \prod g_{n_i}\Big) = E\Big(g_{n_1}^{m_1}\overline g_{n_1}^{m_2}\Big) = e^{i\theta (m_1 - m_2)}E\Big(g_{n_1}^{m_1}\overline g_{n_1}^{m_2}\Big) = 0\; .
$$
If $A_3$ is not empty, thanks to the independence, we have
$$
E\Big( \prod g_{n_i}\Big) =E\Big( \prod_{i\in A_1\cup A_2} g_{n_i}\Big) E\Big(\prod_{i\in A_3}g_{n_i}\Big)\; .
$$
As either the cardinal of $A_1\cup A_2$ or the one of $A_3$ is odd and strictly less than $2p+1$, we use the induction hypothesis to conclude the proof. \end{proof}

We now compute $A^2_{n,m}(t)$. It involves products of 1 $b$ and 2 $a$. Thus it can be written
$$
A^2_{n,m}(t) =  e^{i(\omega_m -\omega_n)t}   \frac{im_1}{2} \sum_{k+l = m}\Big( E(a_n \overline{a_k b_l})+ E(a_n \overline{b_k a_l}) + E(b_n \overline{a_k a_l})\Big) \; ,
$$
We call $m_2 = E(|g_n|^2)$ and $m_4 = E(|g_n|^4)$.

Let us compute 
$$
E(a_n \overline{a_k b_l})\; 
$$
when $k+l=m$. We replace $b_l$ by its value, that is
$$
b_l= -\frac{l_1}{2} \sum_{j+q=l} e^{i\omega_l t }\frac{e^{i\Delta_l^{j,q}t}-1}{\Delta_l^{j,q}} \lambda_j\lambda_q g_j g_q\; ,
$$
which gives
$$
 E(a_n \overline{a_k b_l}) = -\frac{l_1}{2} \sum_{j+q=l}e^{-i\omega_l t }\frac{e^{-i\Delta_l^{j,q}t}-1}{\Delta_l^{j,q}} e^{i\omega_n t}e^{-i\omega_k t} \overline{\lambda_j\lambda_q \lambda_k} \lambda_n E(g_n \overline{g_k g_j g_q}) \; .
$$
For $E(g_n \overline{g_k g_j g_q})$ not to be $0$, we have to pair the indexes, otherwise the invariance by rotation and independence make it null. Indeed, we have the following lemma.

\begin{lemma}Let $(n_1,\hdots,n_4) \in \Z^*\times \Z$. We have :
$$
E\Big( \prod_{i=4}^4 g_{n_i} \Big) = \left \lbrace{ \begin{tabular}{lll}
$E(|g_n|^4)$ & \mbox{ if } $\exists \sigma \in S^4$ \mbox{ such that } $n_{\sigma(1)}=n_{\sigma(2)} = - n_{\sigma(3)} = -n_{\sigma(4)}$ \\
$E(|g_n|^2)^2$ & \mbox{ if } $\exists \sigma  \in S^4$ \mbox{ such that } $n_{\sigma(1)} = -n_{\sigma(3)}$ \mbox{ and } $n_\sigma(2) = -n_{\sigma(4)}$\\
$0$ & \mbox{ otherwise } \end{tabular}}\right.
$$
where $S^4$ is the set of permutations of $\{1,2,3,4\}$.
\end{lemma}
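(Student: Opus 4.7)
The plan is to exploit the two structural hypotheses on the $g_n$: independence of the $g_n$ for $n_1 > 0$, together with rotational invariance of the common law, and the reality constraint $g_{-n} = \overline{g_n}$. First I would reduce every factor $g_{n_i}$ to the ``positive half-plane'': for each $i$, set $\tilde n_i = n_i$ if the first coordinate of $n_i$ is positive, and $\tilde n_i = -n_i$ otherwise. Then $g_{n_i}$ equals either $g_{\tilde n_i}$ or $\overline{g_{\tilde n_i}}$, and the product $\prod_{i=1}^4 g_{n_i}$ rewrites as a product of powers of $g_{\tilde n_i}$ and $\overline{g_{\tilde n_i}}$ indexed by the distinct values taken by $\tilde n_i$.

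By independence, the expectation of this product factors over the groups of indices sharing a common value of $\tilde n_i$. Each factor has the form $E(g^{a_k}\overline{g}^{b_k})$ for non-negative integers $a_k, b_k$ whose sum is the cardinal of the $k$-th group, where $g$ denotes a variable with the common law. Rotational invariance of the law of $g$ yields $E(g^{a_k}\overline{g}^{b_k}) = e^{i\theta(a_k-b_k)} E(g^{a_k}\overline{g}^{b_k})$ for every $\theta \in [0,2\pi]$, so this factor vanishes unless $a_k = b_k$.

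The non-vanishing case therefore forces $a_k = b_k$ for every $k$, and summing over groups gives $\sum_k (a_k+b_k) = 4$, hence $\sum_k a_k = 2$. Only two combinatorial configurations remain: either a single group with $a=b=2$, in which case all four $|n_i|$ coincide and split into two occurrences of $n$ and two of $-n$, producing $E(|g_n|^4)$; or two groups each with $a=b=1$, in which case the four indices pair up into two pairs of opposite vectors, producing $E(|g_n|^2)^2$. These two configurations match exactly the two existence-of-permutation alternatives in the statement, and any other index configuration falls into one of the vanishing cases.

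The step that deserves the most care is not the vanishing argument itself but the reading of the case distinction: the first alternative is a degenerate sub-case of the second (taking $\sigma$ itself witnesses both pairings), so the statement must be interpreted as a priority list, the top case $E(|g_n|^4)$ taking precedence over the generic $E(|g_n|^2)^2$ when all indices share the same absolute value. Making this precedence explicit, and checking that the permutation groups listed indeed exhaust the configurations produced by the group partition above, is the only non-mechanical part of the argument.
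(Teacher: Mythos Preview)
Your argument is correct and follows essentially the same route as the paper: factor the expectation by independence over the distinct values of $|n_i|$ (your $\tilde n_i$, the paper's set $A=\{|n_i|\}$), then use rotational invariance to force each factor $E(g^{a_k}\overline g^{\,b_k})$ to vanish unless $a_k=b_k$. The only cosmetic difference is that the paper organises the analysis by the cardinality of $A$ (treating $|A|=4,3,2,1$ in turn), whereas you go straight to the constraint $a_k=b_k$ with $\sum_k(a_k+b_k)=4$ and read off the two surviving configurations; your observation about the precedence between the two non-zero cases is a useful clarification that the paper leaves implicit.
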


\begin{proof} Let $A = \{ |n_i| \; |\; i=1, \hdots, 4\}$ and for all $n \in A$, let $m_1(n)$ be the cardinal of the set $\{i\; |\; n_i = n\}$ and $m_2(n)$ be the cardinal of $\{i\; |\; n_i = -n\}$. 

If $A$ has $4$ elements, then for all $n\in A$, $m_1(n) + m_1(n) = 1$, hence by independence and invariance by rotation
$$
E\Big( \prod_{i=4}^4 g_{n_i} \Big) =  \prod_{i=4}^4 E\Big(g_{n_i} \Big) =0 \; .
$$

If $A$ has $3$ elements, then there exists $n\in A$ such that $m_1(n) + m_2(n) = 0$. Calling $i_0$ the unique index such that $n_{i_0}= \pm n$, we get
$$
E\Big( \prod_{i=4}^4 g_{n_i} \Big) =E(g_{\pm n_{i_0}}) E\Big(\prod_{i\neq i_0} g_{n_i}\Big)=0\; .
$$

If $A$ has $2$ elements, that is $A = \{n,\overline n\}$, there is a first case : $m_1(n) = m_2(n) = m_1(\overline n )=m_2(\overline n)$. This is equivalent to the existence of $\sigma  \in S^4$ such that $n_{\sigma(1)} = -n_\sigma(3) = n$ and $n_{\sigma(2)} = -n_{\sigma(4)} = \overline n$. And we have $n \neq \overline n$. In this case,
$$
E\Big( \prod_{i=4}^4 g_{n_i} \Big) = E(|g_n|^2) E(|g_{\overline n}) = E(|g_n|^2)^2 \; .
$$
In the other case, we have
\begin{eqnarray*}
E\Big( \prod_{i=4}^4 g_{n_i} \Big) & = & E(g_n^{m_1(n)}\overline{g_n}^{m_2(n)})E(g_{\overline n}^{m_1(\overline n)}\overline{g_{ \overline n}}^{m_2(\overline n)}) \\
& = & e^{i\theta (m_1(n) - m_2(n))}e^{i\overline \theta (m_1 (\overline n )-m_2(\overline n)) }  E(g_n^{m_1(n)}\overline{g_n}^{m_2(n)})E(g_{\overline n }^{m_1(\overline n)}\overline{g_{ \overline n}}^{m_2(\overline n)}) \\
 &= & 0
\end{eqnarray*}
for all $\theta, \overline \theta$.

If $A$ has only one element, $A=\{n\}$, there is a first case $m_1(n) = m_2(n) = 2$. This is equivalent to the existence of $\sigma  \in S^4$ such that $n_{\sigma(1)} = -n_\sigma(3) =n_{\sigma(2)} = -n_{\sigma(4)} =  n$.  In this case,
$$
E\Big( \prod_{i=4}^4 g_{n_i} \Big) = E(|g_n|^4)\; .
$$
Otherwise, we have
$$
E\Big( \prod_{i=4}^4 g_{n_i} \Big) = E(g_n^{m_1(n)}\overline{g_n}^{m_2(n)}) = e^{i\theta (m_1(n) - m_2(n))}E(g_n^{m_1(n)}\overline{g_n}^{m_2(n)}) =0 \; .
$$ \end{proof}

 We can not pair $n$ with $k$, or $l$ would be $(0,0)$ so we can only pair $n$ with either $j$ or $q$ and $-k$ with the other one. This is possible if and only if $j+q = n-k$, that is, $m=k+l=n$. By symmetry of $j$ and $q$, there is two solutions when $j\neq q$, which is equivalent to $n\neq -k$. In this case, $E(|g_n|^2|g_k|^2) = m_2^2$. When $k=-n$, there are only one solution for $j$ and $q$, $l= 2n$, and $E(|g_n|^2|g_k|^2) = m_4$. We have :
$$
E(a_n \overline{a_k b_l}) = \left \lbrace{\begin{tabular}{ll}
$-l_1 \delta_n^m e^{-i\omega_l t }\frac{e^{-i\Delta_l^{n,-k}t}-1}{\Delta_l^{n,-k}} e^{i\omega_n t}e^{-i\omega_k t}|\lambda_n|^2|\lambda_k|^2 m_2^2 $ & \mbox{ when } $k\neq -n$ \\
$- \frac{l_1}{2} \delta_n^m e^{-i\omega_l t }\frac{e^{-i\Delta_l^{n,-k}t}-1}{\Delta_l^{n,-k}} e^{i\omega_n t}e^{-i\omega_k t}|\lambda_n|^2|\lambda_k|^2m_4 $ & \mbox{ otherwise.} \end{tabular}} \right.
$$
which can be rewritten as, using that $\omega_{-k} = - \omega_k$, and thus $\Delta_l^{n,-k} = - \Delta_n^{k,l}$,
\begin{equation}\label{sum-1}
E(a_n \overline{a_k b_l}) = l_1\delta_n^m \frac{1 -e^{-i\Delta_n^{k,l}t}}{\Delta_n^{k,l}} |\lambda_n|^2|\lambda_k|^2 m_2^2 + \delta_n^m\delta_{k+n}^0n_1 (m_4-2m_2^2) \frac{1 -e^{-i\Delta_n^{-n,2n}t}}{\Delta_n^{-n,2n}} |\lambda_n|^4\; .
\end{equation}
In the same way, we have,
\begin{equation}\label{sum-2}
E(a_n \overline{a_l b_k}) = k_1\delta_n^m \frac{1 -e^{-i\Delta_n^{k,l}t}}{\Delta_n^{k,l}} |\lambda_n|^2|\lambda_l|^2 m_2^2 +\delta_n^m\delta_{l+n}^0 n_1 (m_4-2m_2) \frac{1 -e^{-i\Delta_n^{-n,2n}t}}{\Delta_n^{-n,2n}} |\lambda_n|^4\; .
\end{equation}

Let us compute 
$$
E(b_n \overline{a_k a_l})\; .
$$
By replacing $b_n$ by its value, we have : 
$$
E(b_n \overline{a_k a_l}) = -\frac{n_1}{2} \sum_{j+q=n} e^{i\omega_n t }\frac{e^{i\Delta_n^{j,q}t}-1}{\Delta_n^{k,l}} e^{-i\omega_k t }e^{-i\omega_l t}\lambda_j\lambda_q \overline{\lambda_k\lambda_l}E(g_j g_q \overline{g_kg_l})\; .
$$
To get a non zero mean value, we need to couple $j$ with either $k$ or $l$ and $q$ with the other one. This is possible if and only if $n=j+q =k+l = m$. If $n_1$ and $n_2$ are even and $k=l=n/2$ then there is only one solution for $j$ and $q$, and  $E(|g_k|^2|g_l|^2) = m_4$. Otherwise, by symmetry, there are 2 solutions, which gives : 
\begin{equation}\label{sum-3}
E(b_n \overline{a_k a_l}) = -n_1 \delta_n^m \frac{1 - e^{i\Delta_n^{k,l}t}}{\Delta_n^{k,l}} |\lambda_k|^2|\lambda_l|^2 m_2^2 - \delta_n^m \delta_{n/2 \in \N^2} (m_4-2m_2^2) \frac{n_1}{2} \frac{1 - e^{i\Delta_n^{n/2,n/2}t}}{\Delta_n^{n/2,n/2}} |\lambda_k|^4\; .
\end{equation}

Summing up \eqref{sum-1}, \eqref{sum-2}, and \eqref{sum-3}, we get an expression of $A_{n,m}^2(t)$ : 
\begin{eqnarray*}
A_{n,m}^2(t) & = & \delta_n^m m_2^2 \frac{in_1}{2} \sum_{k +l = n} \frac{1 -e^{-i\Delta_n^{k,l}t}}{\Delta_n^{k,l}} \Big( k_1 |\lambda_n|^2|\lambda_l|^2 + l_1 |\lambda_n|^2|\lambda_k|^2 - n_1 |\lambda_k|^2|\lambda_l|^2\Big) + \\
 & &\delta_n^m (m_4-2 m_2^2) \frac{in_1}{2}\Big( 2n_1\frac{1 -e^{-i\Delta_n^{-n,2n}t}}{\Delta_n^{-n,2n}} |\lambda_n|^4 - \delta_{n/2 \in \N^2}\frac{n_1}{2} \frac{1 - e^{i\Delta_n^{n/2,n/2}t}}{\Delta_n^{n/2,n/2}} |\lambda_{n/2}|^4\Big) \; .\end{eqnarray*}
Taking twice the real part of $A^2_{n,n}(t)$ gives the expression of $G_n(t)$.

For the bound on $R$, we remark that 
$$
|m_1 \sum_{k+l=m} E(\alpha_k \beta_l \gamma_n) | \leq 2 |n|^{-s} E\left( \|\alpha\|_{H^s}\|\beta\|_{H^s}\|\gamma\|_{H^s} \right) \leq C|n|^{-s}\|\alpha\|_{L^\infty,H^s}\|\beta\|_{L^\infty,H^s}\|\gamma\|_{L^\infty,H^s} \; .
$$
Hence, using the table, we get that
\begin{eqnarray*}
|A^4_{n,m} (t)| & \lesssim & |n|^{-s}\Big( \|a\|_{L^\infty,H^s}^2\|d\|_{L^\infty,H^s}+\|a\|_{L^\infty,H^s}\|b\|_{L^\infty,H^s}\|c\|_{L^\infty,H^s}+\|b\|_{L^\infty,H^s}^3\Big)\\
|A^5_{n,m} (t)| & \lesssim & |n|^{-s}\Big( \|a\|_{L^\infty,H^s}\|b\|_{L^\infty,H^s}\|d\|_{L^\infty,H^s}+\|a\|_{L^\infty,H^s}\|c\|_{L^\infty,H^s}^2+\|b\|_{L^\infty,H^s}^2\|c\|_{L^\infty,H^s}\Big)\\
|A^6_{n,m} (t)| & \lesssim & |n|^{-s}\Big( \|a\|_{L^\infty,H^s}\|c\|_{L^\infty,H^s}\|d\|_{L^\infty,H^s}+\|b\|_{L^\infty,H^s}^2\|d\|_{L^\infty,H^s}+\|b\|_{L^\infty,H^s}\|c\|_{L^\infty,H^s}^2\Big)\\
|A^7_{n,m} (t)| & \lesssim & |n|^{-s}\Big( \|a\|_{L^\infty,H^s}\|d\|_{L^\infty,H^s}^2+\|b\|_{L^\infty,H^s}\|c\|_{L^\infty,H^s}\|d\|_{L^\infty,H^s}+\|c\|_{L^\infty,H^s}^3\Big)\\
|A^8_{n,m} (t)| & \lesssim & |n|^{-s}\Big( \|b\|_{L^\infty,H^s}\|d\|_{L^\infty,H^s}^2+\|c\|_{L^\infty,H^s}^2\|d\|_{L^\infty,H^s}\Big)\\
|A^9_{n,m} (t)| & \lesssim & |n|^{-s} \|c\|_{L^\infty,H^s}\|d\|_{L^\infty,H^s}^2\\
|A^{10}_{n,m} (t)| & \lesssim & |n|^{-s}\|d\|_{L^\infty,H^s}^3
\end{eqnarray*}
and use the symmetry to bound $A^j_{m,n}$ and then $R$.
\end{proof}

\subsection{Expansion of the moments of order 3}\label{sub-momthr}

We sum up here the computations related to the development of $E(u_nu_mu_p)$. 

In this subsection, we expand $u$ only up to order 2 hence, we write $e= c+ \varepsilon d$ and $u = a+ \varepsilon b + \varepsilon^2 e$. In the sequel, the norm of $e$ will be bounded by $\|c\| + \varepsilon \|d\|$.

\begin{lemma}The expansion of $\partial_t E(e^{-i(\omega_n + \omega_m + \omega_p)t}u_nu_mu_p)$ is written : 
$$
\partial_t E(e^{-i(\omega_n + \omega_m + \omega_p)t}u_nu_mu_p) =\varepsilon  \delta_{n+m+p}^0  H_{n,m,p}(t) + \varepsilon^3 R(\varepsilon, n,m,p,t)
$$
with
\begin{eqnarray*}
H_{n,m,p}(t)  & = &  ie^{-i(\omega_n + \omega_m + \omega_p)t} \left( E(|g_n|^2) \Big( n_1 |\lambda_m|^2|\lambda_p|^2 + m_1 |\lambda_p|^2|\lambda_n|^2 + p_1 |\lambda_n|^2|\lambda_m|^2 \Big)+ \right. \\
 & & \frac{1}{2}\left. (E(|g_n|^4 - 2 E(|g_n|^2)^2) \Big( \delta_m^p n_1 |\lambda_m|^4 + \delta_p^n m_1 |\lambda_p|^4 + \delta_n^m p_1 |\lambda_n|^4 \Big) \right)
\end{eqnarray*}
and
\begin{eqnarray*}
R(\varepsilon,n,m,p,t) & \leq & C\max( |m|^{-s}|p|^{-s},|p|^{-s}|n|^{-s},|n|^{-s}|m|^{-s})  \\
 & & \Big( \; \Big( \|a\|_{L^\infty,H^s}^2\|b\|_{L^\infty,H^s}^2+\|a\|_{L^\infty,H^s}^3\|e\|_{L^\infty,H^s}\Big) +  \\
 & & \varepsilon \Big( \|a\|_{L^\infty,H^s}\|b\|_{L^\infty,H^s}^3+\|a\|_{L^\infty,H^s}^2\|b\|_{L^\infty,H^s}\|e\|_{L^\infty,H^s}\Big)  +  \\
 & & \varepsilon^2 \Big(\|b\|_{L^\infty,H^s}^4+ \|a\|_{L^\infty,H^s}\|b\|_{L^\infty,H^s}^2\|e\|_{L^\infty,H^s}+\|a\|_{L^\infty,H^s}^2\|e\|_{L^\infty,H^s}^2\Big) +  \\
 & & \varepsilon^3 \Big( \|b\|_{L^\infty,H^s}^3\|e\|_{L^\infty,H^s}^1+\|a\|_{L^\infty,H^s}\|b\|_{L^\infty,H^s}\|e\|_{L^\infty,H^s}^2\Big) +  \\
 & & \varepsilon^4 \Big( \|b\|_{L^\infty,H^s}^2\|e\|_{L^\infty,H^s}^2+\|a\|_{L^\infty,H^s}\|e\|_{L^\infty,H^s}^3\Big) +  \\
 & &  \varepsilon^5 \|b\|_{L^\infty,H^s}\|e\|_{L^\infty,H^s}^3  + \\
 & &  \varepsilon^6 \|e\|_{L^\infty,H^s}^4 \Big) \; .
\end{eqnarray*}
\end{lemma}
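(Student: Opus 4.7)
The plan mirrors the proof of the order-2 lemma just completed. I first differentiate the quantity inside the expectation, replacing each $\partial_t u_q$ by $i\omega_q u_q - \frac{i\varepsilon q_1}{2}\sum_{k+l=q} u_k u_l$ coming from KP-II; the linear pieces cancel against the derivative of the phase prefactor, leaving
\begin{equation*}
\partial_t\big(e^{-i(\omega_n+\omega_m+\omega_p)t} u_n u_m u_p\big) = -\frac{i\varepsilon}{2} e^{-i(\omega_n+\omega_m+\omega_p)t}\Big( n_1 \sum_{k+l=n} u_k u_l u_m u_p + m_1 \sum_{k+l=m} u_k u_l u_n u_p + p_1 \sum_{k+l=p} u_k u_l u_n u_m\Big).
\end{equation*}
I denote the three symmetric pieces, after taking expectation, by $B^{(1)}_{n,m,p}$, $B^{(2)}_{n,m,p}$, $B^{(3)}_{n,m,p}$, and substitute $u=a+\varepsilon b+\varepsilon^2 e$ in each of the four factors, sorting by order in $\varepsilon$.

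For the leading $\varepsilon^1$ contribution I replace every $u$ by $a$. Using $a_q = e^{i\omega_q t}\lambda_q g_q$ together with the four-$g$ moment lemma established in the order-2 proof, the expectation vanishes unless the four indices $\{k,l,m,p\}$ can be paired by negation, which forces $n+m+p=0$. The pairing $k=-l$ is excluded, since it would require $n=0$, so only $(k,l)=(-m,-p)$ and $(-p,-m)$ remain; in both, the phases $e^{i(\omega_k+\omega_l+\omega_m+\omega_p)t}$ collapse to $1$. Each generic pairing contributes $|\lambda_m|^2|\lambda_p|^2 E(|g_n|^2)^2$; when $m=p$ (respectively $n=p$ or $n=m$ in the other two terms) these two pairings merge into one, yielding $|\lambda_m|^4 E(|g_n|^4)$ and producing the correction $\tfrac{1}{2}(E(|g_n|^4)-2 E(|g_n|^2)^2)|\lambda_m|^4$. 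Adding $B^{(1)}+B^{(2)}+B^{(3)}$ at this order reproduces the stated $\varepsilon\,\delta_{n+m+p}^{0}H_{n,m,p}(t)$.

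The next order $\varepsilon^2$ comes from monomials with exactly one $b$ and three $a$'s, hence five occurrences of $g$, and vanishes by the odd-moment lemma. All contributions of order $\varepsilon^{j}$ with $j\ge 3$ are absorbed into $\varepsilon^3 R$. I catalog them by the number of $b$'s and $e$'s summing with $a$'s to four, as in the table of the order-2 proof, and estimate each term through the quadrilinear bound
\begin{equation*}
\Big| m_1 \sum_{k+l=m} E(\alpha_k\beta_l\gamma_n\delta_p) \Big| \lesssim |n|^{-s}|p|^{-s}\|\alpha\|_{L^\infty,H^s}\|\beta\|_{L^\infty,H^s}\|\gamma\|_{L^\infty,H^s}\|\delta\|_{L^\infty,H^s},
\end{equation*}
which follows from $|m_1|\le|k|+|l|$, Cauchy--Schwarz on $\Omega$, and the algebra property of $H^s(\T^2)$ for $s>1$. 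The three symmetric roles played by $n$, $m$, $p$ produce the $\max$ over the three weight products $|n|^{-s}|m|^{-s}$, $|m|^{-s}|p|^{-s}$, $|n|^{-s}|p|^{-s}$ appearing in the statement, and re-pairing each monomial of $(a,b,e)^{\otimes 4}$ with its $\varepsilon$-power yields precisely the displayed sum of norm products.

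The main obstacle is the combinatorial bookkeeping, which is heavier than in the order-2 case because there are now four factors to expand: I must assign to every monomial in $(a,b,e)$ its $\varepsilon$-power and its norm bound, and then collect everything into exactly the shape displayed in the statement. The degenerate pairings in the leading computation---when two of $n,m,p$ coincide, or when $n=m=p$---must also be treated separately so that the merging pairings are not double-counted and so that the $E(|g_n|^4)-2E(|g_n|^2)^2$ correction appears with the correct combinatorial factor $\tfrac{1}{2}$.
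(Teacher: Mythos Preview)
Your proposal is correct and follows essentially the same route as the paper: decompose the time derivative into three cyclically symmetric pieces $A_{n,m,p}+A_{m,p,n}+A_{p,n,m}$ coming from the nonlinearity, expand each $u$ as $a+\varepsilon b+\varepsilon^2 e$, compute the $\varepsilon^1$ contribution by the four-$g$ pairing lemma (forcing $n+m+p=0$ and producing the $E(|g|^4)-2E(|g|^2)^2$ correction when two indices coincide), kill the $\varepsilon^2$ term by the odd-moment lemma, and bound every remaining monomial by the quadrilinear estimate $|n_1\sum_{k+l=n}E(\alpha_k\beta_l\gamma_m\eta_p)|\lesssim |m|^{-s}|p|^{-s}\prod\|\cdot\|_{L^\infty,H^s}$, with the cyclic symmetry giving the $\max$ over the three weight products. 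The paper organises the higher-order bookkeeping via the same occurrence table you describe.
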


\begin{proof} We start by writing $\partial_t E(e^{-i(\omega_n + \omega_m + \omega_p)t}u_nu_mu_p)$ as
$$
\partial_t E(e^{-i(\omega_n + \omega_m + \omega_p)t}u_nu_mu_p) = A_{n,m,p}(t) +  A_{m,p,n}(t) + A_{p,n,m}(t)
$$
with
$$
A_{n,m,p}(t) = E(\partial_t(e^{-i\omega_n t}u_n) e^{-i(\omega_m+\omega_p)t}u_mu_p)
$$
as in the previous subsection.

By replacing $\partial_t(e^{-i\omega_n t}u_n)$ by its expression we get :
$$
A_{n,m,p}(t) = e^{-i(\omega_n+\omega_m+\omega_p)t}\varepsilon \frac{in_1}{2}\sum_{k+l=n}E(u_ku_lu_mu_p)\; .
$$

We then expand $A_{n,m,p}(t)$ in $\varepsilon$ as
$$
A_{n,m,p}(t)  = \sum_{j=1}^9 \varepsilon^j A_{n,m,p}^j(t)
$$
where $A^j_{n,m,p}(t)$ is of the form
$$
e^{-i(\omega_n+\omega_m+\omega_p)t}\frac{in_1}{2}\sum_{k+l=n}E(\alpha_k\beta_l\gamma_m \eta_p)
$$
with $\alpha,\beta,\gamma,\eta$ replaced by $a$, $b$, or $e$ and $j$ is equal to 1 plus twice the number of occurrences of $e$ plus the number of occurrences of $b$.

Let us compute $A^1_{n,m,p}(t)$. In $A^1$, $\alpha = \beta= \gamma = \eta =a$ hence
$$
A^1_{n,m,p}(t) = e^{i\Delta_n^{k,l}} \frac{in_1}{2} \sum_{k+l = n} \lambda_k\lambda_l\lambda_m\lambda_p E(g_kg_lg_mg_p) \; .
$$
For the mean value not to be $0$, we need to pair $k$ with $-m$ or $-p$ and $l$ with the other one. This is possible if and only if $n+m+p = 0$. If $m=p$ then there is only one solution, otherwise there are 2, which yields
$$
A^1_{n,m,p}(t) = \left \lbrace{\begin{tabular}{ll}
$in_1 e^{-i(\omega_n + \omega_m + \omega_p)t} |\lambda_m|^2|\lambda_p|^2E(|g_n|^2)$ & \mbox{ if }$m\neq p, n+m+p=0$\\
$i\frac{n_1}{2} e^{-i(\omega_n + \omega_m + \omega_p)t} |\lambda_m|^4 E(|g_n|^4)$ & \mbox{ if } $m=p,n+m+p=0$\end{tabular}}\right. 
$$
We can sum this up as : 
$$
A_{n,m,p}(t) = \delta_{n+m+p}^0  e^{-i(\omega_n + \omega_m + \omega_p)t} \left( in_1 |\lambda_m|^2|\lambda_p|^2E(|g_n|^2) + (E(|g_n|^4)-2E(|g_n|^2)^2) \delta_m^p \frac{in_1}{2} |\lambda_m|^4\right) \; .
$$
By summing over the cyclic permutations over $n,m,p$ we get the result.

Let us compute $A^2$. This part of the development is obtained by replacing either $\alpha,\beta,\gamma$, or $\eta$ by $b$ and the other ones by $a$. By replacing $b$ by its expression in terms of $a$, we get that $A^2$ is a sum which involves products of 5 occurrences of $a$, which means that we have to take the mean value of a product of 5 $g$s. But, since $5$ is odd, the $g_n$ are independent from each other and their law is invariant by rotation, any product of 5 $g$ has a null mean value. In the end, we have that :
$$
A^2_{n,m,p}(t) = 0 \; .
$$

Let us bound $A^j_{n,m,p}(t)$ for $j\geq 3$. We remark that the sum
$$
e^{-i(\omega_n+\omega_m+\omega_p)t}\frac{in_1}{2}\sum_{k+l=n}E(\alpha_k\beta_l\gamma_m \eta_p)
$$
is bounded by
$$
C |m|^{-s}|p|^{-s}E\Big( \|\alpha\|_{H^s}\|\beta\|_{H^s}\|\gamma\|_{H^s}\|\eta\|_{H^s} \Big) \leq C |m|^{-s}|p|^{-s}\|\alpha\|_{L^\infty,H^s}\|\beta\|_{_{L^\infty,H^s}}\|\gamma\|_{L^\infty,H^s}\|\eta\|_{L^\infty,H^s}
$$
where the $L^\infty$ norm corresponds to the probability space $\Omega,\mathcal A,\mathbb P$.

The following table gives $j$ the order in $\varepsilon$ in function of the number of occurrences of $a$, $b$, and $e$.

\begin{tabular}{c|c|c|c}
Occurrences of $a$ & Occurrences of $b$ & Occurrences of $e$ & Order in $\varepsilon$ \\ \hline
$0$ & $0$ & $4$ & $9$ \\ \hline
$0$ & $1$ & $3$ & $8$ \\ \hline
$0$ & $2$ & $2$ & $7$ \\ \hline
$0$ & $3$ & $1$ & $6$ \\ \hline
$0$ & $4$ & $0$ & $5$ \\ \hline
$1$ & $0$ & $3$ & $7$ \\ \hline
$1$ & $1$ & $2$ & $6$ \\ \hline
$1$ & $2$ & $1$ & $5$ \\ \hline
$1$ & $3$ & $0$ & $4$ \\ \hline
$2$ & $0$ & $2$ & $5$ \\ \hline
$2$ & $1$ & $1$ & $4$ \\ \hline
$2$ & $2$ & $0$ & $3$ \\ \hline
$3$ & $0$ & $1$ & $3$ \\ \hline
$3$ & $1$ & $0$ & $2$ \\ \hline
$4$ & $0$ & $0$ & $1$ \end{tabular}

In $A^3$, there are 2 $b$ and 2 $a$ or 1 $e$ and 3 $a$, hence
$$
|A^3_{n,m,p}(t)|\leq C |m|^{-s}|p|^{-s} \Big( \|b\|_{L^\infty,H^s}^2\|a\|_{L^\infty,H^s}^2+\|e\|_{L^\infty,H^s}\|a\|_{L^\infty,H^s}^3\Big)\; .
$$

In $A^4$, there are 3 $b$ and 1 $a$ or 1 $e$, 1 $b$ and 2 $a$, hence
$$
|A^4_{n,m,p}(t)|\leq C|m|^{-s}|p|^{-s} \Big( \|b\|_{L^\infty,H^s}^3\|a\|_{L^\infty,H^s}+\|e\|_{L^\infty,H^s}\|a\|_{L^\infty,H^s}^2\|b\|_{L^\infty,H^s}\Big)\; .
$$

In $A^5$, there are 4 $b$, or 1 $a$, 2 $b$, 1 $e$, or 2 $a$, 2 $e$  hence
$$
|A^5_{n,m,p}(t)|\leq C |m|^{-s}|p|^{-s}\Big(\|b\|_{L^\infty,H^s}^4+ \|b\|_{L^\infty,H^s}^2\|a\|_{L^\infty,H^s}\|e\|_{L^\infty,H^s}+\|e\|_{L^\infty,H^s}^2\|a\|_{L^\infty,H^s}^2\Big)\; .
$$

In $A^6$, there are 3 $b$ and 1 $e$ or 1 $a$, 1 $b$ and 2 $e$, hence
$$
|A^6_{n,m,p}(t)|\leq C |m|^{-s}|p|^{-s}\Big( \|b\|_{L^\infty,H^s}^3\|e\|_{L^\infty,H^s}+\|a\|_{L^\infty,H^s}\|e\|_{L^\infty,H^s}^2\|b\|_{L^\infty,H^s}\Big)\; .
$$

In $A^7$, there are 2 $b$ and 2 $e$ or 1 $a$ and 3 $e$, hence
$$
|A^7_{n,m,p}(t)|\leq C |m|^{-s}|p|^{-s}\Big( \|b\|_{L^\infty,H^s}^2\|e\|_{L^\infty,H^s}^2+\|a\|_{L^\infty,H^s}\|e\|_{L^\infty,H^s}^3\Big)\; .
$$

In $A^8$, there are 1 $b$ and 3 $e$, hence
$$
|A^8_{n,m,p}(t)|\leq C |m|^{-s}|p|^{-s}\|b\|_{L^\infty,H^s}\|e\|_{L^\infty,H^s}^3\; .
$$

In $A^9$, there are 4 $e$, hence
$$
|A^9_{n,m,p}(t)|\leq C |m|^{-s}|p|^{-s}\|e\|_{L^\infty,H^s}^4\; .
$$

We then remark that $A^j_{n,m,p}(t)$ is bounded by a constant $A^j(t)$ independent form $n$,$m$, or $p$, times $|m|^{-s}|p|^{-s}$ to get
$$
|A^j_{n,m,p}(t)| + |A^j_{m,p,n}(t)| + |A^j_{p,n,m}(t)| \leq A^j(t) \max(|m|^{-s}|p|^{-s}, |p|^{-s}|n|^{-s}, |n|^{-s}|m|^{-s}) 
$$
and then we sum over $j$ to get the bound on $R$.
\end{proof}

\section{Normal forms}\label{sec-norfor}

In this section, we first write the expansion of the solution of KP-II in terms of normal forms, the second subsection is dedicated to proving that for suitable times and small enough non linearities, we can retrieve $d$ in terms of its normal form version, the third one deals with bounds on $d$ (this is where we perform the contraction argument).

\subsection{Rewriting KP-II using multi-linear maps}\label{sub-rewmul}
In this subsection, we modify KP-II in order to bound $d(t)$ in $L^\infty, H^s(\T^2)$, $s>1$.

First, with the definition of $d$ as $\frac{u -(a+\varepsilon b + \varepsilon^2 c)}{\varepsilon^3} $, with $a = U(t)u_0$, $b$ the first Picard interaction, and $c$ the second, we recall that $u$ solves KP-II with initial datum $u_0$ if and only if $d$ solves \eqref{eqond}, which we recall here,
$$
\partial_t d - Ld + \frac{1}{2}\partial_x \Big( b^2 + 2ac + \varepsilon (2ad + 2bc) + \varepsilon^2 (2bd+c^2) + 2\varepsilon^3 cd+\varepsilon^4 d^2 \Big) = 0
$$
with initial datum $0$.

\begin{definition} We denote by $S$ the bilinear map defined in terms of Fourier coefficients by : 
$$
S(u,v)_n = \frac{in_1}{2} \sum_{k+l=n}\frac{u_kv_l}{i\Delta_n^{k,l}}\; .
$$
\end{definition}

\begin{proposition}If $u$ solves KP-II then $v$ defined as
$$
v= u+\varepsilon S(u,u)
$$
solves 
$$
\partial_t v - L v  = \varepsilon^2 F(u,u,u)
$$
with $F$ a trilinear map defined in terms of its Fourier coefficients as : 
$$
F(\alpha,\beta,\gamma)_n= \frac{n_1}{2}\sum_{j+k+l=n} \frac{j_1+k_1}{i\Delta_n^{j+k,l}} \alpha_j \beta_k \gamma_l 
$$
and with initial datum $v(t=0) = u_0 + \varepsilon S(u_0,u_0)$.
\end{proposition}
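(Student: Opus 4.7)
The strategy is the direct computation, in Fourier variables, of $\partial_t v - Lv$ using the KP-II equation satisfied by $u$. The map $S$ is designed precisely so that its $-L$ derivative cancels the quadratic nonlinearity in KP-II, leaving only a cubic remainder of order $\varepsilon^{2}$; this is the Birkhoff normal form mechanism, with the denominator $\Delta_{n}^{k,l}=\omega_{k}+\omega_{l}-\omega_{n}$ being exactly the small divisor allowed by the absence of three-wave resonances (the previous proposition guarantees $|\Delta_{n}^{k,l}|\geq 3|n_{1}k_{1}l_{1}|\neq 0$, so $S$ is well-defined).

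First I would rewrite KP-II coefficient by coefficient: $\partial_{t}u_{n}-i\omega_{n}u_{n}=-\varepsilon\frac{in_{1}}{2}\sum_{k+l=n}u_{k}u_{l}$. Setting $T_{n}:=S(u,u)_{n}=\frac{n_{1}}{2}\sum_{k+l=n}\frac{u_{k}u_{l}}{\Delta_{n}^{k,l}}$, I differentiate in time and use the symmetry $k\leftrightarrow l$ (which is legal because $\Delta_{n}^{k,l}$ is symmetric) together with the identity $i\omega_{k}+i\omega_{l}-i\omega_{n}=i\Delta_{n}^{k,l}$. The linear parts of $\partial_{t}u_{k}$ and $\partial_{t}u_{l}$ thus kill the small divisor and give
$$
\partial_{t}T_{n}-i\omega_{n}T_{n}=\frac{in_{1}}{2}\sum_{k+l=n}u_{k}u_{l}\;-\;\varepsilon\,\frac{in_{1}}{2}\sum_{k+l=n}\frac{k_{1}}{\Delta_{n}^{k,l}}\Big(\sum_{j+m=k}u_{j}u_{m}\Big)u_{l},
$$
where I already symmetrized the two cubic contributions (those coming from $\partial_{t}u_{k}$ and from $\partial_{t}u_{l}$) into one using $k\leftrightarrow l$.

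Next I assemble $\partial_{t}v_{n}-i\omega_{n}v_{n}=(\partial_{t}u_{n}-i\omega_{n}u_{n})+\varepsilon(\partial_{t}T_{n}-i\omega_{n}T_{n})$. The crucial cancellation is that the quadratic term from $\partial_{t}u$, namely $-\varepsilon\frac{in_{1}}{2}\sum u_{k}u_{l}$, exactly cancels $\varepsilon$ times the first term above. What survives is purely cubic and of order $\varepsilon^{2}$:
$$
\partial_{t}v_{n}-i\omega_{n}v_{n}=-\varepsilon^{2}\,\frac{in_{1}}{2}\sum_{\substack{j+m+l=n}}\frac{(j+m)_{1}}{\Delta_{n}^{j+m,l}}u_{j}u_{m}u_{l}.
$$
Relabeling $j,m,l$ as $j,k,l$ with $j+k+l=n$ and comparing with the stated formula
$$
F(u,u,u)_{n}=\frac{n_{1}}{2}\sum_{j+k+l=n}\frac{j_{1}+k_{1}}{i\Delta_{n}^{j+k,l}}u_{j}u_{k}u_{l}=-\frac{in_{1}}{2}\sum_{j+k+l=n}\frac{j_{1}+k_{1}}{\Delta_{n}^{j+k,l}}u_{j}u_{k}u_{l},
$$
we obtain $\partial_{t}v_{n}-i\omega_{n}v_{n}=\varepsilon^{2}F(u,u,u)_{n}$, i.e.\ $\partial_{t}v-Lv=\varepsilon^{2}F(u,u,u)$. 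The initial-datum statement is immediate from $v(0)=u(0)+\varepsilon S(u(0),u(0))$.

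There is no real obstacle beyond bookkeeping: the only point that deserves care is the symmetrization of the two cubic contributions so that the coefficient $j_{1}+k_{1}$ (rather than $2j_{1}$ or $2k_{1}$) appears in the final expression, and making sure the factor of $1/2$ in $S$ and in $F$ are tracked consistently with the doubling produced by the $k\leftrightarrow l$ symmetry.
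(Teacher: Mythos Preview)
Your proof is correct and follows essentially the same direct Fourier-coefficient computation as the paper: you use the identity $\omega_k+\omega_l-\omega_n=\Delta_n^{k,l}$ to cancel the order-$\varepsilon$ quadratic term and then identify the remaining cubic term with $F(u,u,u)_n$. The only cosmetic difference is that you package the computation as $(\partial_t T_n-i\omega_n T_n)$ before assembling, whereas the paper substitutes everything into $\dot v_n$ at once; the algebra and the key cancellation are identical.
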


\begin{proof}The derivatives with regard to time of the Fourier coefficients of $u$ are given by : 
$$
\dot u_n = i\omega_n u_n - \frac{in_1 \varepsilon}{2} \sum_{k+l=n}u_ku_l \; .
$$
Besides, the Fourier coefficients of $v$ are equal by definition to :
$$
v_n = u_n + \frac{\varepsilon in_1}{2} \sum_{k+l = n} \frac{u_ku_l}{i\Delta_n^{k,l}} \; .
$$
Hence, the derivative of $v_n$ is equal to 
$$
\dot v_n = \dot u_n + \frac{\varepsilon in_1}{2} \sum_{k+l=n} \frac{\dot u_k u_l +u_k\dot u_l }{i\Delta_n^{k,l}} \; .
$$
By replacing $\dot u_n$ by its value, and using the symmetry over $k$ and $l$ in the non linearity, we get :
\begin{eqnarray*}
\dot v_n  = & i\omega_n u_n - \frac{in_1 \varepsilon}{2} \sum_{k+l=n}u_ku_l + \frac{\varepsilon in_1}{2} \sum_{k+l=n} \frac{i\omega_k u_k u_l +u_ki\omega_l u_l }{i\Delta_n^{k,l}}\\
 & - in_1 \varepsilon \sum_{q+l = n}\frac{u_l}{i\Delta_n^{q,l}}\frac{iq_1 \varepsilon}{2} \sum_{j+k = q} u_ju_k \; ,
\end{eqnarray*}
then as $\Delta_n^{k,l} = \omega_k+ \omega_l -\omega_n $, we have
$$
-1 + \frac{i\omega_k + i\omega_l}{i\Delta_n^{k,l}} = \frac{i\omega_n}{i\Delta_n^{k,l}} \; ,
$$
hence we get that terms in $\varepsilon$ are equal to 
$$
\frac{in_1}{2}\varepsilon \sum_{k+l=n} u_k u_l \frac{i\omega_n}{i\Delta_n^{k,l}} = i\omega_n \varepsilon S(u,u)_n
$$
which is $i\omega_n (v_n - u_n)$. The term in $\varepsilon^2$ is equal to
$$
-in_1 \sum_{j+k+l=n} \frac{i (j_1+k_1)}{2}\frac{u_ju_ku_l}{i\Delta_n^{j+k,l}} = F(u,u,u)_n \; .
$$
\end{proof}

Let us write $b$ and $c$ in terms of $S$ and $F$.

\begin{lemma}The first order in $\varepsilon$ of $u$, that is $b$ is equal to
$$
-S(a,a) + U(t)(u_0,u_0) \; .
$$
\end{lemma}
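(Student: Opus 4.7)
The cleanest route is to interpret the right-hand side as $U(t)S(u_0,u_0)-S(a,a)$ and to show both sides solve the same linear Cauchy problem, then conclude by uniqueness. First, I observe that at $t=0$ we have $a(0)=u_0$, so $U(0)S(u_0,u_0)-S(a(0),a(0))=S(u_0,u_0)-S(u_0,u_0)=0$, which matches the initial datum of $b$.

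Next, I compute the Fourier coefficient of $\partial_t(U(t)S(u_0,u_0)-S(a,a))$. Since $(U(t)S(u_0,u_0))_n=e^{i\omega_n t}S(u_0,u_0)_n$, its time derivative is $i\omega_n$ times itself. For $S(a,a)_n=\frac{n_1}{2}\sum_{k+l=n}\frac{a_ka_l}{\Delta_n^{k,l}}$, using $\dot a_k=i\omega_k a_k$ and the key identity $\omega_k+\omega_l=\omega_n+\Delta_n^{k,l}$ valid whenever $k+l=n$, I get
$$
\partial_t S(a,a)_n=\frac{n_1}{2}\sum_{k+l=n}\frac{i(\omega_k+\omega_l)a_ka_l}{\Delta_n^{k,l}}=\frac{in_1}{2}\sum_{k+l=n}a_ka_l+i\omega_n S(a,a)_n.
$$
The first term is precisely $\bigl(\tfrac12\partial_x(a^2)\bigr)_n$, so the combination $v:=U(t)S(u_0,u_0)-S(a,a)$ satisfies
$$
\partial_t v_n-i\omega_n v_n=-\bigl(\tfrac12\partial_x(a^2)\bigr)_n,
$$
i.e.\ $\partial_t v-Lv+\tfrac12\partial_x(a^2)=0$, which is exactly the equation defining $b$. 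By uniqueness of the linear Cauchy problem with source $-\tfrac12\partial_x(a^2)$ and zero initial datum, $b=v$.

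As a sanity check, one can also verify the identity directly from Lemma \ref{lem-formob}: writing $e^{i\omega_n t}(e^{i\Delta_n^{k,l}t}-1)=e^{i(\omega_k+\omega_l)t}-e^{i\omega_n t}=a_ka_l/(\lambda_k\lambda_lg_kg_l)-e^{i\omega_n t}(u_0)_k(u_0)_l/(\lambda_k\lambda_lg_kg_l)$, the formula for $b_n(t)$ factors as $-\frac{n_1}{2}\sum_{k+l=n}(a_ka_l-e^{i\omega_n t}(u_0)_k(u_0)_l)/\Delta_n^{k,l}$, which is exactly $-S(a,a)_n+e^{i\omega_n t}S(u_0,u_0)_n$. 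No step here is a real obstacle; the only thing to be careful about is keeping track of the relation $\omega_k+\omega_l-\omega_n=\Delta_n^{k,l}$ so that the symbol of $L$ appears in the right place.
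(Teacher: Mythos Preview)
Your proof is correct. In fact, your ``sanity check'' paragraph is precisely the paper's own argument: the paper simply takes the explicit formula $b_n(t)=-\frac{n_1}{2}\sum_{k+l=n}e^{i\omega_n t}\frac{e^{i\Delta_n^{k,l}t}-1}{\Delta_n^{k,l}}\lambda_k\lambda_l g_kg_l$, splits the numerator $e^{i\Delta_n^{k,l}t}-1$ into two pieces, and recognises the two resulting sums as $-S(a,a)_n$ and $e^{i\omega_n t}S(u_0,u_0)_n$.

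Your primary route, showing that $U(t)S(u_0,u_0)-S(a,a)$ solves the same inhomogeneous linear problem as $b$ with the same initial datum and invoking uniqueness, is a mild variant. It is exactly the strategy the paper adopts for the \emph{next} lemma (the one identifying $c$ as $-2S(a,b)+\int_0^t U(t-t')F(a,a,a)\,dt'$), where the explicit formula for $c_n$ is less convenient to split by hand. So both approaches are available here; the paper chose the shorter direct computation for $b$ and the Cauchy-problem argument for $c$, while you essentially gave both for $b$.
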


\begin{proof}The Fourier coefficients of $b$ are given by
$$
b_n(t) = -\frac{n_1}{2} \sum_{k+l=n} e^{i\omega_n t }\frac{e^{i\Delta_n^{k,l}t}-1}{\Delta_n^{k,l}} \lambda_k\lambda_l g_k g_l\; .
$$
By dividing $b_n$ in two sums, we get :
$$b_n(t) = -\frac{n_1}{2} \sum_{k+l=n} \frac{1}{\Delta_n^{k,l}} e^{i\omega_k t}\lambda_ke^{i\omega_l t}\lambda_l g_k g_l
 + e^{i\omega_n t }\frac{n_1}{2} \sum_{k+l=n} \frac{1}{\Delta_n^{k,l}} \lambda_k\lambda_l g_k g_l\; .
$$
The first sum is the $n$th Fourier coefficient of $-S(a,a)$, the second is the one of $U(t)S(u_0,u_0)$.\end{proof}

\begin{lemma}The second order in $\varepsilon$ of $u$, that is $c$, equal to
$$
-2 S(a,b) + \int_{0}^t U(t-t') F(a(t'),a(t'),a(t'))dt' \; .
$$
\end{lemma}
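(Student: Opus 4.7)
My plan is to derive this identity directly from the Duhamel representation of the normal-form equation, rather than recompute from the explicit formula for $c_n$ given earlier. Setting $v = u + \varepsilon S(u,u)$, the preceding proposition gives $\partial_t v - L v = \varepsilon^2 F(u,u,u)$ with $v(0) = u_0 + \varepsilon S(u_0,u_0)$. Duhamel's formula then reads
$$
u + \varepsilon S(u,u) = U(t) u_0 + \varepsilon U(t) S(u_0,u_0) + \varepsilon^2 \int_0^t U(t-t')\, F\bigl(u(t'),u(t'),u(t')\bigr)\, dt'.
$$

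I would then substitute the Picard expansion $u = a + \varepsilon b + \varepsilon^2 c + \varepsilon^3 d(\varepsilon)$, with $a = U(t) u_0$, and match coefficients order by order in $\varepsilon$. Since $S$ is bilinear and symmetric (the symmetry $S(u,v) = S(v,u)$ follows immediately from $\Delta_n^{k,l} = \Delta_n^{l,k}$), one has
$$
S(u,u) = S(a,a) + 2\varepsilon S(a,b) + O(\varepsilon^2),
$$
and trilinearity of $F$ yields $F(u,u,u) = F(a,a,a) + O(\varepsilon)$. The $\varepsilon^0$ identification is the tautology $a = U(t) u_0$; the $\varepsilon^1$ identification reproduces the preceding lemma, $b = -S(a,a) + U(t)S(u_0,u_0)$; and the $\varepsilon^2$ identification yields exactly the announced formula
$$
c = -2 S(a,b) + \int_0^t U(t-t')\, F\bigl(a(t'),a(t'),a(t')\bigr)\, dt'.
$$

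I do not expect any real obstacle here: the argument is purely algebraic once the expansions of $S(u,u)$ and $F(u,u,u)$ are written out, because $a$, $b$, $c$ are defined independently of $\varepsilon$ and $S$, $F$ are continuous multilinear maps, so all residual contributions are $O(\varepsilon^3)$. As a sanity check one could alternatively verify directly that both $c$ and the proposed right-hand side solve the Cauchy problem $\partial_t w - Lw = -\partial_x(ab)$, $w(0)=0$, using the commutator identity $L S(u,v) - S(Lu,v) - S(u,Lv) = -\tfrac{1}{2}\partial_x(uv)$ together with the fact that $F(a,a,a) + S(a,\partial_x(a^2)) = 0$, which follows by relabeling the summation variables in $F$.
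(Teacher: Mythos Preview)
Your main argument --- Duhamel for $v = u + \varepsilon S(u,u)$ and then matching powers of $\varepsilon$ --- is correct, and it is genuinely different from what the paper does. The paper takes precisely the route you relegate to a ``sanity check'': it first records the two identities
\[
LS(\alpha,\beta) - S(L\alpha,\beta) - S(\alpha,L\beta) = -\tfrac{1}{2}\partial_x(\alpha\beta),
\qquad
F(\alpha,\beta,\gamma) = -S(\gamma,\partial_x(\alpha\beta)),
\]
uses them to compute $\partial_t S(a,b) = LS(a,b) + \tfrac12\partial_x(ab) + \tfrac12 F(a,a,a)$, and concludes that $f - 2S(a,b)$, with $f := \int_0^t U(t-t')F(a,a,a)\,dt'$, solves the Cauchy problem $\partial_t w - Lw = -\partial_x(ab)$, $w(0)=0$, hence equals $c$ by uniqueness. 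Your approach is cleaner and scales mechanically to higher orders; the paper in fact performs exactly this expansion of $v$ immediately afterwards to identify the $\varepsilon^3$-remainder $w$. The one trade-off is that your claim ``all residual contributions are $O(\varepsilon^3)$'' is not purely algebraic: after dividing out $\varepsilon^2$ you need $\varepsilon\, d(\varepsilon) \to 0$, i.e.\ an $\varepsilon$-uniform bound on $d$ (equivalently on $u$). That bound is true and follows from local well-posedness, but in the paper it is only established later via the contraction argument in Subsection~\ref{sub-probbounds}; the paper's direct ODE verification is self-contained at this point and avoids the forward reference.
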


\begin{proof} To prove this lemma, we need to do two remarks. The first one is that we have the following formula :
$$
LS(\alpha,\beta) - S(L\alpha,\beta) - S(\alpha,L\beta) = -\frac{1}{2} \partial_x (\alpha \beta)\; .
$$
Indeed, $L$ is the Fourier multiplier by $i\omega_n$, thus in terms of Fourier coefficients, we get
$$
\Big[LS(\alpha,\beta) - S(L\alpha,\beta) - S(\alpha,L\beta)\Big]_n = \frac{in_1}{2} \sum_{k+l = n} \frac{i\omega_n (\alpha_k\beta_l) - (i\omega_k \alpha_k) \beta_l - \alpha_k (i\omega_l \beta_l)}{i\Delta_n^{k,l}}
$$
and we then recall that $\Delta_n^{k,l} = \omega_k +\omega_l -\omega_n$ to get the simplification
$$
\Big[LS(\alpha,\beta) - S(L\alpha,\beta) - S(\alpha,L\beta)\Big]_n = -\frac{in_1}{2} \sum_{k+l = n} \alpha_k \beta_l = \Big[ -\frac{1}{2}\partial_x (\alpha\beta) \Big]_n \; .
$$

The second remark is that
$$
F(\alpha,\beta,\gamma) = - S(\gamma, \partial_x (\alpha\beta)) \; .
$$
Indeed, in terms of Fourier coefficients
$$
\Big[ S(\gamma, \partial_x (\alpha\beta))\Big]_n = \frac{in_1}{2} \sum_{k+l=n} \frac{\gamma_k il_1 \sum_{j+q=l} \alpha_j\beta_q}{i\Delta_n^{k,l}}
$$
and suppressing the intermediary use of $l$,
$$
\Big[ S(\gamma, \partial_x (\alpha\beta))\Big]_n = \frac{in_1}{2} \sum_{j+k+q=n} \frac{j_1+q_1}{\Delta_n^{k,j+q}}\gamma_k \alpha_j\beta_q
$$
which is equal to minus the $n$-th Fourier coefficient of $F(\alpha,\beta,\gamma)$. 

We can now prove the lemma. We call
$$
f = \int_{0}^t U(t-t') F(a(t'),a(t'),a(t'))dt' \; ,
$$
it satisfies the equation
$$
\partial_t f - Lf = F(a,a,a)
$$
with initial datum $0$. Besides $S(a,b)$ satisfies another equation, we have
$$
\partial_t S(a,b) = S(\partial_t a,b) + S(a,\partial_t b)
$$
and using the equations satisfied by $a$ and $b$, we get
$$
\partial_t S(a,b) = S(La,b) + S(a,Lb - \frac{1}{2}\partial_x a^2)\; .
$$
Using our two remarks, we get
$$
\partial_t S(a,b) = LS(a,b) + \frac{1}{2}\partial_x (ab) + \frac{1}{2}F(a,a,a)\; .
$$
Thus, $f-2S(a,b)$ satisfies the equation 
$$
\partial_t (f-2S(a,b)) - L (f-2S(a,b)) = -\partial_x (ab)
$$
with initial datum $0$ as $b(t=0) = 0$. The solution of this equation being unique in the spaces we consider, we have $c= f -2S(a,b)$. \end{proof}

Finally, we expand $v$ until order 3 in $\varepsilon$, we have
$$
v=a + \varepsilon U(t) S(u_0,u_0) + \varepsilon^2 f + \varepsilon^3 w(\varepsilon) \; .
$$
We recall that $a$ solves $\partial_t a -La = 0$ with initial datum $u_0$, that $U(t)S(u_0,u_0)$ solves the same equation with initial datum $S(u_0,u_0)$, and that $f$ solves
$$
\partial_t f -Lf = F(a,a,a)
$$
with initial datum $0$. Hence, we get that $w$ is the solution of
$$
\partial_t w-L w = \frac{F(u,u,u)-F(a,a,a)}{\varepsilon} 
$$
with initial datum $0$.

By expanding $u$ in the expression of $v$, we get
\begin{eqnarray*}
v & = & a + \varepsilon \Big( b+S(a,a) \Big) + \varepsilon^2 \Big(c + 2S(a,b) \Big) +  \varepsilon^3 \Big( d + S(b,b) +2S(a,c)\\
& &  + 2\varepsilon ( S(b,c) + S(a,d)) +\varepsilon^2 (S(c,c) +2S(b,d)) +2\varepsilon^3 S(c,d) +\varepsilon^4 S(d,d)\Big) \; .
\end{eqnarray*}
Using our lemmas, that is $b+S(a,a) = U(t)S(u_0,u_0)$ and $c+2S(a,b) = f$, and by identification, we get
$$
w = d + S(b,b) +2S(a,c) + 2\varepsilon ( S(b,c) + S(a,d)) +\varepsilon^2 (S(c,c) +2S(b,d)) +2\varepsilon^3 S(c,d) +\varepsilon^4 S(d,d) \; .
$$

\subsection{Local inversion}\label{sub-locinv}

We prove in this subsection, that as long as $w$ is small enough, ten $d$ can be retrieved in terms of $w$.

\begin{proposition}Let $s>\frac{1}{2}$ then $S$ is a continuous bilinear map from $(L^\infty,H^s)^2$ to $L^\infty,H^s$.

With $s>1$, $F$ is a continuous trilinear map from $(L^\infty,H^s)^3$ to $L^\infty,H^s$.\end{proposition}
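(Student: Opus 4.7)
The core idea is to invoke the non-resonance estimate $|\Delta_n^{k,l}| \geq 3|n_1 k_1 l_1|$ from the proposition above in order to remove all small denominators, so that both bounds reduce to weighted convolution inequalities on $\ell^2(\Z^* \times \Z)$ accessible by Young and Cauchy--Schwarz. Since $S$ and $F$ act deterministically on spatial Fourier coefficients, the $L^\infty(\Omega)$ part of the claim follows immediately by taking essential supremum in $\omega$ once the deterministic $H^s \to H^s$ bound is obtained, so I focus on the deterministic inequality.

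For $S$, the non-resonance bound gives
\[
|S(u,v)_n| \lesssim \sum_{k + l = n}\frac{|u_k|\,|v_l|}{|k_1|\,|l_1|}.
\]
I would split $|n|^s \leq 2^s(|k|^s + |l|^s)$, valid since $|n|\leq|k|+|l|$. In the $|k|^s$ piece, set $f_k = |k|^s|u_k|/|k_1|$ and $g_l = |v_l|/|l_1|$; Young's inequality gives $\|f \ast g\|_{\ell^2} \leq \|f\|_{\ell^2}\|g\|_{\ell^1}$. Using $|k_1|\geq 1$ one has $\|f\|_{\ell^2} \leq \|u\|_{H^s}$, while Cauchy--Schwarz yields
\[
\|g\|_{\ell^1} \leq \|v\|_{H^s}\left(\sum_{l_1\neq 0,\,l_2 \in \Z}\frac{1}{|l|^{2s}|l_1|^2}\right)^{1/2}.
\]
Carrying out the sum first in $l_2$ contributes $\lesssim |l_1|^{1-2s}$ as soon as $2s > 1$, after which $\sum_{l_1\neq 0}|l_1|^{-1-2s}$ converges for any $s>0$. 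Hence $s>1/2$ suffices, and the $|l|^s$ piece is treated symmetrically.

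For $F$, the analogous computation, in which the factor $j_1+k_1$ cancels against $|(j+k)_1|$ from the non-resonance bound applied to $\Delta_n^{j+k,l}$ (with the understanding that the term vanishes when $j_1+k_1=0$), yields
\[
|F(\alpha,\beta,\gamma)_n| \lesssim \sum_{j+k+l=n}\frac{|\alpha_j|\,|\beta_k|\,|\gamma_l|}{|l_1|}.
\]
I would split $|n|^s \lesssim |j|^s + |k|^s + |l|^s$, place the weight on a single factor at a time (carrying the $1/|l_1|$ with $\gamma$ when the weight falls on $\gamma$, so as to keep the corresponding $\ell^2$ norm controlled by $\|\gamma\|_{H^s}$), and apply Young iteratively, $\|f \ast g \ast h\|_{\ell^2} \leq \|f\|_{\ell^2}\|g\|_{\ell^1}\|h\|_{\ell^1}$. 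The binding constraint, and the reason $s>1$ rather than $s>1/2$ is required, comes from the $\ell^1$ bound on an \emph{unweighted} factor: for instance
\[
\sum_k|\beta_k| \leq \|\beta\|_{H^s}\left(\sum_{k_1\neq 0,\,k_2 \in \Z}|k|^{-2s}\right)^{1/2},
\]
and the two-dimensional sum $\sum_k|k|^{-2s}$ converges precisely when $s>1$. The weighted sum $\sum_l|\gamma_l|/|l_1|$ is already finite for $s>1/2$ by the previous computation, so the overall threshold is $s>1$. I expect the only mildly delicate point to be the bookkeeping across the three placements of the weight, checking that in each case one can assign the $\ell^2$ slot to the factor carrying the weight and distribute the $\ell^1$ slots (including the one carrying $|l_1|^{-1}$) to the remaining two.
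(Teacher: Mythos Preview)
Your proof is correct and follows essentially the same route as the paper: exploit the non-resonance lower bound $|\Delta_n^{k,l}|\geq 3|n_1k_1l_1|$ to kill the small denominator, split $|n|^s\lesssim |k|^s+|l|^s$ (respectively $|j|^s+|k|^s+|l|^s$), and reduce to $\ell^2$--$\ell^1$ convolution estimates with the $\ell^1$ factor controlled by Cauchy--Schwarz against $\sum |k|^{-2s}|k_1|^{-2}$ (for $S$) or $\sum |k|^{-2s}$ (for $F$). The only cosmetic difference is that the paper drops the extra $|l_1|^{-1}$ factors immediately (using $|l_1|\geq 1$) and writes the convolution bound via an explicit squaring and Cauchy--Schwarz rather than invoking Young, whereas you keep the extra $|l_1|^{-1}$ and phrase things as Young's inequality; neither choice affects the threshold or the argument.
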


\begin{proof} This proof is similar to the corresponding ones in \cite{Tlon}. Let $u,v,w \in L^\infty, H^s$. We first suppose that $s>1/2$. Let $\alpha = S(u,v)$, then $\alpha_n$ satisfies : 
$$
2\alpha_n = in_1\sum_{k+l = n} \frac{u_kv_l}{i\Delta_n^{k,l}} \; .
$$

We recall that $|\Delta_n^{k,l}| \geq 3|k_1 l_1 n_1|$ as long as $k+l=n$. Hence, using that $|n|^s \leq C_s(|k|^s+|l|^s)$ ,
$$
|n|^{s}|\alpha_n|\leq C_s \sum_k \frac{|n-k|^s|u_{n-k}|\; |v_k| }{|k_1|} + C_s \sum_k \frac{|n-k|^s|v_{n-k}|\; |u_k| }{|k_1|} \; .
$$
We take this quantity to the square and we sum it over $n$ to get
$$
\|\alpha\|_{H^s}^2 \leq C_s^2 \left( \sum_{k,l} \frac{v_k v_l}{|k_1 l_1|} \sum_n |n-k|^s|u_{n-k}|\; |n-l|^s|u_{n-l}| + \sum_{k,l} \frac{u_k u_l}{|k_1 l_1|} \sum_n |n-k|^s|v_{n-k}|\; |n-l|^s|v_{n-l}| \right) .
$$
Using a Cauchy-Schwartz inequality on the sums over $n$ we get : 
$$
\|\alpha\|_{H^s} \leq C_s \left( \|u\|_{H^s} \sum_k \frac{|v_k|}{|k_1|} + \|v\|_{H^s} \sum_k \frac{|u_k|}{|k_1|}\right)\; ,
$$
and then on the sums over $k$,
$$
\sum_k \frac{|v_k|}{|k_1|} \leq \|v\|_{H^s} \sqrt{\sum_k \frac{1}{|k_1|^2\;|k|^{2s}}} \; .
$$
The series converges as $s> 1/2$, hence
$$
\|S(u,v)\|_{H^s} \leq C_s \|u\|_{H^s}\|v\|_{H^s} \;, 
$$
and by taking its $L^\infty$ norm in the probability space,
$$
\|S(u,v)\|_{L^\infty,H^s} \leq C_s \|u\|_{L^\infty,H^s}\|v\|_{L^\infty,H^s} \; .
$$

We suppose now that $s>1$. Let $\beta = F(u,v,w)$, then $\beta_n$ satisfies
$$
2\beta_n = -n_1 \sum_{j+k+l=n} (j_1+k_1) \frac{u_j v_k w_l}{i\Delta_n^{j+k,l}} \; .
$$
By using the same inequalities as previously on $\Delta_{n}^{k,l}$ and $|n|^s$, we get
$$
\frac{|n_1|\;|j_1+k_1|}{|\Delta_n^{k,l}|} \leq 3\; ,
$$
hence
$$
|n|^s |\beta_n| \leq C_s \sum_{j+k+l=n} (|j|^s+|k|^s+|l|^s) |u_j|\; |v_k|\; |w_l|\; .
$$
Using Cauchy Schwartz inequalities on the sum over $n$ and symmetries of this expression, we get
$$
\|\beta\|_{H^s} \leq C_s \left( \|u\|_{H^s} \Big( \sum_k |v_k|\Big) \Big(\sum_l |w_l|\Big) + \|v\|_{H^s} \Big( \sum_k |w_k|\Big) \Big(\sum_l |u_l|\Big)+ \|w\|_{H^s} \Big( \sum_k |u_k|\Big) \Big(\sum_l |v_l|\Big)\right)\; ,
$$
and then
$$
\sum_k |v_k| \leq \|v\|_{H^s}\sqrt{\sum_k |k|^{-2s}}
$$
and the series converges as $k$ is of dimension $2$ and $s>1$. Therefore,
$$
\|\beta\|_{H^s} \leq C_s \|u\|_{H^s}\|v\|_{H^s}\|w\|_{H^s}
$$
and by taking its $L^\infty$ norm in probability
$$
\|F(u,v,w)\|_{L^\infty,H^s} \leq C_s \|u\|_{L^\infty,H^s}\|v\|_{L^\infty,H^s}\|w\|_{L^\infty,H^s} \; .
$$ \end{proof}

\begin{proposition} Assuming that $s>1$ and that the norm of $u_0$ is 1, there exists a constant $C$ such that for all time $t\in \R$, 
$$
\|b(t)\|_{L^\infty,H^s} \leq C \; ,\; \|c(t)\|_{L^\infty,H^s} \leq C(1+|t|) \; .
$$
\end{proposition}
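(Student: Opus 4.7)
The plan is to exploit the normal-form expressions for $b$ and $c$ derived in the two lemmas just above, together with the continuity estimates for $S$ and $F$ established in the preceding proposition, and the fact that the linear propagator $U(t)$ is an isometry on $H^s$ (its Fourier multiplier has modulus $1$). Throughout, since $u_0 \in L^\infty(\Omega,H^s)$ with norm $1$ and $a(t)=U(t)u_0$, one has $\|a(t)\|_{H^s}=\|u_0\|_{H^s}$ pointwise in $\Omega$, hence $\|a\|_{L^\infty,H^s}\le 1$ uniformly in $t$.

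For the bound on $b$, I invoke the identity
$$
b(t) = -S(a(t),a(t)) + U(t)S(u_0,u_0).
$$
Since $s>1>1/2$, the previous proposition gives $\|S(a,a)\|_{L^\infty,H^s}\le C_s\|a\|_{L^\infty,H^s}^2\le C_s$, and similarly $\|S(u_0,u_0)\|_{L^\infty,H^s}\le C_s$. Because $U(t)$ preserves the $H^s$-norm and commutes with the $L^\infty(\Omega)$-norm (it acts only in the spatial variable), the second term is also bounded by $C_s$, uniformly in $t$. A triangle inequality yields the time-independent bound on $\|b(t)\|_{L^\infty,H^s}$.

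For $c$, I use
$$
c(t) = -2S(a(t),b(t)) + \int_0^t U(t-t')F(a(t'),a(t'),a(t'))\,dt'.
$$
The first term is handled exactly as before: $\|S(a,b)\|_{L^\infty,H^s}\le C_s\|a\|_{L^\infty,H^s}\|b\|_{L^\infty,H^s}\le C$, uniformly in $t$ by the bound just obtained on $b$. For the Duhamel integral I pull the norm inside the integral, use that $U(t-t')$ is an isometry, and apply the trilinear estimate (valid because $s>1$):
$$
\|U(t-t')F(a,a,a)(t')\|_{L^\infty,H^s} = \|F(a,a,a)(t')\|_{L^\infty,H^s} \le C_s\|a(t')\|_{L^\infty,H^s}^3 \le C_s.
$$
Integrating over $[0,t]$ (or $[t,0]$ for negative times) produces a contribution of size $C_s|t|$, and summing gives $\|c(t)\|_{L^\infty,H^s}\le C(1+|t|)$, as required.

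There is no real obstacle beyond bookkeeping: the linear growth of $c$ versus the boundedness of $b$ reflects exactly the fact that, after normal-form reduction, $b$ is itself a quadratic expression in the linear flow (no time integration needed), whereas $c$ carries one Duhamel integral against a source term that has been reduced to the bounded trilinear $F(a,a,a)$. It is precisely this saving—integrating a bounded source rather than a growing one—that the normal-form transformation is designed to deliver, and it is what will allow the contraction argument for $d$ to extend to the long time scales claimed in the main theorems.
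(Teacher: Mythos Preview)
Your proof is correct and follows essentially the same approach as the paper: both use the normal-form identities $b=-S(a,a)+U(t)S(u_0,u_0)$ and $c=-2S(a,b)+\int_0^t U(t-t')F(a,a,a)\,dt'$, invoke the continuity of $S$ and $F$ on $L^\infty,H^s$ and the isometry of $U(t)$, and conclude by the triangle inequality and a time integration for the Duhamel term.
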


\begin{proof}We use the descriptions of $b$ and $c$ in terms of $S$ and $F$. We have
$$
b = -S(a,a) + U(t) S(u_0,u_0)
$$
and since $U(t)$ is isometric in $H^s$, it is isometric in $L^\infty,H^s$, we then have, thanks to the continuity of $S$ in $H^s$,
$$
\|b(t)\|_{L^\infty,H^s} \leq C  \|u_0\|_{L^\infty,H^s}^2 \leq C\; .
$$
For $c$, we have
$$
c= -2S(a,b) + f \; ,
$$
we recall that $f$ is given by
$$
f(t) = \int_{0}^t U(t-t') F(a(t'),a(t'),a(t')) dt'
$$
and that $F$ is continuous in $H^s$ thus
$$
\|f\|_{L^\infty,H^s} \leq C|t|
$$
and besides
$$
\|S(a,b)\|_{L^\infty,H^s} \leq C \|a\|_{L^\infty,H^s}\|b\|_{L^\infty,H^s} \leq C
$$
which gives the result.
\end{proof}

\begin{definition}We define $\Lambda_\varepsilon$ by
$$
\Lambda_\varepsilon (d) = d + 2\varepsilon\Big( S(a,d) +  \varepsilon S(b,d) + \varepsilon^2 S(c,d)\Big) + \varepsilon^4 S(d,d) \; .
$$
\end{definition}

\begin{remark} The quantity $\Lambda_\varepsilon (d)$ corresponds to the variation in $d$ of $w$, the term of third order in the expansion of $v$, the solution of
$$
\partial_t v -Lv = F(u,u,u) \; .
$$
More precisely, $w$ is given by
$$
w = S(b,b) + 2S(a,c) + 2\varepsilon S(b,c) + \varepsilon^2 S(c,c) + \Lambda_\varepsilon (d)
$$
and in terms of $v$
$$
v= a + \varepsilon U(t) S(u_0,u_0) + \varepsilon^2 f + \varepsilon^3 w \; .
$$
\end{remark}

\begin{proposition}There exists $\varepsilon_0>0$ and $r_0> 0$, $T_0>0$ such that, with $r = r_0\varepsilon^{-4}$, $T=T_0\varepsilon^{-3}$, for all $\varepsilon \in [0,\varepsilon_0]$, and all $t\in [-T,T]$, $\Lambda_\varepsilon$ is a homeomorphism from the ball $B_r$ of centre $0$ and radius $r$ of the space $L^\infty([-|t|,|t|],L^\infty(\Omega,H^s(\T^2)))$ to the ball of radius $r/2$ of the same space. What is more, for all $g_1,g_2 \in B_{r/2}$
$$
\| \Lambda_\varepsilon^{-1}(g_1) - \Lambda_\varepsilon^{-1}(g_2)\|_{L^\infty,L^\infty,H^s} \leq 2 \|g_1-g_2\|_{L^\infty,L^\infty,H^s} \; .
$$
\end{proposition}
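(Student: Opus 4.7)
The approach I would take is a Banach fixed point argument in the Banach space $X_t := L^\infty([-|t|,|t|], L^\infty(\Omega, H^s(\T^2)))$. For $g \in B_{r/2}$, inverting $\Lambda_\varepsilon$ amounts to solving the fixed point equation
$$
d = \Phi_g(d) := g - 2\varepsilon S(a,d) - 2\varepsilon^2 S(b,d) - 2\varepsilon^3 S(c,d) - \varepsilon^4 S(d,d)
$$
in $B_r \subset X_t$, so the plan is to show that, for $\varepsilon_0, T_0, r_0$ small enough, $\Phi_g$ is a strict contraction of $B_r$ with ratio at most $1/2$.

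For the self-map property, I would combine the continuity of $S$ (proved just above, with constant $C_s$) and the a priori bounds $\|a\|_{X_t}\leq 1$, $\|b\|_{X_t}\leq C$, $\|c\|_{X_t}\leq C(1+|t|)$ to obtain, for $d\in B_r$,
$$
\|\Phi_g(d)\|_{X_t} \leq \tfrac{r}{2} + 2C_s\bigl(\varepsilon + C\varepsilon^2 + C\varepsilon^3(1+|t|)\bigr)\, r + C_s \varepsilon^4 r^2 .
$$
Substituting $r = r_0\varepsilon^{-4}$ and $|t|\leq T_0\varepsilon^{-3}$, the correction is dominated by a universal multiple of $\bigl(\varepsilon_0 + \varepsilon_0^2 + \varepsilon_0^3 + T_0 + r_0\bigr)\, r$, which is $\leq r/2$ once $\varepsilon_0, T_0, r_0$ are taken small enough.

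For the contraction estimate, the bilinearity and symmetry of $S$ give $S(d_1,d_1)-S(d_2,d_2) = S(d_1+d_2,d_1-d_2)$, so an analogous computation yields
$$
\|\Phi_g(d_1)-\Phi_g(d_2)\|_{X_t} \leq 2C_s\bigl(\varepsilon + C\varepsilon^2 + C\varepsilon^3(1+|t|) + \varepsilon^4 r\bigr)\|d_1-d_2\|_{X_t},
$$
and the same substitutions make this factor $\leq 1/2$. Banach's theorem then produces a unique fixed point $d(g)\in B_r$, i.e.\ a continuous inverse $\Lambda_\varepsilon^{-1}: B_{r/2}\to B_r$. Subtracting the two fixed point identities for $d(g_1)$ and $d(g_2)$ and using the contraction ratio $1/2$ gives $\|d(g_1)-d(g_2)\|_{X_t}\leq \|g_1-g_2\|_{X_t} + \tfrac12\|d(g_1)-d(g_2)\|_{X_t}$, whence the advertised Lipschitz constant $2$.

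The main delicate point is the simultaneous balancing of the three small parameters $\varepsilon_0, T_0, r_0$. The term $2\varepsilon^3 S(c,d)$ is the tightest, since $\|c\|$ grows like $|t|$ and we allow $|t|$ up to $T_0\varepsilon^{-3}$, so $\varepsilon^3|t|$ is controlled only by $T_0$ and not by any power of $\varepsilon$; likewise the quadratic $\varepsilon^4 S(d,d)$ contributes $\varepsilon^4 r = r_0$, forcing $r_0$ small independently of $\varepsilon$. The three linear corrections in $\varepsilon$, $\varepsilon^2$, $\varepsilon^3$ are genuinely small in $\varepsilon$, which leaves enough room to choose $T_0$ and $r_0$ consistently and is exactly what limits the available time scale at this stage to $T \sim \varepsilon^{-3}$.
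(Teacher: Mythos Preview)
Your proof is correct and is essentially the same as the paper's. The paper phrases it as a ``local inversion'' argument, writing $\varphi = \mathrm{Id} - \Lambda_\varepsilon$ and bounding the differential $d\varphi_{|d}(h) = -2\varepsilon S(a,h) - 2\varepsilon^2 S(b,h) - 2\varepsilon^3 S(c,h) - 2\varepsilon^4 S(d,h)$ by $1/2$ under the same choices of $\varepsilon_0, T_0, r_0$, then applying the contraction mapping theorem to $\varphi + g$; this is exactly your $\Phi_g$, and your parameter balancing and Lipschitz derivation match the paper's line by line.
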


\begin{proof} This is the local inversion theorem on $\Lambda_\varepsilon$ where we keep track of the dependence of the constants on $\varepsilon$. Let $\varphi = Id - \Lambda_\varepsilon$. The differential of $\varphi$ satisfies
$$
d\varphi_{|d} (h) = -2 \varepsilon S(a,h) - 2 \varepsilon^2 S(b,h) - 2\varepsilon^3 S(c,h)-2\varepsilon^4S(d,h) \; .
$$
As the norm of $u_0$ is supposed to be equal to $1$, so is the norm of $a$, and hence, there exists $C_s$ such that the operator norm of $d\varphi_{|c}$ satisfies
$$
\|| d\varphi_{|c} \|| \leq C_s( \varepsilon + \varepsilon^2 + \varepsilon^3 (1+|t|) + \varepsilon^4 \|d\|) \; .
$$
With $\varepsilon$ less than $\varepsilon_0$, $|t|$ less than $T = T_0\varepsilon^{-3}$, $\|d\|$ less than $r=r_0\varepsilon^{-4}$, and choosing the constants $\varepsilon_0$, $r_0$ and $T_0$ small enough, we get that the norm of $d\varphi_{|d}$ is less than $1/2$. We then have that for all $d_1,d_2$ in $B_r$ : 
$$
\|\varphi(d_1) -\varphi(d_2)\|_{L^\infty,H^s} \leq \frac{1}{2}\|d_1-d_2\|_{L^\infty,H^s} \; .
$$
If $g$ is in $B_{r/2}$ we can apply the fixed point theorem on $ \varphi + g$ to get the existence of a unique $d$ in $B_r$ such that
$$
\Lambda_\varepsilon (d) = g\; .
$$
Besides, for all $g_1,g_2 \in B_{r/2}$, with $d_1 =  \Lambda_\varepsilon^{-1}(g_1)$ and $d_2 =  \Lambda_\varepsilon^{-1}(g_2)$, we have that
$$
\|d_1 -d_2\|_{L^\infty,H^s} \leq \|g_1 - g_2\|_{L^\infty,H^s} + \|\varphi(d_1) - \varphi(d_2)\|_{L^\infty,H^s}
$$
which leads to
$$
\|d_1 -d_2\|_{L^\infty,H^s} \leq 2\|g_1 - g_2\|_{L^\infty,H^s} \; .
$$\end{proof}

\subsection{Probabilistic bounds}\label{sub-probbounds}

In this subsection, we perform the contraction argument that gives us the bound on $d$.

\begin{proposition}Let $s>1$, $\alpha \in [1,2]$ and assume $\|u_0\|_{L^\infty,H^s} = 1$. There exists three constants $C_s>0$, $T_1>0$ and $\varepsilon_1 > 0$ such that, with $1+T = T_1\varepsilon^{-\alpha}$, the problem \eqref{eqond} has a unique local solution in $\mathcal C([-T,T],L^\infty,H^s)$ and for all $t\in [-T,T]$, the norm of $d$ satisfies
$$
\|d(t)\|_{L^\infty,H^s} \leq C_s(1+|t|)^{1+\beta} \; ,
$$
with $\beta = 1- \frac{1}{\alpha}$.
\end{proposition}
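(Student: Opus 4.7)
The plan is to work with the normal-form variable $w$ introduced in Section~\ref{sub-rewmul}, which satisfies $\partial_t w - L w = \varepsilon^{-1}(F(u,u,u) - F(a,a,a))$ with $w(0)=0$, and to recover $d$ at the end through the local inverse $\Lambda_\varepsilon^{-1}$ of Section~\ref{sub-locinv}. Duhamel and the $L^\infty, H^s$-isometry of $U(t)$ give
$$
\|w(t)\|_{L^\infty,H^s} \leq \varepsilon^{-1}\int_0^{|t|}\|F(u,u,u)(t') - F(a,a,a)(t')\|_{L^\infty,H^s}\,dt',
$$
while the $2$-Lipschitz property of $\Lambda_\varepsilon^{-1}$ on $B_{r/2}$, combined with the continuity of $S$ and the known bounds $\|a\|_{L^\infty,H^s}, \|b\|_{L^\infty,H^s}\lesssim 1$, $\|c(t)\|_{L^\infty,H^s}\lesssim 1+|t|$, yields the \emph{a posteriori} inequality $\|d(t)\|_{L^\infty,H^s} \lesssim \|w(t)\|_{L^\infty,H^s} + (1+|t|) + \varepsilon^2(1+|t|)^2$.

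The key step is to trilinearise the source by writing $h := u - a = \varepsilon b + \varepsilon^2 c + \varepsilon^3 d$ and expanding $F(u,u,u) - F(a,a,a)$. Each summand has the schematic form $\varepsilon^{n_b+2n_c+3n_d}F(X_1,X_2,X_3)$, where the multi-index $(n_b,n_c,n_d)$ satisfies $n_b+n_c+n_d\geq 1$ and the $X_i$'s are drawn from $\{a,b,c,d\}$ with the prescribed multiplicities. Using continuity of $F$, the bounds on $a,b,c$, and the notation $M(t):=\sup_{|t'|\leq|t|}\|d(t')\|_{L^\infty,H^s}$, division by $\varepsilon$ and integration in time yield
$$
\|w(t)\|_{L^\infty,H^s} \lesssim \sum_{\substack{n_b,n_c,n_d\geq 0\\ n_b+n_c+n_d\geq 1}}\varepsilon^{n_b+2n_c+3n_d-1}(1+|t|)^{n_c+1}M(t)^{n_d}.
$$
Combined with the local inversion bound this produces a self-consistent inequality for $M(t)$ whose leading non-$M$-dependent pieces are $C_s\bigl((1+|t|) + \varepsilon(1+|t|)^2 + \varepsilon^2(1+|t|)^3 + \cdots\bigr)$, plus the coupling term $\varepsilon^2|t|M(t)$ and higher-order corrections in $M$.

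Under the restriction $1+|t|\leq T_1\varepsilon^{-\alpha}$ with $\alpha\in[1,2]$ and $\beta=1-1/\alpha$, a direct power count using the identities $\alpha(1+\beta)=2\alpha-1$ and $\alpha\beta=\alpha-1$ gives, for each admissible $(n_b,n_c,n_d)$ with $n_c+n_d\geq 1$,
$$
\varepsilon^{n_b+2n_c+3n_d-1}(1+|t|)^{n_c+1}M(t)^{n_d} \lesssim T_1^{\,n_c+n_d(1+\beta)-\beta}\,\varepsilon^{n_b+(2-\alpha)(n_c+2n_d-1)}(1+|t|)^{1+\beta},
$$
whenever $M(t) \leq C_s(1+|t|)^{1+\beta}$. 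For $\alpha\leq 2$ and $n_c+n_d\geq 1$, the $\varepsilon$-exponent is nonnegative and the $T_1$-exponent is strictly positive; the residual "pure $b$" subsum ($n_c=n_d=0$, $n_b\geq 1$) is handled directly by $|t|\lesssim(1+|t|)^{1+\beta}$ and $\varepsilon^{n_b-1}\leq 1$, the leading $n_b=1$ contribution simply fixing the value of $C_s$. The linear-in-$M$ term $\varepsilon^2|t|M(t)\leq T_1\varepsilon^{2-\alpha}M(t)\leq T_1 M(t)$ is absorbable on the left. This closes a Picard contraction argument in the closed ball of radius $C_s(1+|t|)^{1+\beta}$ of $\mathcal C([-T,T],L^\infty,H^s)$, giving existence, uniqueness, and the stated bound on $d$ as soon as $T_1$ and $\varepsilon_1$ are taken small enough. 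The main obstacle is precisely this unified power count: it identifies $\alpha=2$ as the critical scaling of the scheme and pins the growth rate of $d$ to exactly $1+\beta = 2 - 1/\alpha$.
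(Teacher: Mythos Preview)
Your proof is correct and follows essentially the same route as the paper: both arguments set up $d$ as a fixed point of $A(d)=\Lambda_\varepsilon^{-1}\bigl(\Phi(d)-(S(b,b)+2S(a,c)+2\varepsilon S(b,c)+\varepsilon^2 S(c,c))\bigr)$, bound the trilinear source $\varepsilon^{-1}(F(u,u,u)-F(a,a,a))$ using continuity of $F$ and the known estimates on $a,b,c$, and close a contraction on the ball of radius $C_s(1+|t|)^{1+\beta}$ under the constraint $1+|t|\leq T_1\varepsilon^{-\alpha}$. The only cosmetic difference is that the paper packages the source bound as $(1+\varepsilon^2|t|+\varepsilon^3\|d\|)^2(1+\varepsilon|t|+\varepsilon^2\|d\|)$ and checks the four resulting inequalities by hand, whereas you expand fully in the multi-index $(n_b,n_c,n_d)$ and run the equivalent power count---the algebra and the smallness conditions on $T_1,\varepsilon_1$ coincide.
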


\begin{proof} If $w$ is the solution of 
$$
\partial_t w + Lw = \frac{F(u,u,u)-F(a,a,a)}{\varepsilon}
$$
with initial datum $0$, then we have that 
$$
d= \Lambda_\varepsilon^{-1} \Big(w- \Big(S(b,b) + 2S(a,c) + 2\varepsilon S(b,c) + \varepsilon^2S(c,c)\Big)\Big)
$$
if $w$, $t$ and $\varepsilon$ are small enough to define $\Lambda_\varepsilon^{-1}$.

Then, $d$ is the solution of the fixed point : 
\begin{eqnarray*}
d (t) =  A(d)(t) & := & \Lambda_\varepsilon^{-1} \Big[ \int_0^t U(t-s)\frac{F(u(s),u(s),u(s)-F(a(s),a(s),a(s))}{\varepsilon})ds - \\
 &   & \Big(S(b,b) + 2S(a,c) + 2\varepsilon S(b,c) + \varepsilon^2S(c,c)\Big)\Big] \; .
\end{eqnarray*}

Let 
$$
\Phi (d) =  \int_0^t U(t-s)\frac{F(u(s),u(s),u(s))-F(a(s),a(s),a(s))}{\varepsilon}ds \; .
$$
As we can do the factorization
$$
F(u,u,u)-F(a,a,a) = F(u-a,u,u) + F(a,u-a,u) - F(a,a,u-a)
$$
we can bound $\frac{F(u,u,u)-F(a,a,a)}{\varepsilon}$ by
$$
C (\|a\|+\varepsilon \|b\| + \varepsilon^2 \|c\| + \varepsilon^3\|d\|)^2(\|b\|+\varepsilon (1+|t|)+ \varepsilon^2 \|d\|)
$$
which gives, with the bounds on $a$, $b$ and $c$,
$$
\|\frac{F(u,u,u)-F(a,a,a)}{\varepsilon}\| \lesssim (1+\varepsilon^2 |t| +\varepsilon^3 \|d\|)^2 (1+\varepsilon |t| +\varepsilon^2 \|d\|)\; .
$$
With $|t|$ less than $T = T_1 \varepsilon^{-\alpha}-1$ and $d$ in the ball of $L^\infty([-|t|,|t|],L^\infty(\Omega,H^s(\T)))$ of radius $C(1+ |t|)^{1+\beta}$, we get that 
\begin{eqnarray*}
\varepsilon^2 |t| & \leq & T_1 \varepsilon^{2-\alpha} \\
\varepsilon^3 \|d\| & \leq &  C  T_1^{1+\beta} \varepsilon^{2(2-\alpha)} \\
\varepsilon |t| & \leq & T_1^{1-\beta} |t|^{\beta}\\
\varepsilon^2 \|d\| & \leq &  C T_1 \varepsilon^{2-\alpha} (1+|t|)^{\beta}
\end{eqnarray*}
hence with $T_1$ small enough, as $\alpha \leq 2$,  $C$ big enough and $\varepsilon \leq 1$, we get
$$
\|\frac{F(u,u,u)-F(a,a,a)}{\varepsilon}\| \leq \frac{C}{4}(1+|t|)^{\beta}
$$
and then integrating over time
$$
\|\phi (d) \| \leq \frac{C}{4}(1+|t|)^{\beta}|t|\; .
$$
Then, we can remark that
$$
\|S(b,b) + 2S(a,c) + 2\varepsilon S(b,c) + \varepsilon^2S(c,c)\| \lesssim 1 + |t| +\varepsilon^2 |t|^2
$$
thus with $C$ big enough, as $\varepsilon^2 |t| \leq T_1 \varepsilon^{2-\alpha}$,
$$
\|S(b,b) + 2S(a,c) + 2\varepsilon S(b,c) + \varepsilon^2S(c,c)\| \leq \frac{C}{4}(1+|t|)^{1+\beta} \; .
$$
Finally, since $\frac{C}{4}(1+|t|)^{1+\beta} \lesssim \varepsilon^{1-2\alpha} \leq \varepsilon^{-4}$ and because $\Lambda_\varepsilon$ is invertible for radius of order $\varepsilon^{-4}$ and times of order $\varepsilon^{-3}$, we can choose $\varepsilon_1$ small enough such that for all $\varepsilon \leq \varepsilon_1$, we can apply the local inversion theorem on $\Lambda_\varepsilon$ and hence have
$$
\|A(d)\|\leq C(1+|t|)^{1+ \beta} \; .
$$

For the contraction, let $d_1$ and $d_2$ be in the previously considered ball and $u_i = a+\varepsilon b + \varepsilon^2 c + \varepsilon^3 d_i$. Since we can apply the local inversion theorem, we have that
$$
\|A(d_1) - A(d_2)\| \leq 2 \|\phi(d_1)  - \phi(d_2)\|\; .
$$
With the same factorization regarding $F$ as previously, we have
$$
\|\phi(d_1)  - \phi(d_2)\| \lesssim \varepsilon^2 |t| (\|u_1\|+\|u_2\|)^2\|d_1-d_2\| \; .
$$
On the ball and for the times we considered, we have $\|u_i\| \lesssim 1$ and $\varepsilon^2 |t| \leq T_1\varepsilon^{2-\alpha}$ hence for $T_1$ small enough, $A$ is a contraction and we can apply the fixed point theorem to get the result.\end{proof}

\begin{remark}The restriction on the norm of the initial datum can be lifted since, replacing $\|u_0\|_{L^\infty,H^s} = 1$ by $\|u_0\|_{L^\infty,H^s} = \mu$ is equivalent to replacing $\varepsilon$ by $\mu \varepsilon$. \end{remark}

\section{Proof of Theorems \ref{th-mainmomtwo},\ref{th-mainmomthree}}\label{sec-prothe}

\subsection{Estimates on the remainder}\label{sub-estrem}

\begin{proposition}The remainder term of the development of the derivative in time of the moments of order 2 of $u$ is bounded, until times $t$ of order $\varepsilon^{-5/3}$ by : 
$$
|R(\varepsilon,n,m,t)| \leq C_s (\min (|n|,|m|))^{-s} (1+|t|)^{7/5}
$$
whereas the remainder term of the development of the derivative in time of the moments of order 3 of $u$ is bounded, up to times of order $\varepsilon^{-2}$ by : 
$$
|R(\varepsilon, n,m,p,t) |\leq C_s (\min(|nm|,|mp|,|pn|))^{-s} (1+|t|) \; .
$$
\end{proposition}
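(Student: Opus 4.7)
The plan is to combine the explicit term-by-term remainder bounds derived in Subsections \ref{sub-momtwo} and \ref{sub-momthr}---which express $R$ as a sum of monomials in $\varepsilon$ multiplied by products of $L^\infty, H^s$ norms of $a, b, c, d$ (respectively $a, b, e$)---with the a priori estimates already established in Section \ref{sec-norfor}: the trivial $\|a\|_{L^\infty,H^s} \leq 1$; the estimates $\|b\|_{L^\infty,H^s} \lesssim 1$ and $\|c\|_{L^\infty,H^s} \lesssim 1+|t|$ coming from the reductions $b = U(t)S(u_0,u_0) - S(a,a)$ and $c = f - 2S(a,b)$; and the contraction bound $\|d(t)\|_{L^\infty,H^s} \leq C_s(1+|t|)^{1+\beta}$ with $\beta = 1 - 1/\alpha$ valid for $|t| \leq T_1\varepsilon^{-\alpha}-1$. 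Once $\alpha$ is fixed, every monomial in the remainder takes the form $\varepsilon^k(1+|t|)^j$, and the time constraint yields $\varepsilon^k(1+|t|)^j \lesssim (1+|t|)^{j - k/\alpha}$. The task thus reduces to selecting $\alpha$ so that the competing families of terms match, and then verifying the target inequality monomial by monomial.

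For the order-$2$ estimate, I take $\alpha = 5/3$, so $\beta = 2/5$ and $\|d\| \lesssim (1+|t|)^{7/5}$. The leading $\varepsilon$-free term $\|a\|^2\|d\|$ saturates the target exponent $7/5$, which forces $\beta \leq 2/5$, and the most dangerous $\varepsilon$-linear term $\varepsilon\|a\|\|c\|^2 \lesssim (1+|t|)^{2 - 1/\alpha}$ also saturates it, which forces $\alpha \leq 5/3$. Combined with $\beta = 1 - 1/\alpha$, these two constraints are saturated simultaneously at the stated values. Every other monomial in the lemma of Subsection \ref{sub-momtwo} is then checked to yield at most $(1+|t|)^{j - 3k/5} \leq (1+|t|)^{7/5}$; for instance $\varepsilon^6\|d\|^3 \lesssim (1+|t|)^{21/5 - 18/5} = (1+|t|)^{3/5}$ and $\varepsilon^3\|c\|^3 \lesssim (1+|t|)^{6/5}$, both comfortably under the target.

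The order-$3$ case is entirely analogous with $\alpha = 2$, $\beta = 1/2$, giving $\|d\| \lesssim (1+|t|)^{3/2}$ and hence $\|e\|_{L^\infty,H^s} \leq \|c\| + \varepsilon\|d\| \lesssim 1+|t|$ on the time interval, since $\varepsilon(1+|t|)^{1/2}$ is bounded there. With this bound on $\|e\|$, the leading monomials $\|a\|^3\|e\|$, $\varepsilon^2\|a\|^2\|e\|^2$, $\varepsilon^4\|a\|\|e\|^3$ and $\varepsilon^6\|e\|^4$ each saturate the target $1+|t|$, while every $b$-containing monomial picks up an additional negative power of $|t|$. The spatial factor comes from the identity $\min(|nm|,|mp|,|pn|)^{-s} = \max(|m|^{-s}|p|^{-s},|p|^{-s}|n|^{-s},|n|^{-s}|m|^{-s})$ already appearing in the bound from Subsection \ref{sub-momthr}. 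The real conceptual difficulty was not in this proposition but in Section \ref{sec-norfor}: without the normal-form reduction, a naive Picard scheme would at best give $\|d\| \lesssim (1+|t|)^2$, capping the time range at $\varepsilon^{-1}$ as in \cite{dSTont} and spoiling the matching of exponents used above.
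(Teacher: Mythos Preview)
Your proposal is correct and follows essentially the same route as the paper: substitute the a priori bounds $\|a\|\lesssim 1$, $\|b\|\lesssim 1$, $\|c\|\lesssim 1+|t|$, and $\|d\|\lesssim (1+|t|)^{1+\beta}$ from Section~\ref{sec-norfor} (with $\alpha=5/3$, $\beta=2/5$ for the order-$2$ remainder and $\alpha=2$, $\beta=1/2$ for the order-$3$ remainder) into the term-by-term inequalities of Subsections~\ref{sub-momtwo} and~\ref{sub-momthr}, then absorb the $\varepsilon$-powers via $\varepsilon^k(1+|t|)^j\lesssim(1+|t|)^{j-k/\alpha}$. Your added explanation of why $\alpha=5/3$ is the balanced choice (the $\|a\|^2\|d\|$ and $\varepsilon\|a\|\|c\|^2$ terms saturate simultaneously) is a nice clarification not spelled out in the paper.
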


\begin{proof}For the remainder in $\partial_t E(e^{i(\omega_m - \omega_n)t}u_n \overline{u_m})$, we use the estimate on $R$ given in the subsection \ref{sub-momtwo} : 
\begin{eqnarray*}
|R(\varepsilon,n,m,t)| & \leq & C \min(|n|,|m|)^{-s} \Big( \|a\|^2_{L^\infty,H^s}\|d\|_{L^\infty,H^s} + \|a\|_{L^\infty,H^s}\|b\|_{L^\infty,H^s}\|c\|_{L^\infty,H^s}+\|b\|_{L^\infty,H^s}^3\\
 & & \varepsilon (\|a\|_{L^\infty,H^s}\|b\|_{L^\infty,H^s}\|d\|_{L^\infty,H^s}+\|a\|_{L^\infty,H^s}\|c\|_{L^\infty,H^s}^2+\|b\|_{L^\infty,H^s}^2\|c\|_{L^\infty,H^s} ) + \\
 & & \varepsilon^2 (\|a\|_{L^\infty,H^s}\|c\|_{L^\infty,H^s}\|d\|_{L^\infty,H^s}+\|b\|_{L^\infty,H^s}^2\|d\|_{L^\infty,H^s}+\|b\|_{L^\infty,H^s}\|c\|_{L^\infty,H^s}^2) + \\
 & & \varepsilon^3 (\|b\|_{L^\infty,H^s}\|c\|_{L^\infty,H^s}\|d\|_{L^\infty,H^s}+\|c\|_{L^\infty,H^s}^3) + \\
 & & \varepsilon^4 (\|b\|_{L^\infty,H^s}\|d\|_{L^\infty,H^s}^2+\|c\|_{L^\infty,H^s}^2\|d\|_{L^\infty,H^s}) + \\
 & & \varepsilon^5 \|c\|_{L^\infty,H^s}\|d\|_{L^\infty,H^s}^2 + \\
 & & \varepsilon^6 \|d\|_{L^\infty,H^s}^3 \Big)\; ,
\end{eqnarray*}
Then, we use the bounds from the previous section with $\alpha = 5/3$ and $\beta = 2/7$, assuming that $\|u\|_{L^\infty,H^s} = 1$ 
$$
\|a(t)\|_{L^\infty,H^s} = 1 \; ,\; \|b(t)\|_{L^\infty,H^s} \leq C_s\; , \; \|c(t)\|_{L^\infty,H^s} \leq C_s(1+|t|) \; , \;  \|d\|_{L^\infty,H^s} \leq C_s (1+|t|)^{7/5} \; .
$$
By inputting these estimates into the bound of $R$, we get : 
\begin{eqnarray*}
|R(\varepsilon,n,m,t)| & \leq & C \min(|n|,|m|)^{-s} \Big( (1+|t|)^{7/5} +\varepsilon ((1+|t|)^2 +\varepsilon^2 (1+|t|)^{12/5}+  \varepsilon^3 (1+|t|)^3 + \\
 & & \varepsilon^4 (1+|t|)^{17/5} +  \varepsilon^5 (1+|t|)^{19/5} + \varepsilon^6 (1+|t|)^{21/5} \Big)\; .
\end{eqnarray*}
Then, we factorize it by $(1+|t|)^{7/5} $ to get
\begin{eqnarray*}
|R(\varepsilon,n,m,t)| & \leq & C \min(|n|,|m|)^{-s}  (1+|t|)^{7/5}\Big( 1 +\varepsilon ((1+|t|)^{3/5} +\varepsilon^2 (1+|t|)+  \varepsilon^3 (1+|t|)^{8/5} + \\
 & & \varepsilon^4 (1+|t|)^2 +  \varepsilon^5 (1+|t|)^{12/5} + \varepsilon^6 (1+|t|)^{14/5} \Big)\; .
\end{eqnarray*}
Finally, we use the bound $(1+|t|) \lesssim \varepsilon^{-5/3}$ to get
\begin{multline*}
1 +\varepsilon ((1+|t|)^{3/5} +\varepsilon^2 (1+|t|)+  \varepsilon^3 (1+|t|)^{8/5} + \varepsilon^4 (1+|t|)^2\\
 +  \varepsilon^5 (1+|t|)^{12/5} + \varepsilon^6 (1+|t|)^{14/5}  \lesssim 1+ 1 + \varepsilon^{1/3} + \varepsilon^{1/3} + \varepsilon^{2/3} + \varepsilon  + \varepsilon^{4/3}\; .
\end{multline*}
In the end, we have that :
$$
|R(\varepsilon,n,m,t)| \leq C_s (\min (|n|,|m|))^{-s} (1+|t|)^{7/5}\; .
$$
The integral in time gives the same estimate as in Theorem \ref{th-mainmomtwo}.

For the remainder in $\partial_t E (e^{-i(\omega_n + \omega_m + \omega_p)t}u_n u_m u_p)$, we use the estimate on $R$ in the subsection \ref{sub-momthr}:
\begin{eqnarray*}
|R(\varepsilon,n,m,p,t)| & \leq & C\max( |m|^{-s}|p|^{-s},|p|^{-s}|n|^{-s},|n|^{-s}|m|^{-s}) \Big(  \\
 & & \Big( \|b\|_{L^\infty,H^s}^2\|a\|_{L^\infty,H^s}^2+\|e\|_{L^\infty,H^s}\|a\|_{L^\infty,H^s}^3\Big) +  \\
 & & \varepsilon \Big( \|b\|_{L^\infty,H^s}^3\|a\|_{L^\infty,H^s}^1+\|e\|_{L^\infty,H^s}\|a\|_{L^\infty,H^s}^2\|b\|_{L^\infty,H^s}\Big)  +  \\
 & & \varepsilon^2 \Big(\|b\|_{L^\infty,H^s}^4+ \|b\|_{L^\infty,H^s}^2\|a\|_{L^\infty,H^s}\|e\|_{L^\infty,H^s}+\|e\|_{L^\infty,H^s}^2\|a\|_{L^\infty,H^s}^2\Big) +  \\
 & & \varepsilon^3 \Big( \|b\|_{L^\infty,H^s}^3\|e\|_{L^\infty,H^s}^1+\|a\|_{L^\infty,H^s}\|e\|_{L^\infty,H^s}^2\|b\|_{L^\infty,H^s}\Big) +  \\
 & & \varepsilon^4 \Big( \|b\|_{L^\infty,H^s}^2\|e\|_{L^\infty,H^s}^2+\|a\|_{L^\infty,H^s}\|e\|_{L^\infty,H^s}^3\Big) +  \\
 & &  \varepsilon^5 \|b\|_{L^\infty,H^s}\|e\|_{L^\infty,H^s}^3  + \\
 & &  \varepsilon^6 \|e\|_{L^\infty,H^s}^4 \Big) \; .
\end{eqnarray*}
We use the bounds of $a$, $b$, and $e$ in this expression. With the bounds of the previous subsection applied with $\alpha =2$ and thus $\beta = 1/2$, we have
$$
\|a\|_{L^\infty,H^s} = 1 \; , \; \|b\|_{L^\infty,H^s} \leq C_s \; ,
$$
$$
\|e\|_{L^\infty,H^s}\leq \|c\|_{L^\infty,H^s} + \varepsilon \|d\|_{L^\infty,H^s} \leq C(1+|t| + \varepsilon (1+|t|)^{3/2}) \leq C (1+|t|)\; .
$$

We use these estimates in the bound of $R$ :
\begin{eqnarray*}
R(\varepsilon,n,m,p,t) & \leq & C\max( |m|^{-s}|p|^{-s},|p|^{-s}|n|^{-s},|n|^{-s}|m|^{-s}) \left( \Big(1+  (1+|t|)\Big) + \right. \\
 & &\left. \varepsilon \Big( 1+(1+|t|)\Big)  +  \varepsilon^2 \Big(1 +(1+|t|)+(1+|t|)^2\Big) +  \varepsilon^3 \Big( (1+|t|)+(1+|t|)^2\Big) + \right. \\
 & &\left. \varepsilon^4 \Big( (1+|t|)^2+(1+|t|)^3\Big) + \varepsilon^5 (1+|t|)^3+ \varepsilon^6 (1+|t|)^4 \right) \; .
\end{eqnarray*}

Again, we use that the estimates on $e$ are valid only until times of order $\varepsilon^{-2}$, so that we can bound $(1+|t|)\varepsilon^2$ by some constant. As well, we use that $\varepsilon$ is less than $1$ and that the bigger the power on $(1+|t|)$ is, the worse the estimates are, to bound all the terms involved in the remainder by $C_s(1+|t|)$. In the end, we have that : 
$$
|R(\varepsilon, n,m,p,t) |\leq C_s (\min(|nm|,|mp|,|pn|))^{-s} (1+|t|) \; .
$$
The integral in time gives the same estimate as in Theorem \ref{th-mainmomthree}.\end{proof}

\subsection{Estimates on the different terms of the formal expansion}\label{sub-estdif}

In this subsection, we estimate $\int_t G_n$ and $\int_t H_{n,m,p}$.

\begin{proposition} The second order in $ \varepsilon$ of the expansion of $E(u_n\overline{u_m})$ is equal to 
$$
F_{n,m}(t) = \delta_n^m \int_{0}^t G_n(t')dt'
$$
and the quantity 
$$
\sum_{n,m} \sqrt{|n_1m_1|} |nm|^{s}|F_{n,m}(t)| 
$$
is uniformly bounded in time.
\end{proposition}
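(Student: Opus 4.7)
The first identity follows by integrating the previous lemma in time. Since the lemma gives $\partial_t E(e^{i(\omega_m-\omega_n)t}u_n\overline{u_m}) = \varepsilon^2\delta_n^m G_n(t) + \varepsilon^4 R(\varepsilon,n,m,t)$, and at $t=0$ we have (by independence and $E(|g_n|^2)=1$) $E(u_n\overline{u_m})(0)=\delta_n^m|\lambda_n|^2$, one concludes that $F_{n,m}(t)=0$ for $n\neq m$ and $F_{n,n}(t)=\int_0^t G_n(t')\,dt'$. The closed form stated in Theorem~\ref{th-mainmomtwo} follows from $\int_0^t \frac{\sin(\Delta s)}{\Delta}ds = \frac{1-\cos(\Delta t)}{\Delta^2}$ applied term by term.

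The plan for the uniform-in-time bound is as follows. Since $F_{n,m}(t)=0$ off the diagonal, the expression to estimate reduces to $\sum_n|n_1|\,|n|^{2s}|F_{n,n}(t)|$. The key input is that the oscillatory factors are dominated in absolute value by a time-independent quantity:
\[
\left|\frac{\cos(\Delta_n^{k,l}t)-1}{(\Delta_n^{k,l})^2}\right|\leq \frac{2}{(\Delta_n^{k,l})^2} \leq \frac{C}{|n_1 k_1 l_1|^2}
\]
thanks to the non-resonance proposition $|\Delta_n^{k,l}|\geq 3|n_1 k_1 l_1|$. This converts the problem into a purely algebraic bound on convolution-type sums.

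After applying this bound, each contribution to $|F_{n,n}(t)|$ is majorized, up to constants and factors like $(E(|g_n|^4)-2E(|g_n|^2)^2)$ which are controlled by the $L^\infty(\Omega)$ assumption on $u_0$, by sums of the form $|n_1|\sum_{k+l=n}\frac{|k_1|+|l_1|+|n_1|}{|n_1 k_1 l_1|^2}|\lambda_\star|^2|\lambda_\star|^2$. I would then multiply by $|n_1|\,|n|^{2s}$, split $|n|^{2s}\leq C_s(|k|^{2s}+|l|^{2s})$, and apply Cauchy--Schwarz in $(k,l)$, reducing the estimates to products of the form $\bigl(\sum_k|\lambda_k|^2|k|^{2s}\bigr)\bigl(\sum_l|\lambda_l|^2|l|^{2s}\bigr)\cdot \sum_k\frac{1}{|k_1|^{\alpha}|k|^{2s}}$ for some $\alpha\in\{0,1,2\}$. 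The first two factors are bounded by $\|u_0\|_{L^\infty,H^s}^4\leq 1$, while the tail series converges thanks to $s>1$ and $k$ being two-dimensional, exactly as in the continuity argument for $S$ in Section~\ref{sub-rewmul}. The diagonal contributions involving $\delta_{k+n=0}$ and $\delta_{n/2\in\N^2}$ consist of a single term per $n$ and are dominated by the same convergent series.

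The main technical obstacle will be the asymmetric combinatorics in the three-term factor $k_1|\lambda_n|^2|\lambda_l|^2 + l_1|\lambda_n|^2|\lambda_k|^2 - n_1|\lambda_k|^2|\lambda_l|^2$: one must track where each power of $|n_1|,|k_1|,|l_1|$ is consumed to ensure that, after division by $|n_1 k_1 l_1|^2$, enough decay remains in both summation variables to apply Cauchy--Schwarz and close the series. Once this bookkeeping is carried out, the bound is uniform in $t$ because the time dependence was entirely absorbed into the harmless factor $|\cos(\Delta t)-1|\leq 2$.
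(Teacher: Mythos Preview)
Your outline is correct and matches the paper's approach: integrate the lemma to get $F_{n,m}=\delta_n^m\int_0^t G_n$, use the non-resonance bound $|\Delta_n^{k,l}|\geq 3|n_1k_1l_1|$ to make the estimate time-independent, multiply by $|n_1|\,|n|^{2s}$, and close via $|n|^{2s}\lesssim |k|^{2s}+|l|^{2s}$. The paper in fact bypasses the Cauchy--Schwarz step and the auxiliary tail series you anticipate by using the elementary pointwise bounds $|k_1l_1|^{-2}\leq 4|n_1|^{-2}$ (one of $|k_1|,|l_1|$ is at least $|n_1|/2$) and $|k_1|\geq 1$, which immediately give $\sum_n |n_1|\,|n|^{2s}|F_{n,n}(t)|\lesssim \bigl(\sum_n|n|^{2s}|\lambda_n|^2\bigr)^2$ and make the ``bookkeeping'' in your last paragraph unnecessary.
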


\begin{proof} The integral of $G_n$ over $t$ is equal to 
\begin{eqnarray*}
\int_{0}^t G_n(t')dt' &= &  - n_1 E(|g_n|^2)^2 \sum_{k +l = n} \frac{\cos(\Delta_n^{k,l}t)-1}{(\Delta_n^{k,l})^2} \Big( k_1 |\lambda_n|^2|\lambda_l|^2 + l_1 |\lambda_n|^2|\lambda_k|^2 - n_1 |\lambda_k|^2|\lambda_l|^2\Big)  \\
  & & -n_1 (E(|g_n|^4)-2 E(|g_n|^2)^2) \Big( 2n_1 \frac{\cos(\Delta_n^{-n,2n}t)-1}{(\Delta_n^{-n,2n})^2} |\lambda_n|^4 - \\
 & & \delta_{n/2 \in \N^2}\frac{n_1}{2} \frac{\cos(\Delta_n^{n/2,n/2}t)-1}{(\Delta_n^{n/2,n/2})^2} |\lambda_{n/2}|^4\Big) \; .
\end{eqnarray*} 
For $n\neq m$, the derivative in time of $e^{i(\omega_m - \omega_n)t} F_{n,m}(t)$ is equal to $0$ and initially $F_{n,m}(0) = 0$, hence at all time $F_{n,m}(t) = 0$. For $n=m$, $\omega_m = \omega_n$, hence we have an explicit formula for the derivative with regard to time of $F_{n,n}$, and besides, $F_{n,n}(t=0) = 0$, therefore
$$
F_{n,m}(t) = \delta_n^m \int_{0}^t G_n(t')dt'\; .
$$
We bound $E(|g_n|^4)$ and $E(|g_n|^2)^2$ by $\|g_n\|_{L^\infty}^4$ and we remark that the terms depending on $E(|g_n|^4)$ can be written like the terms under the sum to get : 
$$
|F_{n,n}(t)| \leq C\|g_n\|_{L^\infty}^4 |n_1| \sum_{k+l = n} (\Delta_n^{k,l})^{-2} \Big( |k_1|\; |\lambda_n|^2|\lambda_l|^2 + |l_1|\; |\lambda_n|^2|\lambda_k|^2 +| n_1|\; |\lambda_k|^2|\lambda_l|^2\Big)\; .
$$
Then, we use the bound on $|\Delta_n^{k,l}|$ to get : 
$$
|F_{n,n}(t)| \leq C\|g_n\|_{L^\infty}^4 |n_1| \sum_{k+l = n} (|k_1l_1n_1|)^{-2} \Big( |k_1|\; |\lambda_n|^2|\lambda_l|^2 + |l_1|\; |\lambda_n|^2|\lambda_k|^2 +| n_1|\; |\lambda_k|^2|\lambda_l|^2\Big)\; .
$$
Separating the sum into two different components, using the symmetry on $k$ and $l$, we have : 
$$
|F_{n,n}(t)| \leq C\|g_n\|_{L^\infty}^4 \left( |n_1|^{-1}|\lambda_n|^2 \sum_{l} |l_1|^{-2}  |\lambda_l|^2 + \sum_{k+l = n}(|k_1l_1|)^{-2}|\lambda_k|^2|\lambda_l|^2\right) \; .
$$
We then sum this quantity over $n$ having previously multiplied it by $|n_1|\;|n|^{2s}$. Since $(|k_1l_1|)^{-2} \leq C|n_1|^{-2}$ and $|n|^{2s} \leq C( |k|^{2s}+|l|^{2s})$ when $k+l=n$ : 
$$
\sum_n |n_1|\;|n|^{2s} |F_{n,n}(t)| \leq C\|g_n\|_{L^\infty}^4 \left( \sum_n |n|^{2s}|\lambda_n|^2 \right)^2 \; .
$$
We then remark that
$$
\frac{\|g_n\|_{L^\infty}^2}{E(|g_n|^2)}E(|g_n|^2) \sum_n |n|^{2s}|\lambda_n|^2  = \frac{\|g_n\|_{L^\infty}^2}{E(|g_n|^2)}\|u_0 \|_{L^2_\Omega ,H^s}^2 \leq \frac{\|g_n\|_{L^\infty}^2}{E(|g_n|^2)}\|u_0\|_{L^\infty,H^s}< \infty
$$
which concludes the proof. \end{proof}

\begin{proposition} The first order in $ \varepsilon$ of the expansion of $E(u_nu_mu_p)$ is equal to 
$$
F_{n,m,p}(t) = e^{i(\omega_n+\omega_m + \omega_p)t}\delta_{n+m+p}^0 \int_{0}^t H_{n,m,p}(t')dt'
$$
and the quantity
$$
\sum_{n,m,p} \sqrt{|n_1m_1p_1|}|nmp|^{s}|F_{n,m,p}(t)| 
$$
is bounded uniformly in time.
\end{proposition}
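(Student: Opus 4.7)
The plan is to proceed in two stages, mirroring the proof given just above for $F_{n,m}$.

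First, I would establish the explicit formula. The previous lemma gives
$$\partial_t\bigl(e^{-i(\omega_n+\omega_m+\omega_p)t}E(u_nu_mu_p)\bigr) = \varepsilon\,\delta_{n+m+p}^{0}\,H_{n,m,p}(t) + \varepsilon^3 R(\varepsilon,n,m,p,t).$$
Since $u_0$ is a sum of independent rotation-invariant Fourier coefficients, $E(u_{0,n}u_{0,m}u_{0,p})=\lambda_n\lambda_m\lambda_p\,E(g_ng_mg_p)$ vanishes (any product of an odd number of $g$'s has zero expectation, as shown in Section~\ref{sub-momtwo}). Integrating from $0$ to $t$ and multiplying back by $e^{i(\omega_n+\omega_m+\omega_p)t}$ yields the formula for $F_{n,m,p}(t)$; performing the time integral of $e^{-i(\omega_n+\omega_m+\omega_p)t'}$ against the $t'$-independent bracket in $H_{n,m,p}$ produces exactly the factor $(1-e^{i(\omega_n+\omega_m+\omega_p)t})/(\omega_n+\omega_m+\omega_p)$ stated in Theorem~\ref{th-mainmomthree}.

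Second, for the uniform bound, I would invoke non-resonance. The proposition on three-wave resonance, applied with $k=n$, $l=m$, $k+l=-p$ and using $\omega_{-p}=-\omega_p$, gives $|\omega_n+\omega_m+\omega_p|\geq 3|n_1m_1p_1|$ whenever $n+m+p=0$ and all first components are nonzero. Therefore
$$\left|\frac{1-e^{i(\omega_n+\omega_m+\omega_p)t}}{\omega_n+\omega_m+\omega_p}\right|\leq \frac{2}{3|n_1m_1p_1|},$$
which is \emph{independent of} $t$ and produces precisely the denominator needed to cancel the weight $\sqrt{|n_1m_1p_1|}$. Combining this with $E(|g_n|^k)\leq \|g_n\|_{L^\infty}^k$ yields
$$|F_{n,m,p}(t)|\lesssim \delta_{n+m+p}^{0}\,\|g_n\|_{L^\infty}^4\,\frac{|n_1||\lambda_m|^2|\lambda_p|^2+|m_1||\lambda_n|^2|\lambda_p|^2+|p_1||\lambda_n|^2|\lambda_m|^2+(\text{diagonal terms})}{|n_1m_1p_1|}.$$

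Third, by the symmetry under permutation of $(n,m,p)$, it suffices to bound one principal contribution, say the one carrying $|n_1||\lambda_m|^2|\lambda_p|^2$. After multiplication by $\sqrt{|n_1m_1p_1|}\,|nmp|^s$ and restriction to $n+m+p=0$, this becomes
$$\sum_{m,p}\frac{\sqrt{|n_1|}}{\sqrt{|m_1p_1|}}\,|n|^s|m|^s|p|^s\,|\lambda_m|^2|\lambda_p|^2, \qquad n=-(m+p).$$
Using $\sqrt{|n_1|}=\sqrt{|m_1+p_1|}\leq \sqrt{|m_1|}+\sqrt{|p_1|}$ and $|m_1|,|p_1|\geq 1$, the ratio $\sqrt{|n_1|}/\sqrt{|m_1p_1|}$ is bounded by a constant; then $|n|^s\leq C_s(|m|^s+|p|^s)$ reduces the estimate to a sum of the shape $\sum_{m,p}|m|^{2s}|p|^s|\lambda_m|^2|\lambda_p|^2$, which is majorized by $\bigl(\sum_{n}|n|^{2s}|\lambda_n|^2\bigr)^2$ since $|p|^s\leq |p|^{2s}$ for $|p|\geq 1$. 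That square is finite by the hypothesis $u_0\in L^\infty(\Omega,H^s)$. The three diagonal contributions (the Kronecker $\delta_m^p$, $\delta_p^n$, $\delta_n^m$ terms) reduce to sums over a single index after eliminating the $\delta$, e.g.\ $n=-2m$ when $\delta_m^p$, and are controlled in the same way using $|n|\leq 2|m|$.

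The main obstacle is purely combinatorial: checking that the weight $\sqrt{|n_1m_1p_1|}$ is exactly absorbed by the non-resonance denominator $|n_1m_1p_1|$, so that the leftover factor $|n_1m_1p_1|^{-1/2}$ can be redistributed against the constraint $n_1=-(m_1+p_1)$ without producing a divergent sum. Once this cancellation is secured, all bounds become independent of time, which is the key to the uniform-in-$t$ conclusion.
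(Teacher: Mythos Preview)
Your proposal is correct and follows essentially the same route as the paper: derive the formula by integrating the lemma on $\partial_t E(e^{-i(\omega_n+\omega_m+\omega_p)t}u_nu_mu_p)$, use the three-wave non-resonance bound $|\omega_n+\omega_m+\omega_p|\geq 3|n_1m_1p_1|$ to obtain a time-independent estimate on $|F_{n,m,p}|$, and then absorb the weight $\sqrt{|n_1m_1p_1|}\,|nmp|^s$ via the constraint $n+m+p=0$. The only cosmetic difference is that the paper applies the single inequality $\sqrt{|n_1|}\,|n|^s\leq C_s(\sqrt{|m_1|}\,|m|^s+\sqrt{|p_1|}\,|p|^s)$ in one step, whereas you split it into $\sqrt{|n_1|}\leq\sqrt{|m_1|}+\sqrt{|p_1|}$ and $|n|^s\leq C_s(|m|^s+|p|^s)$; both lead to the same final bound $\bigl(\sum_n|n|^{2s}|\lambda_n|^2\bigr)^2$.
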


\begin{proof} The first order in $\varepsilon$ of the quantity $\partial_t E(e^{-i(\omega_n + \omega_m + \omega_p)t}u_nu_mu_p)$ is equal to 
$$
\partial_t e^{-i(\omega_n + \omega_m + \omega_p)t} F_{n,m,p}
$$
in the one hand and to
$$
\delta_{n+m+p}^0  H_{n,m,p}(t)
$$
on the other hand. Hence, as $F_{n,m,p}(t=0) = 0$, we get :
$$
F_{n,m,p}(t) = e^{i(\omega_n+\omega_m + \omega_p)t}\delta_{n+m+p}^0 \int_{0}^t H_{n,m,p}(t')dt' \; .
$$
The integral over time of $H$ is given by : 
\begin{eqnarray*}
\int_{0}^t H_{n,m,p}(t')dt' & = & -\frac{e^{-i(\omega_n + \omega_m + \omega_p)t}-1}{\omega_n + \omega_m + \omega_p} \left( E(|g_n|^2) \Big( n_1 |\lambda_m|^2|\lambda_p|^2 + m_1 |\lambda_p|^2|\lambda_n|^2 + p_1 |\lambda_n|^2|\lambda_m|^2 \Big)+ \right. \\
 & & \left. (E(|g_n|^4 - 2 E(|g_n|^2)^2) \Big( \delta_m^p m_1 |\lambda_m|^4 + \delta_p^n p_1 |\lambda_p|^4 + \delta_n^m n_1 |\lambda_n|^4 \Big) \right)\; .
\end{eqnarray*}
As $n+m+p=0$, we have that $\omega_n + \omega_m + \omega_p = \Delta_{-p}^{n,m}$, therefore we get the bound : 
$$
|\omega_n + \omega_m + \omega_p|\geq 3|n_1m_1p_1|
$$
which gives :
$$
|F_{n,m,p}(t)| \leq C \|g_n\|_{L^\infty}^4 \delta_{n+m+p = 0} \Big( \frac{|\lambda_m|^2|\lambda_p|^2}{|m_1p_1|} +\frac{|\lambda_p|^2|\lambda_n|^2}{|p_1n_1|} +\frac{|\lambda_n|^2|\lambda_m|^2}{|n_1m_1|} \Big)\; .
$$
Multiplying this formula by $\sqrt{|n_1m_1p_1|}|nmp|^{s}$ and summing it over $n$, $m$, and $p$ gives, by symmetry over $n$,$m$ and $p$ : 
$$
\sum_{n,m,p} \sqrt{|n_1m_1p_1|}|nmp|^{s}|F_{n,m,p}(t)| \leq 3  \|g_n\|_{L^\infty}^4 \sum_{n+p+m =0} \sqrt{|n_1m_1p_1|}|nmp|^{s} \frac{|\lambda_m|^2|\lambda_p|^2}{|m_1p_1|} \; .
$$
We then use that $\sqrt{|n_1|}|n|^s \leq C_s ( \sqrt{|m_1|}|m|^s + \sqrt{|p_1|}|p|^s)$ when $n+m+p = 0$ and the symmetry over the sum in $m$ and $p$ to get : 
$$
\sum_{n,m,p} \sqrt{|n_1m_1p_1|}|nmp|^{s}|F_{n,m,p}(t)| \leq C_s  \|g_n\|_{L^\infty}^4 \sum_{m,p} |m|^{2s} |\lambda_m|^2 \frac{|p|^s |\lambda_p|^2}{\sqrt{|p_1|}}
$$
and since the sum $\sum |m|^{2s} |\lambda_m|^2$ is finite and $\frac{|p|^s}{\sqrt{|p_1|}}\leq |p|^{2s}$, we have that
$$
\sum_{n,m,p} \sqrt{|n_1m_1p_1|}|nmp|^{s}|F_{n,m,p}(t)| <\infty \; .
$$ \end{proof}

\subsection{Almost conservation of the moments}\label{sub-almcon}
In this subsection, we make further remarks about the result.

First, we introduce the sequence 
$$
u^N = \sum_n g_n \lambda_n^N e^{inz}
$$
where the $g_n$ are $L^\infty$ independent identically distributed random variables, we assume that 
$$
E(|g_n|^2)= 1\mbox{ and } E(|g_n|^4) = 2 \; ,
$$
and we write
$$
\lambda_n^N = \left \lbrace{ \begin{tabular}{ll}
$ \lambda^N > 0 $ & \mbox{ if }$\max(|n_1|,|n_2|) \leq N$ \\
$0$ & \mbox{ otherwise, }\end{tabular}} \right.
$$
where $\lambda^N$ goes to $0$ when $ N$ goes to $\infty$.

We introduce this sequence because if in the term of second order in $\varepsilon$ in the expansion of $E(|u_n(t)|^2)$, the $\lambda_n$ did not depend on $n$ (and that $E(|g_n|^4) = 2E(|g_n|^2)^2$) then this term would be formally null. However, since the initial datum must be in $L^\infty,H^s$ with $s>1$, the $\lambda_n$ can not be constant with regard to $n$ within our framework. 

We denote by $F_n^N(t)$ the term of second order in $\varepsilon$ in the development of $E(|u_n(t)|^2)$.

\begin{proposition} For all time $t$ and all $n$, $F_n^N(t)$ goes to $0$ when $N$ goes to $\infty$. \end{proposition}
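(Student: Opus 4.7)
The plan is to isolate an exact algebraic cancellation in $F_{n,n}(t)$ coming from the constancy of $\lambda_n^N$ on the truncation ball, and then to show that the residual ``boundary'' contribution is dominated by $(\lambda^N)^4$ times a slowly growing factor in $N$.

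First I would simplify the formula for $F_{n,n}(t)$ from Theorem~\ref{th-mainmomtwo} using the moment assumptions $E(|g_n|^2)=1$ and $E(|g_n|^4)=2$: the coefficient $E(|g_n|^4)-2E(|g_n|^2)^2 = 0$ kills the entire ``diagonal'' block, so only
$$F_n^N(t) = -n_1 \sum_{k+l=n} \frac{\cos(\Delta_n^{k,l}t)-1}{(\Delta_n^{k,l})^2}\bigl(k_1 |\lambda_n^N|^2|\lambda_l^N|^2 + l_1 |\lambda_n^N|^2|\lambda_k^N|^2 - n_1 |\lambda_k^N|^2|\lambda_l^N|^2\bigr)$$
remains.

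Next, for fixed $n$ and $N$ large enough that $\max(|n_1|,|n_2|) \le N$, so that $|\lambda_n^N|^2 = (\lambda^N)^2$, I would split the sum according to whether each of $k, l$ lies in the truncation ball $\{m : \max(|m_1|,|m_2|) \le N\}$. When both do, every one of the three $|\lambda|^4$ products equals $(\lambda^N)^4$, and since $k+l=n$ gives $k_1+l_1-n_1 = 0$, the whole bracket vanishes identically; this is the key bulk cancellation. When neither does, every $\lambda$-product is zero. Only the ``boundary'' pairs with exactly one of $k, l$ outside the ball survive, and after the change of variable $(k,l)\leftrightarrow(l,k)$ (using $\Delta_n^{k,l}=\Delta_n^{l,k}$) they collapse to
$$F_n^N(t) = -2 n_1 (\lambda^N)^4 \sum_{\substack{k+l=n\\ \max(|l_1|,|l_2|) \le N < \max(|k_1|,|k_2|)}} \frac{(\cos(\Delta_n^{k,l}t)-1)\, k_1}{(\Delta_n^{k,l})^2}.$$

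Finally I would estimate this residual by combining the non-resonance inequality $|\Delta_n^{k,l}| \ge 3|n_1 k_1 l_1|$ proved earlier in the paper with $|\cos(\cdot)-1|\le 2$: each summand is bounded by $\tfrac{2}{9 n_1^2 |k_1| l_1^2} \le \tfrac{2}{9 n_1^2 l_1^2}$. Dropping the constraint on $k$ as an upper bound, the $l_1$-sum converges because $\sum_{l_1 \ne 0} l_1^{-2} < \infty$, while the $l_2$-range contributes a factor of size $2N+1$. Hence $|F_n^N(t)| \le C N (\lambda^N)^4/|n_1|$. The hypothesis $u_0^N \in L^\infty(\Omega, H^s)$ with $s > 1$ forces $\lambda^N = O(N^{-(s+1)})$, so $N(\lambda^N)^4 = O(N^{-(4s+3)}) \to 0$, and the conclusion follows. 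The main obstacle is spotting the bulk cancellation; once in hand, the bounding step is routine.
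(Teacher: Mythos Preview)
Your argument coincides with the paper's up to and including the bulk cancellation and the reduction to the boundary sum; the gap is in the final estimate. You drop the constraint on the outside index entirely and bound the residual by $CN(\lambda^N)^4/|n_1|$, then close by claiming that ``the hypothesis $u_0^N\in L^\infty(\Omega,H^s)$ with $s>1$ forces $\lambda^N=O(N^{-(s+1)})$''. That hypothesis is not available here: each $u^N$ is a finite Fourier sum, so membership in $L^\infty(\Omega,H^s)$ is automatic and imposes no decay rate on $\lambda^N$. The only assumption in the proposition is $\lambda^N\to 0$, and the remark immediately following it explicitly allows $\lambda^N=(\log N)^{-1/2}$, for which $N(\lambda^N)^4=N/(\log N)^2\to\infty$. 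So your bound does not give the conclusion.

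The paper avoids the spurious factor $N$ by \emph{not} discarding the constraint that $k$ (your outside index) lies outside the ball, and instead splitting according to which coordinate of $k$ is large. If $|k_1|>N$, the factor $|k_1|^{-1}$ in $\dfrac{1}{|n_1|\,|k_1|\,l_1^2}$ contributes $N^{-1}$, which exactly cancels the $O(N)$ coming from the $l_2$-range. If $|k_2|>N$ while $|l_2|\le N$ and $k_2=n_2-l_2$, then $l_2$ is confined to an interval of length at most $|n_2|$, so the $l_2$-sum contributes a factor depending only on $n$, not on $N$. In both cases the boundary sum is $O_n(1)$, hence $|F_n^N(t)|\le C_n(\lambda^N)^4\to 0$ with no rate assumption on $\lambda^N$. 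Replacing your crude count by this two-case split repairs the proof.
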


\begin{remark}Even though the sequence $u^N$ converges towards $0$ in some sense, it is not sufficient to conclude. Indeed, $u^N$ converges toward $0$ in $L^p_\Omega,H^\sigma$, with $\sigma<-1$ but not when $\sigma$ is higher, for instance, one can take any $p$, $\sigma = -1$ and $\lambda^N = (\log N)^{-1/2}$. Besides, $F_n^N$ is controlled by the $L^2,H^s$ norm of the initial datum when $s>1$, but it does not seem to be controlled by the $L^p,H^\sigma$ with $\sigma <-1$, because even though there is some regularization due to the absence of resonances, it only affects the variation on the first space variable $x$.\end{remark}

\begin{proof} We can write $F_n^N(t)$ as : 
$$
F_n^N(t) = - n_1  \sum_{k +l = n} \frac{\cos(\Delta_n^{k,l}t)-1}{(\Delta_n^{k,l})^2} \Big( k_1 |\lambda_n^N|^2|\lambda_l^N|^2 + l_1 |\lambda_n^N|^2|\lambda_k^N|^2 - n_1 |\lambda_k^N|^2|\lambda_l^N|^2\Big)
$$
thanks to the equality on the moments of $g_n$.

We treat the case when $\max(|n_1|,|n_2|)$ is less than $N$ as $N$ goes to $\infty$ and $n$ is fixed. Then, whenever $l$ and $k$ satisfy the same property, we have $\lambda_l^N = \lambda_k^N = \lambda_n^N$ and hence 
$$
\Big( k_1 |\lambda_n^N|^2|\lambda_l^N|^2 + l_1 |\lambda_n^N|^2|\lambda_k^N|^2 - n_1 |\lambda_k^N|^2|\lambda_l^N|^2\Big) =|\lambda^N|^4 (k_1+l_1 -n_1) = 0\; .
$$
By symmetry over $k$ and $l$ we get the following bound for $F_n^N$ : 
$$
|F_n^N(t)| \leq C \sum_{\max(|l_1|,|l_2|) > N, k+l=n} \frac{|n_1|}{|\Delta_n^{k,l}|^2} |l_1| |\lambda_k^N|^2|\lambda_n^N|^2
$$
where several terms of the sum have disappeared since when $\max(|l_1|,|l_2|) > N$, $\lambda_l^N = 0$. If $\max(|k_1|,|k_2|) > N$ then $\lambda_k^N = 0$ so we can erase another round of terms in the sum. We get
$$
|F_n^N(t)| \lcurl \sum_A \frac{|n_1|}{|\Delta_n^{k,l}|^2} |l_1| |\lambda_k^N|^2|\lambda_n^N|^2
$$
where $A$ is the set of $(k,l)$ such that $\max(|k_1|,|k_2|) \leq N$, $\max (|l_1|,|l_2|) >N$ and $k+l= n$.

Then, we divide the sum between the cases $|l_1| > N $ and $|l_2|> N$ and use the bound
$$
|\Delta_n^{k,l}|^2 \geq 9 |n_1 k_1 l_1|^2 \; .
$$
We get : 
$$
|F_n^N(t)| \leq C \frac{(\lambda^N)^4}{|n_1|}\left(  \sum_{A_2}\frac{1}{|l_1|k_1^2} + \sum_{A_1}\frac{1}{|l_1|k_1^2}\right) 
$$
where 
$$
A_2 = \lbrace (k,l) \; |\; |l_2|> N, |k_1|\leq N,|k_2|\leq N, l+k=n \rbrace
$$
and
$$
A_1 = \lbrace (k,l) \; |\; |l_1|> N, |k_1|\leq N,|k_2|\leq N, l+k=n \rbrace \; .
$$
For the first sum, we have only at most $|n_2|$ choices for $k_2$. Indeed, if $n_2 \geq 0$ then we have $-N \leq k_2 \leq N$ and as $k_2 = n_2-l_2$ and $|l_2|> N$, then $k_2 > n_2+ N \geq N$ or $k_2 < n_2-N$, which can be combined as $ -N \leq k_2 < n_2 -N$ with a similar result in the case $n_2 \leq 0$ ($N+n_2 \leq k_2 \leq N$). For the second sum we use the bound on $|l_1|$ and the fact that we have at most $2N$ choices for $ k_2$. For both, we use that $\sum k_1^{-2}$ is finite. This gives : 
$$
|F_n^N(t)| \leq C \frac{(\lambda^N)^4}{|n_1|}\left( |n_2| \sum_{k_1 }\frac{1}{k_1^2} + \sum_{k_1 }\frac{1}{k_1^2}\right) \leq C_n (\lambda^N)^4 \; ,
$$
which concludes the proof, as if $n$ is fixed, above a certain rank $N \geq |n|$ and $\lambda^N$ goes to $0$.
\end{proof}

\begin{remark}The same result is true if we replace $F_n^N(t)$ by the term of first order in $\varepsilon$ in the development of $E(u_nu_mu_p)$ without assuming that $\lambda^N$ goes to $0$. \end{remark}

\begin{remark}The fact that $F_n(t)$ is formally equal to $0$ when the $\lambda_n$ are constant with regard to $n$ and $E(|g_n|^4) = 2 E(|g_n|^2)$ results from the fact that the measure $\mu$ induced by
$$
\sum_n g_n e^{inz}
$$
where $g_n$ are complex centred Gaussian variables is formally invariant by the flow of KP-II. Indeed, its "finite-dimensional" version should look like
$$
d\mu(u)  = Ce^{-c \|u\|_{L^2}^2} dL(u)
$$
with $L$ the Lebesgue measure. As the $L^2$ norm of the solution is an invariant of KP-II and KP-II is a Hamiltonian equation, this is formally invariant, the main problem in passing to effective invariance is that KP-II is not globally well-posed on the support of $\mu$, which is $\cap_{\sigma < -1}H^\sigma$.

Nevertheless, if we wish to develop the moments of higher order than $2$ or $3$ of the solution in order to get a better idea of the evolution of the law of the solution, it seems that we should get a development of this form : 
$$
\partial_t E\left( \prod_{i=1}^{2p} u_{n_i} \right) = \varepsilon^2 \delta_{\sum_i n_i}^0 I_{2p}((|\lambda_k|^2)_k,(E(|g_n|^{2j}))_{j=1,\hdots,p+1}) + \varepsilon^4 R_{2p}
$$
that is, null terms of order 1 and 3, the term of order 2 null if the sum $\sum_i n_i$ is different from zero and that depends only on the sequence $(|\lambda_n|^2)_n$ and the $p+1$ first even moments of the law of $g$, and besides
$$
\partial_t E\left( \prod_{i=1}^{2p+1} u_{n_i} \right) = \varepsilon \delta_{\sum_i n_i}^0 I_{2p+1}((|\lambda_k|^2)_k,(E(|g_n|^{2j}))_{j=1,\hdots,p+1}) + \varepsilon^3 R_{2p+1}
$$
and as long as we do not take care of the remainders $R$, the $I$ should be null when their arguments coincide with these of the formally invariant measure, that is when $|\lambda_n|$ does not depend on $n$ and the first $2p+2$ moments of $g_n$ are equal to a complex Gaussian first $2p+2$ moments.
\end{remark}

\paragraph{Acknowledgements}

The author would like to thank Nikolay Tzvetkov for valuable discussions on the subject.

\bibliographystyle{amsplain}
\bibliography{bibnormal} 
\nocite{*}

\end{document}